\documentclass{article} % For LaTeX2e
\usepackage{iclr2026_conference,times}

\usepackage[utf8]{inputenc} % allow utf-8 input
\usepackage[T1]{fontenc}    % use 8-bit T1 fonts
\usepackage{hyperref}       % hyperlinks
\usepackage{url}            % simple URL typesetting
\usepackage{booktabs}       % professional-quality tables
\usepackage{amsfonts}       % blackboard math symbols
\usepackage{nicefrac}       % compact symbols for 1/2, etc.
\usepackage{microtype}      % microtypography
\usepackage{xcolor}         % colors
\usepackage{wrapfig}
\usepackage{times}

\usepackage{tabularx}
\usepackage{array}

\usepackage{microtype}
\usepackage{graphicx}
\usepackage{subfigure}
\usepackage{booktabs} % for professional tables

\RequirePackage{fancyhdr}
\RequirePackage{xcolor} % changed from color to xcolor (2021/11/24)
\RequirePackage{algorithm}
\RequirePackage{algorithmic}
\RequirePackage{natbib}
\RequirePackage{eso-pic} % used by \AddToShipoutPicture
\RequirePackage{forloop}
\RequirePackage{url}

\usepackage{amsmath}
\usepackage{amssymb}
\usepackage{mathtools}
\usepackage{amsthm}

\usepackage[capitalize,noabbrev]{cleveref}

\usepackage[most]{tcolorbox}
\usepackage{amsmath,amsthm}
\tcbuselibrary{skins}

\usepackage{amsmath,amsfonts,bm}

\def\eqref#1{equation~\ref{#1}}
\def\1{\bm{1}}

\DeclareMathAlphabet{\mathsfit}{\encodingdefault}{\sfdefault}{m}{sl}
\SetMathAlphabet{\mathsfit}{bold}{\encodingdefault}{\sfdefault}{bx}{n}

\usepackage{hyperref}
\usepackage{url}

\newtcolorbox{shadowboxtext}{
  colback=gray!10,       
  boxrule=1pt,           
  frame hidden,          
  arc=0pt,               
  drop shadow,           
  left=5pt,              
  right=5pt,
  top=5pt,
  bottom=5pt,
}

\newtheorem{theorem}{Theorem}[section]
\usepackage{graphicx}

\theoremstyle{plain}

\newtheorem{lemma}[theorem]{Lemma}
\newtheorem{corollary}[theorem]{Corollary}
\newtheorem{definition}[theorem]{Definition}
\newtheorem{assumption}[theorem]{Assumption}

\newtheorem{example}[theorem]{Example}

\title{Last-iterate Convergence of ADMM on Multi-affine Quadratic Equality Constrained Problem}

\author{Yutong Chao \\
Department of Computer Science\\
Technical University of Munich\\
\texttt{yutong.chao@tum.de} \\
\And
Michal Ciebielski \\
Munich Institute of Robotics and Machine Intelligence \\
Technical University of Munich \\
\texttt{michal.ciebielski@tum.de} \\
\And
Jalal Etesami \\
Department of Computer Science\\
Technical University of Munich\\
\texttt{j.etesami@tum.de} \\
\And
Majid Khadiv \\
Munich Institute of Robotics and Machine Intelligence \\
Technical University of Munich \\
\texttt{majid.khadiv@tum.de} \\
}

\iclrfinalcopy % Uncomment for camera-ready version, but NOT for submission.
\begin{document}

\maketitle

\begin{abstract}
In this paper, we study a class of non-convex optimization problems known as multi-affine quadratic equality constrained problems, which appear in various applications--from generating feasible force trajectories in robotic locomotion and manipulation to training neural networks. Although these problems are generally non-convex, they exhibit convexity or related properties when all variables except one are fixed. Under mild assumptions, we prove that the alternating direction method of multipliers (ADMM) converges when applied to this class of problems. Furthermore, when the "degree" of non-convexity in the constraints remains within certain bounds, we show that ADMM achieves a linear convergence rate. We validate our theoretical results through practical examples in robotic locomotion.
\end{abstract}

\section{Introduction}

Non-convex optimization serves as a fundamental concept in modern machine learning, such as reinforcement learning \cite{xu2021crpo,wang2024theoretical} and large language models \cite{ling2024convergence,kou2024cllms}. 
The non-convexity in these applications may arise from the objective function, the constraint set, or both. 
Finding a solution to a non-convex problem is, in general, NP-hard \cite{krentel1986complexity}. 
As a step to manage this complexity, a common practice is to study problems with additional structural assumptions under which particular solvers, such as gradient-based methods, are guaranteed to converge to an optimizer. 
Subsequently, various relaxations of the objective and/or constraints have been proposed to transform the original problem into a more tractable problem.  
For instance, the objective function has been studied under assumptions such as weak strong convexity \cite{liu2014asynchronous}, restricted secant inequality \cite{zhang2013gradient}, error bound \cite{cannelli2020asynchronous}, and quadratic growth \cite{rebjock2024fast}. 
On the other hand, optimization problems with various types of non-linear constraints have been investigated, such as quadratically constrained quadratic programs (QCQP) \cite{bao2011semidefinite,elloumi2019global}, geometric programming (GP) \cite{boyd2007tutorial,xu2014global}, mixed-integer nonlinear programming (MINLP) \cite{lee2011mixed,sahinidis2019mixed}, and equilibrium constraints problem \cite{yuan2016binary,su2023optimality}.

Recently, there has been growing interest in analyzing non-convex optimization problems with specific block structures, driven by their broad range of applications. 
Although such problems are generally non-convex, they often exhibit convexity or related properties when all but one block of variables is fixed. 
Various structural properties of these problems have been studied, including multi-convexity in minimization settings  \cite{xu2013block,shen2017disciplined,lyu2024stochastic}, PL-strongly concave \cite{guo2023fast}, and PL-PL \cite{daskalakis2020independent,chen2022faster} in min-max formulations. 
Motivated by two well-known applications in robotics, in this work, we study multi-affine equality-constrained optimization problems (see Problem in \eqref{problem}).

\begin{figure}
    \centering
    \subfigure{\includegraphics[width=0.7\textwidth, height=0.2\textwidth]{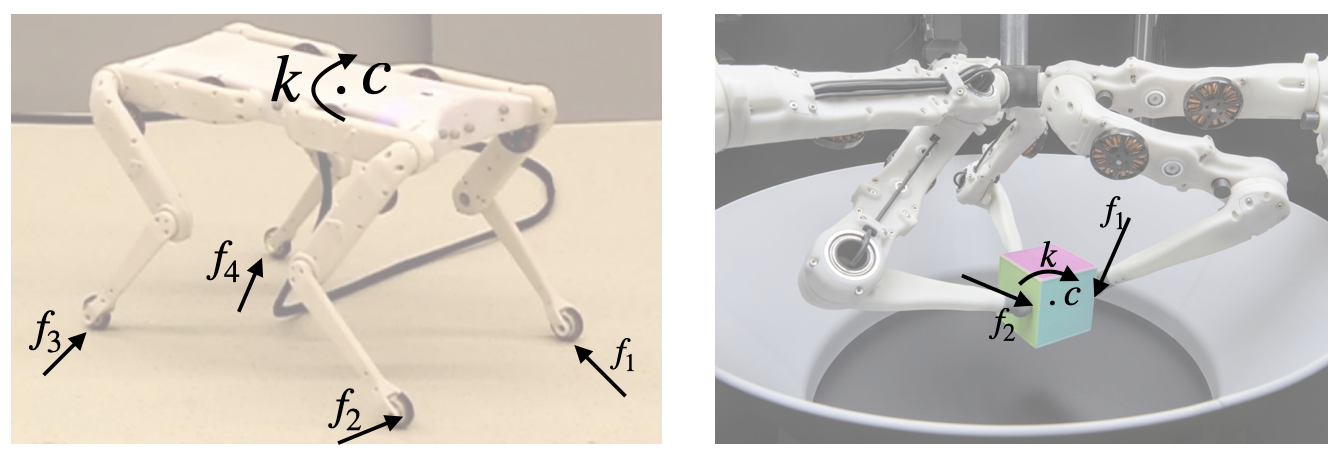}}
    \vspace{-1mm}
    \caption{Examples of locomotion and manipulation settings, (left) Solo \cite{grimminger2020open} and (right) Trifinger \cite{wuthrich2020trifinger}}
    \label{fig:loco-manipulation}
\end{figure}

In particular, locomotion and manipulation problems in robotics (Figure \ref{fig:loco-manipulation}) involve intermittent contact interactions with the world. Due to the hybrid nature of these interactions, generating dynamically-consistent trajectories for such systems leads to a set of non-convex problems, which remains an open challenge. 
In general, the problem of planning through contact is handled in two ways; contact-implicit and contact-explicit. 
The first approach directly incorporates the complementarity constraints arising from the contacts, either by relaxing them within the problem formulation \cite{tassa2012synthesis} or at the solver level \cite{posa2014direct}.
While this approach has recently shown considerable promise in practice \cite{kim2023contact,aydinoglu2024consensus,le2024fast}, providing convergence guarantees remains an open problem due to the presence of multiple sources of non-convexity.
The second approach handles contact in the trajectory optimization problem by casting it as a mixed-integer optimization \cite{deits2014footstep,toussaint2018differentiable}.
In this approach, the hybrid nature of interaction is explicitly taken into account with integer variables, and thus, a combinatorial search is required to decide over the integer decision variables, while the continuous trajectory optimization problem ensures the kinematic and dynamic feasibility of the problem. This approach has also shown great success in recent years in both locomotion \cite{ponton2021efficient,taouil2024non,aceituno2017simultaneous} and contact-rich manipulation tasks \cite{hogan2020reactive,toussaint2022sequence,zhu2023efficient}. 

The optimization problem in the contact-explicit setting exhibits additional interesting structure. In particular, the dynamics can be decomposed into underactuated and actuated components \cite{wieber2006holonomy}. 
This implies that, to generate a feasible force trajectory, the kinematics can be abstracted away. Assuming the robot can produce any desired contact force, it is sufficient to consider only the Newton-Euler dynamics to generate dynamically consistent trajectories for the robot’s center of mass (CoM) in locomotion and for the object’s CoM in manipulation. 
Interestingly, in this setting, the non-convexity in the dynamics has a special form, namely it is multi-affine \cite{herzog2016structured}. This renders the trajectory optimization problem a multi-affine equality-constrained optimization problem. 
These types of problems also appear in other applications such as matrix factorization \cite{luo2020adversarial,choquette2023amplified}, graph theory, and neural network training process \cite{taylor2016training,zeng2021admm}.

Recent work has exploited this structure to solve the problem using methods such as block coordinate descent \cite{shah2021rapid} and the alternating direction method of multipliers (ADMM) \cite{meduri2023biconmp}. 
In constrained optimization problems with linearly separable constraints, ADMM is an efficient and reliable algorithm \cite{lin2015sublinear,deng2017parallel,yashtini2021multi}. 
Notably, its variants  are driving the success of many machine learning applications involving optimization problems with linear constraints and convex objectives \cite{shi2014linear,nishihara2015general,khatana2022dc} as well as those with non-convex objectives \cite{boct2020proximal,kong2024global,wang2019global,li2024riemannian,yuan2024admm}. However, not all of these works provide convergence guarantees, and those that do either rely on additional assumptions or establish weaker forms of convergence. 
For instance,  \cite{boct2020proximal} considered the problem $\min f(x)+\phi(z)$ subject to $Ax=z$, where $\phi$ is proper and lower semicontinuous (LSC), $h$ is differentiable and $L$-smooth. They proved last-iterate convergence to a KKT point under the KL property (see Definition \ref{def:kl}) and assuming access to a proximal solver, i.e., $\arg\min_z\{\phi(z)+\langle y^k, Ax^k-z\rangle+\frac{r}{2}\|Ax^k-z\|^2+\frac{1}{2}\|z-z^k\|^2\}.$ A similar assumption is made by \cite{yuan2024admm}. However, this requirement is quite restrictive when $g$ is nonconvex and the constraints are nonlinear. In fact, their guarantees no longer hold when the constraint set includes nonlinear relations, such as $\{x_1x_2+x_3+z=0\}$.
The next table presents a comparison of existing ADMM approaches. 

\begin{table}[h!]
\centering
\small
\caption{Comparison with previous work}
\begin{tabularx}{\columnwidth}{|p{1.8cm}|p{.8cm}|X|X|p{1.3cm}|p{1.8cm}|}
\hline
Ref. & Blocks & Objective & Assumptions & Constraints & Convergence \\ \hline
\cite{boct2020proximal} & =2 & \small{$f(x)+\phi(z)$} & $f$: smooth, $\phi$: LSC & Linear & Last iterate { assuming $\alpha$-PL} \\ \hline
\cite{wang2019global} & $\geq 2$ & \tiny{$f(x)\!+\!\sum_{i=1}^n I_i(x_i)+\phi(z)$} & $f$: smooth, $\phi$: smooth {$f+\phi:$ coercive}& Linear & Avg. iterate \\ \hline
\cite{li2024riemannian} & =2 & \small{$f(x)+\phi(z)$} & $f$: smooth, $\phi$: convex, nonsmooth & Convex, bounded & Best iterate \\ \hline
\cite{yuan2024admm} & $\geq 2$ & \tiny{$\sum_{i=1}^n f_i(x_i)+\textcolor{blue}{h}_i(x_i)$} & $f_i$: smooth, bounded derivative, $h_i:$ nonsmooth, { $h_n:$ convex} & Linear & Avg. iterate \\ \hline
Our Work & $\geq 2$ & \tiny{$f(x)+\sum_{i=1}^n I_i(x_i)+\phi(z)$} & $f$: {strongly convex, }  smooth, $\phi$: {strongly convex, } smooth & Multi-affine, unbounded & Last iterate \\ \hline
\end{tabularx}
\vspace{-.5cm}
\end{table}

Naturally, it is important to understand the limitations of ADMM in settings with nonlinear constraints, such as multi-affine equality-constrained \cite{wang2019global,zhang2023linear,barber2024convergence}. 
\cite{el2025convergence} studied general nonlinear equality-constrained nonconvex optimization but imposed additional structural assumptions to ensure regularity, such as full column rank of the constraint Jacobian. They also relied on backtracking strategies to solve each subproblem. Furthermore, they established convergence rates under the assumption that the objective satisfies the $\alpha$-PL property.  \cite{li2025convergent} focused on proximal ADMM methods for general nonlinear dynamics constraints. However, they only provide the best-iterate convergence between two iterates and fail to provide convergence rate related to the stationary point.
In problems with multi-affine constraints, \cite{gao2020admm} investigated the convergence properties of ADMM under the strict set of assumptions for the objective function (it has to be independent of certain variables). 
However, they fail to show any explicit convergence rate, which is crucial for applications like trajectory planning in robotics, where solutions must be computed within a short time slot with predefined accuracy. This  leads to a key question that we seek to address in this study. 
\vspace{-.1cm}
\begin{shadowboxtext}
\textit{What convergence rate can be guaranteed for ADMM when applied to optimization problems with multi-affine quadratic equality constraints?}
\end{shadowboxtext}
\vspace{-.1cm}
Previous works have shown that a linear convergence rate is achievable in linearly constrained problems with a strongly convex objective \cite{lin2015global,cai2017convergence,lin2018global}.
On the other hand, it is straightforward to see that the multi-affine quadratic constraints gradually reduce to linear constraints as the non-convex coefficients vanish (i.e., $\{C_i\}\to \mathbf{0}$ in \eqref{eq:constraint}).  Thus, in the extreme case when all the non-convex coefficients are zero, linear convergence is ensured. 
However, our empirical results indicate that a linear convergence rate remains attainable even when the nonlinear quadratic components are present. This observation motivates our next research question.  
\vspace{-.1cm}
\begin{shadowboxtext}
\textit{If the effect of non-convexity in the constraint is small enough, does the linear convergence of ADMM when applied to the problem in \ref{problem} still hold?}
\end{shadowboxtext}
\vspace{-.1cm}
In this paper, we provide positive answers to both of the questions raised above. More precisely, when the norm of the non-convex coefficients (i.e., $\{\|C_i\|\}$) is sufficiently small relative to the norms of the linear components in the constraint set, ADMM achieves a linear convergence rate.
Otherwise, under certain mild assumptions, we show that the convergence rate is sub-linear. In addition, we validate our theoretical findings through several practical experiments in robotic applications.

\vspace{-.5cm}

\section{Problem Setting }\label{sec:math}

\vspace{-.2cm}
\textbf{Notations:}
Throughout this work, we denote $\|B\|$ and $\|a\|$ as the spectral norm of matrix $B$ and the Euclidean norm of vector $a$, respectively and denote the smallest eigenvalue of $B$ by $\lambda_{min}(B)$. 
We use $x_i$ and $x_{-i}$ to denote the $i$-th entry of the vector $x$ and all entries except the $i$-th entry, respectively. 
We denote $[x_i,x_{i+1},\cdots,x_j]$ by $x_{i:j}$ when $j\geq i$ and the empty set when $j<i$.  
The partial derivative of function $f(x)$ with respect to the variables in its $i$-th block is denoted as $\nabla_{i}f(x):=\frac{\partial}{\partial x_i}f(x_i,x_{-i})$ and the full gradient is denoted as $\nabla f(x)$. The general sub-gradient of $f$ at $x$ is denoted by $\partial f(x)$. 
The exterior product between vectors $a$ and $b$ is $a\times b$. The set of all functions that are $n$-th order differentiable is $C^n$. The ball centered at $x$ with radius $r$ is $\mathcal{B}(x;r)$. The distance between a point $x$ and a closed set $\mathbb{S}$ is given by $dist(x, \mathbb{S}):=\inf_{s\in\mathbb S}\|s-x\|$. $\lambda_{min}(A)$ and $\lambda_{min}^+(A)$ denote the minimum eigenvalue and the minimum positive eigenvalue of $A$, respectively.

\textbf{Definitions and {assumptions}:}
In this work, we consider the following \textit{multi-affine quadratic equality constrained} problem:
\begin{equation}\label{problem}
\min_{x,z} F(x)+\phi(z),\quad 
\textrm{s.t.} \quad A(x)+Qz=0,
\end{equation}
 where $x=(x_1,\cdots,x_n)^T\in\mathbb{R}^{n_x}$ is partitioned into $n$
 blocks, with each block $x_i\in\mathbb{R}^{n_i}$. 
 $Q$ is a matrix in $\mathbb{R}^{n_c\times n_z}$, and $z$ is a vector in $\mathbb{R}^{n_z}$. 
 Function $A(x)$ is a multi-affine quadratic operator.
 \begin{shadowboxtext}
\begin{definition}\label{def_biaffine}
    Function $A(\cdot):\mathbb{R}^{n_x}\to\mathbb{R}^{n_c}$ is called  a multi-affine quadratic operator when for each $i\in\{1,...,n_c\}$,  there exist $C_i\in\mathbb R^{n_x\times n_x}$, $d_i\in\mathbb R^{n_x}$, and $e_i\in\mathbb R$ such {that} %its $i$-th coordinate is
    \begin{equation}\label{eq:constraint}
    (A(x))_i:=\frac{x^TC_ix}{2}+d_i^Tx+e_i,    
    \end{equation}
    {and moreover}, $A(x_j;x_{-j})$ is an affine function for $x_j$ when $x_{-j}$ are fixed, $\forall x_{-j},\ j\in [n]$.
    %Moreover, $A(x_j;x_{-j})$ is an affine function with respect to $x_j$ when $x_{-j}$ is fixed, $\forall$ $j$.
\end{definition}
\end{shadowboxtext}

Note that the set of constraints in \eqref{problem} comprises the linear ones and encompasses a much broader class in nonlinear settings. 
It also appears in various applications such as the locomotion and manipulation problems in robotics, %\citet{zhou2020accelerated,aydinoglu2022real,fu2023deep}, 
matrix factorization,
%\citet{luo2020adversarial,choquette2023amplified}, graph theory, 
and neural network training process. %\cite{taylor2016training,zeng2021admm}. 
{From the definition, it is obvious that the diagonal blocks of the matrix $C_i$ is zero matrix.}
We provide a simple example of the multi-affine quadratic equality constrained problem.
\begin{example}
    Consider the following problem
    \begin{equation*}
    \begin{aligned}
        \min_{x, z} x_1^2+x_2^2+z_1^2+z_2^2,\quad  \textrm{s.t. } \quad x_1x_2+x_1+1+z_1=0, \quad
        -x_1x_2+x_2+1+z_2=0.
    \end{aligned}
    \end{equation*}
    This problem can be reformulated in the form of \eqref{problem} by considering $Q$ to be the identity matrix, $F(x):=x_1^2+x_2^2$, $\phi(z):=z_1^2+z_2^2$, and 
    \begin{small}
    \begin{align*}
    &C_1=\begin{bmatrix}
        0 &1\\
        1 &0
    \end{bmatrix}, \quad
    C_2=\begin{bmatrix}
        0 &-1\\
        -1 &0
    \end{bmatrix},\quad
    d_1=\begin{bmatrix}
        1\\
        0
    \end{bmatrix},\quad
    d_2=\begin{bmatrix}
        0\\
        1
    \end{bmatrix}, \quad
    e_1=1, \quad    
    e_2=1.
    \end{align*}
    \end{small}
\end{example}
\vspace{-.4cm}
The next set of assumptions are made to restrict the objective function in \eqref{problem}. 
Namely, we assume that the function $F(x)$ can be decomposed into a $C^2$ strongly convex function and a group of indicator functions that are block-separable. 
\begin{assumption}\label{ass_strongly_convex}
    $F(x)$ is subanalytic (see Appendix \ref{app:defs} for formal definition) and can be written as $f(x)+\sum_{i=1}^n\!I_i(x_i)$, where $f(x)$ is $C^2$, $\mu_f$-strongly convex with $x_f^\star$ denoting its minimizer and $I_i(\cdot)$ is the indicator function of a convex and closed set $X_i\!\subseteq\!\mathbb R^{n_i}$. 
    Function $\phi(z)$ is also $C^2$, $\mu_z$-strongly convex with its minimizer at $\phi_z^\star$. 
\end{assumption}

 We refer to the indicators as block-separable because they take the form $\sum_{i}I_i(x_i)$ instead of $I(x_1,...,x_n)$. The key distinction is that, in the block-separable case, each block of $x$ must belong to a specific convex and closed set. 
Note that this is not a restrictive assumption for a wide range of problems in robotics, optimal control, and related areas, as the objective functions in these applications typically represent quadratic costs, often combined with indicator functions to enforce safe regimes for the control variables $x$.
 Moreover, separable non-smooth functions have been frequently assumed in numerous works such as \cite{lin2016iteration,deng2017parallel,yang2022survey}. %We also introduce the notion of smoothness.

\begin{definition}
    Function $g(\cdot):\mathbb{R}^{m}\to\mathbb{R}$ is called $L$-smooth when there exists $L>0$ such that
    \begin{equation*}
    \begin{aligned}
        &\|\nabla g(x)-\nabla g(x')\|\leq L\|x-x'\|,\ \forall x, x'\in\mathbb{R}^{m}.\\
    \end{aligned}
    \end{equation*}
\end{definition}
Note that the indicator functions $\{I_i(\cdot)\}$ may not be smooth. 
\begin{definition}\label{def:kl}
Function $g(\cdot):\mathbb{R}^m \rightarrow\! \mathbb{R} \cup\{\infty\}$ is said to have the $\alpha$-PL property, where $\alpha\in(1,2]$, if there exist $\eta \in(0, \infty]$ and $C>0$, such that for all $x$ with $|g(x)-g(x^\star)|\leq\eta$, where $x^*$ is a point for which $\mathbf{0}\in \partial g(x^\star)$, we have
\vspace{-.2cm}
\begin{equation*}
\big(\mathrm{dist}(\partial g(x),0)\big)^\alpha\geq C|g(x)-g(x^\star)|.
\end{equation*}
\vspace{-.7cm}
\end{definition}
 It is important to note that when a function is subanalytic {and lower semi-continuous} then there exists an $\alpha$ such that it has also the $\alpha$-PL property, which is well-known in the non-convex optimization literature \cite{frankel2015splitting,bento2024convergence}. 
Furthermore, it has been shown by \cite{fatkhullin2022sharp,li2023convergence} that under some mild conditions, the $\alpha$-PL property guarantees that the iterates of gradient-based algorithms such as gradient descent (GD) or stochastic GD converge to the optimizer with an explicit convergence rate. 
In the next section, by showing the $\alpha$-PL property for different scenarios, we could establish the convergence rate of the ADMM.  

\begin{assumption}\label{ass_A_inside_Q}
    Matrix $Q\in\mathbb{R}^{n_c\times n_z}$ is full row rank.
\end{assumption}
%\textcolor{red}{delete} This assumption can be slightly relaxed by requiring $Q$ to be a matrix with full column rank, 
This assumption is weaker than the one made in \cite{nishihara2015general,deng2017parallel}, where $Q$ is required to have full column rank.
%This is because, when $Q$ is not invertible but its columns are full rank, the biaffine constraint in \eqref{problem} can always be rewritten as $Q^TA(x)+Q^TQz=0$, in which $Q^TQ$ is an invertible matrix due to injectivity of $Q$. 
%Moreover, $Q^TA(x)$ remains a multi-affine quadratic operator. 
%Hence, for simplicity of presentation, we assume that the matrix $Q$ is invertible, rather than merely having full column rank. 
The full column rank can be replaced by a non-singularity assumption as one can reform the constraints to $Q^TA(x)+Q^TQz=0$ in which $Q^TQ$ is non-singular when $Q$ is full column rank. However, this reformulation is not possible when $Q$ is full row rank. 
Assumption \ref{ass_A_inside_Q} is crucial for establishing the convergence of ADMM, as demonstrated by the example below, where its violation leads to failure of the algorithm.

\textbf{Algorithm:}
To solve \eqref{problem}, we consider the augmented Lagrangian ADMM, introducing a dual variable $w\in\mathbb R^{n_{\textcolor{blue}{c}}}$ and a quadratic penalty term for the constraints with the coefficient $\rho$. This results in the following Lagrangian function. 
\begin{definition}\label{def_augmented_L}
    The corresponding augmented Lagrangian of \eqref{problem} is given by
    \begin{equation}
    \begin{aligned}
        L(x,z,w):=F(x)+\phi(z)+\langle w, A(x)+Qz\rangle+\frac{\rho}{2}\|A(x)+Qz\|^2,
    \end{aligned}
    \end{equation}
    where $\rho>0$ is the penalty parameter. 
\end{definition}
% \textcolor{red}{maybe delete this}
% A point $(x^\star,z^\star,w^\star)$ is called a constrained stationary point of the above Lagrangian function when 
% \begin{equation*}
%     \mathbf{0}\in\partial L(x^\star,z^\star,w^\star).
% \end{equation*}
% It is straightforward to see that all stationary points are also feasible points of the multi-affine quadratic constraints, i.e., 
% $\partial L/\partial w=A(x^\star)+Qz^\star=0$.
% Thus, the coefficient $\rho$ does not affect the set of stationary points and the value of the augmented Lagrangian at the stationary points. 

\begin{wrapfigure}{r}{0.54\textwidth}
\vspace{-.55cm}
\begin{minipage}{0.54\textwidth}
\begin{algorithm}[H]
\caption{ADMM}\label{alg:admm}
\begin{algorithmic}
\REQUIRE $(x_1^0, \ldots, x_n^0), z^0, w^0, \rho$
\FOR{$k = 0, 1, 2, \ldots$}
    \FOR{$i = 1, \ldots, n$}
        \STATE $x_i^{k+1}\! \in \arg\min_{x_i}\!\! L(x_{1:i-1}^{k+1}, x_i, x_{i+1:n}^k, z^k,w^k)$
    \ENDFOR
    \STATE $z^{k+1} \in \arg\min_{z} L(x^{k+1}, z,w^k)$
    \STATE $w^{k+1} = w^k + \rho(A(x^{k+1})+Qz^{k+1})$
\ENDFOR
\end{algorithmic}
\end{algorithm}
\end{minipage}
\end{wrapfigure}
ADMM is a powerful algorithm that can iteratively find a stationary point of the above Lagrangian function. Algorithm \ref{alg:admm} summarizes the steps of this algorithm.  At each iteration, it sequentially updates the current estimate by minimizing the augmented Lagrangian with respect to the variables in block $x_i$, $i\in\{1,...,n\}$, and $z$ when all other blocks are fixed at their current estimates. 
Afterwards, the dual variable $w$ is updated depending on how much the constraints are violated. 
It is important to note that the minimization sub-problems for updating each block are derived by fixing all other blocks. This results in an augmented Lagrangian corresponding to a linearly constrained problem, which is tractable and can be solved efficiently \cite{lin2015global,lin2018global}.

%In the next section, we show that the iterates generated by ADMM converge to the set of stationary points. Additionally, under a set of mild assumptions--detailed below--we establish the convergence rate of the algorithm. 

To show the importance of \cref{ass_A_inside_Q} consider the following example in which this assumption is violated while other aforementioned assumptions hold but the ADMM fails to converge to a feasible solution. 
Therefore, \cref{ass_A_inside_Q} is indispensable.

%This indicates that without \cref{ass_A_inside_Q}, for some cases ADMM is incapable to find a feasible point at all, let alone a stationary point. Therefore, \cref{ass_A_inside_Q} is indispensable.

\begin{example}\cite{gao2020admm} Consider the following problem which is of the form of \eqref{problem},
\begin{equation*}
\min _{x, y}x^2+y^2, \quad  \textrm{s.t.} \quad  x y=1.    
\end{equation*}
With an arbitrary initial point of the form $\left(x^0, 0, w^0\right)$, the ADMM's iterates will satisfy $\left(x^k, y^k\right) \rightarrow$ $(0,0)$ and $w^k \rightarrow-\infty$. Note that the limit point violates the constraint. 
\end{example}

%Similar to \cite{li2015global,jiang2019structured,gao2020admm}, our theoretical results relay on a set of assumptions which are crucial. 

\vspace{-.1cm}
\section{Theoretical Results}\label{sec:theorem}
\vspace{-.2cm}

Herein, we present our theoretical guarantees for ADMM when applied to Problem \eqref{problem}. 
In particular, the following theorem demonstrates that, under the stated assumptions, ADMM converges and establishes key properties of the limit point. 
\begin{shadowboxtext}
\begin{theorem}\label{theo_sublinear}
    Suppose that \cref{ass_strongly_convex} and \cref{ass_A_inside_Q} hold. If $\phi$ is $L_\phi$-smooth, then ADMM in \ref{alg:admm} with {$\rho\geq\max\{\frac{4L_\phi^2}{\mu_\phi\lambda_{\min}^+(Q^TQ)},\frac{4L_\phi^2}{\mu_\phi\sqrt{\lambda_{\min}^+(QQ^T)}}\}$} converges with at least sublinear convergence rate to a stationary point $(x^\star,z^\star,w^\star)$ of the augmented Lagrangian, i.e., 
    \begin{equation*}
    L(x^k,z^k,w^k)-L(x^\star,z^\star,w^\star)\in o(1/k),
    \end{equation*}
    where $x^\star=(x_1^\star, ..., x_n^\star)$ and for all $i$, 
    \begin{equation*}
        x_i^\star\in\arg\min_{x_i\in\mathbb R^{n_i}} f(x_i,x_{-i}^\star)+I_i(x_i),\quad  \textrm{s.t.} \quad   A(x_i,x_{-i}^\star)+Qz^\star=0,
    \end{equation*}
     and $z^\star$ is given by
    $z^\star\in\arg\min_{z\in\mathbb R^{n_z}}\phi(z)$, such that $A(x^\star)+Qz=0.$
%Moreover, the sublinear convergence rate is guaranteed, i.e., 
%\begin{equation*}
%    L(x^k,z^k,w^k)-L(x^\star,z^\star,w^\star)\in o(1/k).
%\end{equation*}
\end{theorem}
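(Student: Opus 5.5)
We follow the standard Lyapunov template for nonconvex ADMM: a sufficient decrease of the augmented Lagrangian, a lower bound, a subgradient (relative error) bound, and then the $\alpha$-PL property of Definition~\ref{def:kl}.

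\textbf{Step 1: dual control by the last block.} The optimality condition of the $z$-update, combined with the dual update, gives $\nabla\phi(z^{k+1})+Q^Tw^{k+1}=0$. Since $Q$ has full row rank (Assumption~\ref{ass_A_inside_Q}), $w^{k+1}$ is uniquely determined by $\nabla\phi(z^{k+1})$ through $w^{k+1}=-(QQ^T)^{-1}Q\nabla\phi(z^{k+1})$. Using $L_\phi$-smoothness we then get
\[
\|w^{k+1}-w^k\|\le \frac{L_\phi}{\sqrt{\lambda_{\min}^+(QQ^T)}}\,\|z^{k+1}-z^k\|.
\]
This is precisely why the constraint coupling through $A(x)$ causes no trouble: the multipliers are slaved to $z$ alone.

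\textbf{Step 2: sufficient decrease.} We track the change in $L$ across one outer iteration.
\begin{itemize}
\item Each $x_i$-subproblem is convex in $x_i$, because $A$ is affine in $x_i$ and $f$ is strongly convex. So $L$ decreases by at least $\frac{\mu_f}{2}\|x_i^{k+1}-x_i^k\|^2$.
\item The $z$-subproblem is $\mu_\phi$-strongly convex, so $L$ decreases by at least $\frac{\mu_\phi}{2}\|z^{k+1}-z^k\|^2$.
\item The dual step increases $L$ by exactly $\frac1\rho\|w^{k+1}-w^k\|^2$.
\end{itemize}
Combining these with Step 1 gives
\[
L^{k+1}\le L^k-\frac{\mu_f}{2}\|x^{k+1}-x^k\|^2-\Bigl(\frac{\mu_\phi}{2}-\frac{L_\phi^2}{\rho\lambda_{\min}^+(QQ^T)}\Bigr)\|z^{k+1}-z^k\|^2 .
\]
The lower bound $\rho\ge 4L_\phi^2/(\mu_\phi\lambda^+_{\min})$ in the statement makes the coefficient positive, at least $\mu_\phi/4$. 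I expect the $\sqrt{\cdot}$ variant of the constant in the hypothesis to come from a slightly different bookkeeping, but either version suffices.

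\textbf{Step 3: lower bound and boundedness.} Choose $\hat z$ with $Q\hat z=-A(x^k)$, which exists by full row rank. Then
\[
L^k=F(x^k)+\phi(z^k)+\langle \nabla\phi(z^k),\hat z-z^k\rangle+\frac\rho2\|Q(z^k-\hat z)\|^2
\ge F(x^k)+\phi(\hat z)+c\|Q(z^k-\hat z)\|^2 .
\]
This follows from the descent lemma for $\phi$ once $\rho>L_\phi/\lambda^+_{\min}(Q^TQ)$, after restricting to the component in the row space of $Q^T$. Hence $L^k$ is bounded below by $\inf F+\inf\phi$. Strong convexity then gives coercivity, so the iterates are bounded. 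Summing the decrease inequality gives $\sum_k\bigl(\|x^{k+1}-x^k\|^2+\|z^{k+1}-z^k\|^2\bigr)<\infty$, and by Step 1 the dual increments are summable as well. Thus every limit point is stationary: it is feasible, because $A(x^{k+1})+Qz^{k+1}=(w^{k+1}-w^k)/\rho\to0$. The blockwise characterization in the statement follows because each $x_i^\star$ satisfies the KKT conditions of a convex problem in $x_i$, and $z^\star$ satisfies those of the convex problem in $z$.

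\textbf{Step 4: relative error and rate.} We bound $\mathrm{dist}(0,\partial L(x^{k+1},z^{k+1},w^{k+1}))\le C\bigl(\|x^{k+1}-x^k\|+\|z^{k+1}-z^k\|\bigr)$. The terms in this bound are:
\begin{itemize}
\item the cross-block differences in $\nabla_{x_i}$ of the smooth part, which are Lipschitz on the bounded set containing the iterates, since $A$ is quadratic and $f\in C^2$;
\item the $w$-gradient, which is $(w^{k+1}-w^k)/\rho$.
\end{itemize}
Because $L$ is subanalytic (Assumption~\ref{ass_strongly_convex}), it has the $\alpha$-PL property near the limit point. The standard KL argument then yields convergence of the whole sequence and $L^k-L^\star\to0$. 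For the rate, set $r_k=L^k-L^\star$. Combining Steps 2 and 4 gives $r_k-r_{k+1}\ge c\,r_{k+1}^{2/\alpha}$ with $\alpha\in(1,2]$. This implies $r_k=O(k^{-\alpha/(2-\alpha)})$ when $\alpha<2$ and a linear rate when $\alpha=2$. Since $\alpha>1$, we have $\alpha/(2-\alpha)>1$, and therefore $r_k=o(1/k)$.

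\textbf{Main obstacle.} The main obstacle is Step 3, the lower bound of $L$ together with the boundedness of the iterates. The lower bound uses only $z$-coercivity, while full row rank (rather than invertibility) of $Q$ forces us to work on its range. I would try the argument above first, projecting onto $\mathrm{range}(Q^T)$.
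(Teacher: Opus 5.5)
Your proposal is correct. It has the same architecture as the paper's proof: blockwise sufficient decrease, the dual slaved to $z$ through $Q^Tw^k=-\nabla\phi(z^k)$, a lower bound giving boundedness, a relative-error bound, and the recursion $r_k-r_{k+1}\ge c\,r_{k+1}^{2/\alpha}$.

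The one genuinely different step is your lower bound in Step 3.
\begin{itemize}
\item \emph{The paper's route.} It takes $\tilde z^k\in\arg\min\{\phi(z): A(x^k)+Qz=0\}$ and notes that $z^k$ minimizes $\phi$ over $\{z: Qz=Qz^k\}$. It then applies the perturbation result of Lemma~\ref{bound_z_tilde_z} to get $\|z^k-\tilde z^k\|\le\gamma\|A(x^k)+Qz^k\|$, with $\gamma=(1+2L_\phi/\mu_\phi)/\sqrt{\lambda_{\min}^+(QQ^T)}$. This step, not the decrease bookkeeping as you guessed, is where the $\sqrt{\cdot}$ branch of the $\rho$-condition comes from.
\item \emph{Your route.} You choose $\hat z$ with $z^k-\hat z\in\mathrm{range}(Q^T)$. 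Then $\|z^k-\hat z\|^2\le\|Q(z^k-\hat z)\|^2/\lambda_{\min}(QQ^T)$ holds by construction, and no external lemma is needed. You only need $\rho>L_\phi/\lambda_{\min}(QQ^T)$. This follows from the first branch of the hypothesis because $L_\phi\ge\mu_\phi$; you should state this explicitly.
\end{itemize}

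Your route is the more robust one. The paper's display carries the coefficient $\frac{\rho}{2}-\frac{\gamma L_\phi}{2}$, but the lemma actually yields $\frac{\rho}{2}-\frac{\gamma^2L_\phi}{2}$. The corrected requirement, $\rho>L_\phi(1+2L_\phi/\mu_\phi)^2/\lambda_{\min}(QQ^T)$, is not guaranteed by the stated maximum. For example, $\lambda_{\min}=1$, $L_\phi=10$, $\mu_\phi=1$ gives the hypothesis $\rho\ge 400$ but requires $\rho>4410$. The paper's choice of $\tilde z^k$ only buys a comparison with the feasible value $\phi(\tilde z^k)$, which boundedness does not need.

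Two minor points:
\begin{itemize}
\item The identity $\nabla\phi(z^k)+Q^Tw^k=0$ holds only for $k\ge1$, so run the analysis from $k=1$. This also removes the paper's unstated initialization $Q^Tw^0=-\nabla\phi(z^0)$.
\item Feasibility of limit points is not yet stationarity. The claim that every limit point is stationary should come after your Step 4 bound, since that bound supplies the vanishing $x$- and $z$-components of the subgradient.
\end{itemize}
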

\end{shadowboxtext}
%\vspace{-.1cm}
%However, the next theorem shows that the convergence of the iterates is sublinear. To achieve linear convergence, additional assumptions are required, which are discussed in the subsequent theorems.

%Since, the form of the limiting point cannot be verified a priori, it is challenging to relay on the result of the above Corollary to prove the linear convergence rate of the ADMM. 

This result shows that the limit point $(x^\star,z^\star)$ satisfies properties analogous to those of a Nash equilibrium point: that is, when all blocks except one are fixed at their limit values (e.g., $(x_{-i}^\star,z^\star)$), the objective function is optimized with respect to the remaining block (e.g., $x_i$).
Previously, \cite{gao2020admm} showed the convergence of ADMM but without providing any convergence rate. 
 In addition, their result requires stronger assumptions, such as $f$ needs to be independent of certain variables, which is not needed in Theorem \ref{theo_sublinear}.  

Next, we show that under additional assumptions such as the second-order differentiability of the Lagrangian at the limiting point and a sufficiently small degree of non-convexity, a linear convergence rate for ADMM when applied to \eqref{problem} can be guaranteed. 
The degree of non-linearity is characterized by the relation between matrix $Q$ (the coefficient of the linear term in the constraints) and matrices $\{C_i\}$ (the coefficients of the non-linear term). 
%\vspace{-.1cm}
\begin{shadowboxtext}
\begin{theorem}\label{theo_linear}
Suppose that the assumptions of \cref{theo_sublinear} hold. Moreover, let $f$ be $L_f$-smooth and $L(x,z,w)$ is second-order differentiable at the limit point $(x^\star,z^\star,w^\star)$. When matrix $Q$ satisfies
\begin{small}
\begin{equation}\label{Q_condition}
     \|C\|\in\mathcal{O}\Big(\|(QQ^T)^{-1}Q\|^{-1}\cdot \min\Big\{m_1,m_2(\lambda_{min}(QQ^T))^{\frac{1}{2}},
    m_3(\lambda_{min}(QQ^T))^{\frac{1}{4}}\|(QQ^T)^{-1}Q\|^{\frac{1}{2}}\Big\}\Big), 
\end{equation}
\end{small}
%\begin{small}
%\begin{equation}\label{Q_condition}
%\begin{aligned}
%     \|C\|\in\mathcal{O}\Big(&\min\Big\{m_1\|(QQ^T)^{-1}Q\|^{-1},m_2\lambda_{min}(QQ^T))^{1/2}\|(QQ^T)^{-1}Q\|^{-1},\\
%    &m_3(\lambda_{min}(QQ^T))^{1/4}\|(QQ^T)^{-1}Q\|^{-1/2}\Big\}\Big), 
%\end{aligned}
%\end{equation}
%\end{small}
where $\|C\|:=\max_i\|C_i\|$ and constants $\{m_i\geq0\}$ depending on problem's parameters e.g., $L_f, \mu_f,...$ then, there exists $c_1>1$ such that the iterates of Algorithm \ref{alg:admm} satisfy
\begin{equation*}
    L(x^k,z^k,w^k)-L(x^\star,z^\star,w^\star)\in\mathcal{O}(c_1^{-k}).
\end{equation*}
Furthermore, $(x^\star,z^\star)$ is a local minimum of the problem \eqref{problem}.
\end{theorem}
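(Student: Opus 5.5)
The plan is to reuse the descent/Lyapunov machinery behind \cref{theo_sublinear} and upgrade the \L{}ojasiewicz-type exponent from a general $\alpha$ to $\alpha=2$, i.e.\ a genuine PL inequality near the limit point. First I will recall from the proof of \cref{theo_sublinear} two facts. The first is a sufficient decrease estimate $L^k-L^{k+1}\geq c_0\big(\sum_i\|x_i^{k+1}-x_i^k\|^2+\|z^{k+1}-z^k\|^2\big)$. It comes from strong convexity of each block subproblem together with the bound $\|w^{k+1}-w^k\|\leq \frac{L_\phi}{\sqrt{\lambda_{\min}^+(QQ^T)}}\|z^{k+1}-z^k\|$, which follows from the $z$-optimality condition $\nabla\phi(z^{k+1})+Q^Tw^{k+1}=0$ and full row rank of $Q$. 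The second is a relative error bound $\mathrm{dist}(0,\partial L(x^{k+1},z^{k+1},w^{k+1}))\leq c_2\big(\|x^{k+1}-x^k\|+\|z^{k+1}-z^k\|\big)$, valid on a bounded neighborhood of the iterates. With these in hand, linear convergence follows from the standard KL argument once I show that $L$ satisfies the $2$-PL property of Definition~\ref{def:kl} on a neighborhood of $(x^\star,z^\star,w^\star)$. The resulting recursion $L^{k+1}-L^\star\leq (1+c)^{-1}(L^k-L^\star)$ gives $c_1=1+c>1$.

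The core step is the $2$-PL estimate, which I will derive from second-order information at the limit point. I will eliminate $w$ and $z$ locally, using $w=-(QQ^T)^{-1}Q\nabla\phi(z)$ and the feasibility manifold $Qz=-A(x)$, and parametrize $z=-Q^\dagger A(x)+P_{\ker Q}u$. The reduced objective is then $\Psi(x,u)=F(x)+\phi(-Q^\dagger A(x)+P u)$, and its Hessian at the limit point splits into a strongly convex part and a perturbation. The strongly convex part is $\nabla^2 f+J^TQ^{\dagger T}\nabla^2\phi\,Q^\dagger J$ (plus the $u$-block), which is bounded below by $\mu_f$ and $\mu_\phi$. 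The perturbation is $\sum_j (w^\star)_j C_j$, which comes from the curvature of the constraint and has norm at most $\|C\|\,\|w^\star\|$. Since $\|w^\star\|\leq \|(QQ^T)^{-1}Q\|L_\phi\|z^\star-\phi_z^\star\|$-type bounds hold, the smallness condition \eqref{Q_condition} ensures that $\|C\|\|(QQ^T)^{-1}Q\|\cdot(\text{bounded quantities})<\mu_f/2$. This makes the reduced Hessian positive definite, so $(x^\star,z^\star)$ is a strict local minimum; this gives the last claim of the theorem.

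The three terms in the min of \eqref{Q_condition} will correspond to three perturbation contributions. The first is the $w^\star$-weighted $C$ term. The second is the term $\rho\,(A(x)+Qz)^T C$ off the feasible set, which is controlled through the constraint residual and scales with $\lambda_{\min}(QQ^T)^{1/2}$. The third is a cross term in the Lagrangian Hessian mixing $J_A=\nabla A$, which is linear in $C$ and quadratic via Young's inequality, hence the $1/4$ and $1/2$ powers. Having these, I will show that the indicator parts are handled by the prox-regularity of convex sets: restricted to the active face, the normal cone is absorbed. This gives $\mathrm{dist}(0,\partial L)^2\geq C\,(L-L^\star)$ locally, using the $L_f$-smoothness and the second-order differentiability of $L$ at the limit point to control the Taylor remainder.

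I expect the main obstacle to be this PL step. $L$ is a saddle function in $w$, so its augmented-Lagrangian Hessian is indefinite, and PL must be shown for the mixed primal--dual gradient rather than via convexity. I will first try bounding $|L-L^\star|$ by a quadratic form in $(x-x^\star,z-z^\star,w-w^\star)$ and bounding $\|\nabla L\|^2$ from below through the invertible KKT matrix, whose nonsingularity follows from the positive definite reduced Hessian and the full row rank of $Q$. The delicate part is tracking the constants so that they match \eqref{Q_condition} exactly.
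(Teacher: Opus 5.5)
You have most of the paper's architecture: sufficient decrease and the relative-error bound from \cref{theo_sublinear}, a PL inequality with $\alpha=2$ at a nondegenerate critical point, positive definiteness of $\nabla^2 f(x^\star)+\sum_j w_j^\star C_j+J^T\big(Q(\nabla^2\phi(z^\star))^{-1}Q^T\big)^{-1}J$, and second-order sufficiency for local minimality. Here $J$ is the Jacobian of $A$ at $x^\star$, and your elimination of $(z,w)$ yields exactly this Schur complement. Your direct Taylor argument for PL is a fine, more elementary substitute for the paper's Morse--Bott citation: a nonsingular $\nabla^2 L(p^\star)$ gives $\|\nabla L(p)\|\gtrsim\|p-p^\star\|$ and $|L(p)-L^\star|\lesssim\|p-p^\star\|^2$. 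Two things are needed to make it go through:
\begin{itemize}
\item Second-order differentiability of $L$ at the limit forces $x^\star$ into the interior of $\prod_i X_i$. The indicators therefore vanish nearby, and no active-face or prox-regularity argument is needed.
\item The augmented part $\rho G^TG$ of the Hessian, with $G=[J\;\,Q\;\,0]$, does not change its kernel. The $w$-row of the Hessian forces $Ja+Qb=0$, so invertibility of the plain KKT matrix suffices.
\end{itemize}

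The genuine gap is the step you call ``bounded quantities''. All that (\ref{Q_condition}) must control is $\|\sum_j w_j^\star C_j\|\le\sqrt{n_c}\,\|C\|\,\|w^\star\|$. For that you need a bound on $\|w^\star\|$, equivalently on $\|z^\star-z_\phi^\star\|$, in terms of problem data and independent of the unknown limit point. The paper gets it as follows:
\begin{itemize}
\item It initializes feasibly and dual-consistently, with $A(x^0)+Qz^0=0$ and $Q^Tw^0=-\nabla\phi(z^0)$, so that $L^0=f(x^0)+\phi(z^0)$.
\item Monotone decrease gives $f(x^\star)+\phi(z^\star)=L^\star\le L^0$. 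Combined with smoothness and strong convexity, this yields $\|z^\star-z_\phi^\star\|^2\lesssim\frac{L_f}{\mu_\phi}(\|x_f^\star\|^2+\|x^0\|^2)+\frac{L_\phi}{\mu_\phi}(\|z_\phi^\star\|^2+\|z^0\|^2)$.
\item It takes $z^0=-Q^+A(x^0)$, so $\|z^0\|\le\lambda_{\min}(QQ^T)^{-1/2}\|A(x^0)\|$, with $\|A(x^0)\|\lesssim\sqrt{n_c}\,(\|C\|\|x^0\|^2+\|d\|\|x^0\|+\|e\|)$.
\end{itemize}
This is where the three entries of the min come from:
\begin{itemize}
\item $m_1$ comes from the $x_f^\star,x^0,z_\phi^\star$ part.
\item $m_2\lambda_{\min}(QQ^T)^{1/2}$ comes from the $\|d\|\|x^0\|+\|e\|$ part of $\|z^0\|$.
\item $m_3\lambda_{\min}(QQ^T)^{1/4}\|(QQ^T)^{-1}Q\|^{1/2}$ comes from the $\|C\|\|x^0\|^2$ part, which makes the requirement quadratic in $\|C\|$.
\end{itemize}
The mechanisms you propose do not produce these terms. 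The residual curvature $\rho\sum_i(A(x)+Qz)_iC_i$ vanishes at the feasible limit, and nearby it is absorbed into the radius of the PL neighborhood without imposing any condition on $C$. The cross term $J^T(Q(\nabla^2\phi)^{-1}Q^T)^{-1}J$ is positive semidefinite, so it only helps and needs no Young's inequality. As written, your plan cannot deliver (\ref{Q_condition}); the missing ingredient is this limit-independent bound on $\|w^\star\|$ via the initial Lagrangian value.
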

\end{shadowboxtext}
Previous results by \cite{lin2015global} on the performance of ADMM when applied to problems with linear constraints can be viewed as a special case of \cref{theo_linear}. 
Namely, when $\|C\|=0$, the constraints in \eqref{problem} reduce to linear constraints and subsequently, \cref{Q_condition} holds. Thus, according to the above result, the linear convergence of Lagrangian holds, which has been proved in the literature.
In addition, \cref{theo_linear} implies that even if nonlinear terms in the constraints exist, as long as $\|C\|$ is small enough, the linear convergence is still preserved. 

%\textbf{Proof Sketch:}
%\textit{We outline the main steps to prove \cref{theo_linear}, with the details provided in the Appendix \ref{app:proofs_1}.First, we shows that the Lagrangian function decays along the generated iterates when the coefficient $\rho$ is sufficiently large enough, i.e.,
   % \begin{equation*}
    %    L(x^k,z^k,w^k)-L(x^{k+1},z^{k+1},w^{k+1})\geq C\big(\|x^k-x^{k+1}\|^2+\|z^k-z^{k+1}\|^2+\|w^k-w^{k+1}\|\big).
    %\end{equation*}
%Next, by showing that the generated iterates $\{x^k,z^k,w^k\}$ are bounded, we conclude that a limit point exists. On the other hand, we show that the Lagrangian satisfies the $\alpha$-PL property around the limit point. As a consequence, we can obtain the convergence rate using the $\alpha$-PL property.
%}

%\begin{itemize}
    %\item The Lagrangian always decreases during the iteration when $\rho$ is large enough, i.e.
    %\begin{equation*}
    %    L(x^k,z^k,w^k)-L(x^{k+1},z^{k+1},w^{k+1})\geq C(\|x^k-x^{k+1}\|^2+\|z^k-z^{k+1}\|^2).
    %\end{equation*}
    %\item The iterates ${x^k,z^k,w^k}$ is bounded, thus there exists a limit point.
 
    %\item The Lagrangian satisfies the $\alpha$-PL property around the limit point. 
 
%   \item The convergence rate can be computed by considering the $\alpha$-PL property and sufficiently decreasing property.
%\end{itemize}

It is important to emphasize that the above result requires differentiability of the Lagrangian at the limit point, which may not be valid in certain problems. Next, we replace this assumption with an additional minor assumption on the constraints for $x_i$s. 
Namely, we assume that they belong to some polyhedrals (see Appendix \ref{app:defs}). These types of constraints are common in various practical problems. 

\vspace{-.1cm}
\begin{shadowboxtext}
\begin{theorem}\label{theo_linear_nondiff}
    Under the assumptions of \cref{theo_sublinear}, when matrix $Q$ satisfies \eqref{Q_condition} and $\{I_i\}$ are the indicator functions of some polyhedral, then, the iterates of ADMM satisfy
\begin{equation*}
    L(x^k,z^k,w^k)-\!\!\min_{(x,z)\in \mathcal{B}(x^k,z^k;r)}L(x,z,w^k)\in\mathcal{O}(c_2^{-k}),
    \vspace{-.2cm}
\end{equation*}
where $c_2\!>\!1$ and $r\!>\!0$ are constant. Furthermore, $\lim_{k\to\infty}(x^k,z^k)\!=\!(x^\star,z^\star)$ is a local minimum of problem \eqref{problem}.
\end{theorem}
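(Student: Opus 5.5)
We will reuse the descent and boundedness machinery behind \cref{theo_sublinear}. The only ingredient of \cref{theo_linear} that must be replaced is the second-order differentiability at the limit point. In its place we will use an error-bound argument that exploits the polyhedral structure of the sets $X_i$.

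\textbf{Step 1 (inherited estimates).} From the proof of \cref{theo_sublinear}, with $\rho$ chosen as stated, we have sufficient decrease,
\begin{equation*}
L^k-L^{k+1}\geq c\big(\|x^k-x^{k+1}\|^2+\|z^k-z^{k+1}\|^2\big),
\end{equation*}
where $L^k:=L(x^k,z^k,w^k)$. Two further facts come from the same proof. Since $Q$ has full row rank, the optimality condition of the $z$-step gives $Q^Tw^{k+1}=-\nabla\phi(z^{k+1})$. Hence $\|w^k-w^{k+1}\|\leq L_\phi\|z^k-z^{k+1}\|/\sqrt{\lambda_{\min}(QQ^T)}$. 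We also have boundedness of the iterates and convergence to $(x^\star,z^\star,w^\star)$.

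\textbf{Step 2 (local strong convexity on the polyhedral set).} Fix $w=w^k$ and consider $\Psi_k(x,z):=L(x,z,w^k)$ restricted to $\prod_i X_i\times\mathbb R^{n_z}$. Its smooth part has Hessian
\begin{equation*}
\nabla^2 f+\sum_j\big(w_j+\rho (A(x)+Qz)_j\big)C_j+\rho\,J^TJ
\end{equation*}
in the $x$-block, together with the $z$-coupling terms, where $J=[\nabla A(x)\;Q]$. This smooth part is $C^2$ everywhere; the nonsmoothness enters only through the indicators. Near the limit we use $Q^Tw^\star=-\nabla\phi(z^\star)$, which gives $\|w^\star\|\leq\|(QQ^T)^{-1}Q\|\,\|\nabla\phi(z^\star)\|$. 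The feasibility residual is small. Under \eqref{Q_condition}, the indefinite term $\sum_j w_jC_j$ is therefore dominated by $\mu_f I$ plus the Schur complement generated by $\rho Q^TQ$ and $\mu_\phi$. This is the same computation that produced \eqref{Q_condition} in \cref{theo_linear}, except that here it is carried out at nearby points rather than only at the limit. We conclude that $\Psi_k$ is $\mu$-strongly convex on $\mathcal B(x^k,z^k;r)\cap(\prod X_i\times\mathbb R^{n_z})$, with $r$ and $\mu$ uniform in $k$ for all large $k$. This step also yields the local-minimum claim. Strong convexity of $L(\cdot,\cdot,w^\star)$ on a neighborhood of $(x^\star,z^\star)$ within the feasible box, together with feasibility $A(x^\star)+Qz^\star=0$, makes $(x^\star,z^\star)$ a local minimizer of the original problem.

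\textbf{Step 3 (error bound and relative error).} The constraint set is a polyhedron intersected with a ball, and the objective is strongly convex on it. So the projected-gradient residual controls the gap:
\begin{equation*}
L^k-\min_{\mathcal B}\Psi_k\leq \tfrac{1}{2\mu}\,\mathrm{dist}\big(0,\partial\Psi_k(x^k,z^k)\big)^2,
\end{equation*}
where $\min_{\mathcal B}\Psi_k$ is shorthand for the minimum over $\mathcal B(x^k,z^k;r)$ in the theorem. This is a PL inequality with $\alpha=2$ for $\Psi_k$. Polyhedrality ensures that the normal cones behave Lipschitz-stably, via Hoffman-type bounds. That lets us bound the subgradient produced by the block updates: the optimality condition of each $x_i$-step, plus the multi-affine, hence locally Lipschitz, cross terms, gives
\begin{equation*}
\mathrm{dist}\big(0,\partial\Psi_k(x^{k+1},z^{k+1})\big)\leq K\big(\|x^k-x^{k+1}\|+\|z^k-z^{k+1}\|+\|w^k-w^{k+1}\|\big).
\end{equation*}
Combining this with Step 1 gives $L^{k+1}-\min\Psi_{k+1}\leq C'(L^k-L^{k+1})+(\text{dual shift})$. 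The dual-shift term is $\Psi_{k+1}-\Psi_k=\langle w^{k+1}-w^k,A+Qz\rangle$, which is again $O(\|z^k-z^{k+1}\|^2)$ by the same bound on $\|w^k-w^{k+1}\|$.

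\textbf{Step 4 (recursion).} With $\Delta_k:=L^k-\min\Psi_k$, we show $L^k-L^{k+1}$ dominates a fixed fraction of $\Delta_{k+1}$. This gives $\Delta_{k+1}\leq q\Delta_k$ with $q<1$, i.e.\ rate $c_2^{-k}$ with $c_2=1/q$.

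The main obstacle is Step 2. We need the curvature bound to hold uniformly on a ball around every late iterate, not just at the limit, and without invoking a Hessian of the full nonsmooth $L$. My first attempt will bound $\|w^k\|$ and $\|A(x^k)+Qz^k\|$ uniformly via the $z$-optimality identity and convergence. The factor $\|(QQ^T)^{-1}Q\|$ in \eqref{Q_condition} is then exactly what is needed to absorb $\sum_j w_j^kC_j$. I will choose $r$ small enough that the quadratic variation of $A$ across the ball is also absorbed.
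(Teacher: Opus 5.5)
Your skeleton matches the paper's:
\begin{itemize}
\item sufficient decrease and the relative-error bound from \cref{theo_sublinear};
\item local strong convexity of $L(\cdot,\cdot,w)$ near $(x^\star,z^\star,w^\star)$, which comes from \eqref{Q_condition} forcing $\nabla^2 f(x^\star)+\sum_j w_j^\star C_j\succ 0$;
\item an $\alpha=2$ error bound on the polyhedral domain;
\item a one-step recursion.
\end{itemize}
Step~2 is not where the difficulty lies. The smooth part of $L$ is $C^2$ jointly in $(x,z,w)$, so positive definiteness at the limit extends to a uniform neighbourhood by continuity.

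\textbf{The gap is in Step~4.} Write $p^k=(x^k,z^k,w^k)$, $m_k:=\min_{\mathcal{B}}L(\cdot,\cdot,w^k)$ and $\Delta_k:=L^k-m_k$. Then $L^k-L^{k+1}=\Delta_k-\Delta_{k+1}+(m_k-m_{k+1})$. So an estimate $\Delta_{k+1}\le C(L^k-L^{k+1})$ yields $\Delta_{k+1}\le q\Delta_k$ with $q<1$ only if $m_{k+1}\ge m_k$, or if $m_k-m_{k+1}\le\epsilon(L^k-L^{k+1})$ with $\epsilon<1$. You never show this, and your dual-shift estimate does not supply it.

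The exact identity is $L(x,z,w^{k+1})-L(x,z,w^k)=\rho\langle A(x^{k+1})+Qz^{k+1},A(x)+Qz\rangle$. This equals $\|w^{k+1}-w^k\|^2/\rho$ only at $(x,z)=(x^{k+1},z^{k+1})$. The min-values, however, are attained at a ball minimizer $(\hat x,\hat z)$, where the inner product has no sign. You can sharpen it to $O(\|p^{k+1}-p^k\|^2)$ using $\|(\hat x,\hat z)-(x^{k+1},z^{k+1})\|\le\mathrm{dist}(0,\partial\Psi_{k+1}(x^{k+1},z^{k+1}))/\mu$. Even then its constant is not small, so it cannot be absorbed. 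You also compare minima over balls with different centres.

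The paper closes this same step by asserting $L(x,z,w^{k+1})-L(x,z,w^k)=\rho\|A(x)+Qz\|^2\ge 0$ for all $(x,z)$. That is the same conflation, so you cannot import it.

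\textbf{A clean repair avoids comparing $m_k$ and $m_{k+1}$ altogether.}
\begin{enumerate}
\item Apply the error bound to $L(\cdot,\cdot,w^{k+1})$ at $(x^{k+1},z^{k+1})$. Use the $(x,z)$-components of $v^{k+1}\in\partial L(x^{k+1},z^{k+1},w^{k+1})$ from \eqref{v_i_k}. Together with \eqref{L_diff_square} this gives $\Delta_{k+1}\le C(L^k-L^{k+1})$, with no dual-shift term.
\item Anchor on $L^\star:=L(x^\star,z^\star,w^\star)$. For large $k$ the ball around $(x^{k+1},z^{k+1})$ contains the feasible point $(x^\star,z^\star)$. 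Hence $m_{k+1}\le L(x^\star,z^\star,w^{k+1})=F(x^\star)+\phi(z^\star)=L^\star$.
\item Combining, $0\le L^{k+1}-L^\star\le\Delta_{k+1}\le C\big[(L^k-L^\star)-(L^{k+1}-L^\star)\big]$. So $L^k-L^\star$ contracts by the factor $C/(1+C)$.
\item Since $\Delta_{k+1}\le C(L^k-L^\star)$, $\Delta_k$ inherits the geometric rate. As a by-product you also get linear convergence of $L^k-L^\star$ itself.
\end{enumerate}

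\textbf{Two smaller points.} First, neither the error bound for a strongly convex function on a closed convex set nor the relative-error bound from the block optimality conditions needs a Hoffman-type argument. Second, your local-minimum claim also needs $0\in\partial_{x,z}L(x^\star,z^\star,w^\star)$, which is available from \cref{theo_sublinear}. Strong convexity plus feasibility alone do not make $(x^\star,z^\star)$ a minimizer.
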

\end{shadowboxtext}

\textbf{Approximated ADMM:} 
The algorithm in \ref{alg:admm} is required to solve a series of sub-problems at each iteration in order to update $\{x_i\}$ and $z$. 
The results presented in the previous section are established under the assumption that these sub-problems are solved exactly. 
%Although each of these sub-problems is strongly convex and can be efficiently solved using gradient-based methods, it remains to be determined whether the previous convergence rates hold if these sub-problems are solved approximately. 
 Although each sub-problem is strongly convex and efficiently solvable via gradient methods, it remains unclear whether the previous convergence rates hold under inexact solutions. 
In Appendix \ref{app:approximated}, we introduce an approximated-ADMM and provide its convergence guarantees.

\vspace{-.2cm}
\section{Application in Robotics}\label{sec:robotics}
\vspace{-.2cm}

% Figure \ref{fig:loco-manipulation} illustrates examples of locomotion and object manipulation problems. For both cases, we assume that interaction is realized through point contacts. 

% \paragraph{Locomotion Problem:} 
%\textcolor{red}{Maybe short intro for the problem, and then combining with the experimental results}
In the locomotion problem, the robot’s centroidal momentum dynamics are considered \citep{viereck2021learning,meduri2023biconmp}. The location, velocity, and angular momentum generated around the center of mass (CoM) are denoted by $\mathbf{c}$, $\mathbf{\dot{c}}$, and $\mathbf{k}$. We aim to optimize the objective function subject to the physics constraints, i.e., Newton-Euler equations.
% For both locomotion and manipulation problems, we would like to have the angular momentum at the end of the horizon equal to zero $k_T=0$ (zero rotation around CoM), while bringing the CoM to a desired location $c_T=c_d$.
By discretizing the Newton-Euler equations {and fixing the contact sequence and its timing,} the optimal control problem for locomotion can be written in the following unified way.
\begin{align}\label{eq:robotic1}
\vspace{-1cm}
\underset{\mathbf{c}, \mathbf{\dot{c}}, \mathbf{k}, \mathbf{f}}{\min} & \quad \sum_{i = 0}^{T-1} \phi_{t}(\mathbf{c}_{i}, \mathbf{\dot {c}}_{i}, \mathbf{k}_{i}, \mathbf{f}_{i})
+ \phi_{T}(\mathbf{c}_{T}, \mathbf{\dot {c}}_{T}, \mathbf{k}_{T}),  \\ \notag
\text{s.t.} & \quad \mathbf{c}_{i+1} = \mathbf{c}_{i} + \mathbf{\dot{c}}_{i} \Delta t,\ \quad \mathbf{\dot{c}}_{i+1} = \mathbf{\dot{c}}_{i} + \sum_{j=1}^{N}  \frac{\textbf{f}_{i}^{j}}{m} \Delta t + \mathbf{g}\Delta t, \quad \mathbf{\dot{c}}_{0} = \mathbf{\dot{c}}_{\textit{init}},\quad \mathbf{c}_{0}= \mathbf{c}_{\textit{init}}, \\ \notag
&\quad \mathbf{k}_{i+1} = \mathbf{k}_{i} + \sum_{j=1}^{N} (\mathbf{r}^{j}_{i} - \mathbf{c}_{i}) \times \mathbf{f}^{j}_{i} \Delta t, \quad  \mathbf{k}_{0} = \mathbf{k}_{\textit{init}},  \quad \mathbf{f}_i^j\in\Omega_i^j, \ \forall i,j, 
\end{align}
where $\Delta t$ is the time discretization, 
subscript $i$ stands for time index, $T$ being the last one. Superscript $j$ specifies the index of the end-effector in contact with the environment, and $N$ is the number of the end-effector.
Variables $\mathbf{c}_{i}, \dot{\mathbf{c}}_{i}, \mathbf{k}_{i},\mathbf{f}_i^j$ denote the location, speed, angular momentum of the center of mass and the friction force at $j$-th contact at $i$-th discretization. The location of the end-effector in contact $\mathbf{r}$ is known.
The initial conditions for the CoM are given by $\mathbf{c}_{\textit{init}}, \mathbf{\dot{c}}_{\textit{init}}, \mathbf{k}_{\textit{init}}$. 
Function $\phi_{t}$ represents the running cost, $\phi_{T}$ is the terminal cost, and the friction $\mathbf{f}_i^j$ is constrained to lie within a safe region $\Omega_i^j$, which we assume it is cone and use polyhedral approximation to represent it.
%approximation of the friction cone. 
Note that this is a multi-affine equality constraint due to the term $\mathbf{c}\times \mathbf{f}$ in the angular momentum dynamics.
 % \textcolor{blue}{what is $\mathbf{r}$?}

%$\phi$ is the cost function and $\Delta t$ is the discretization time. 
%$n$ is a boolean that allows applying force at time-step $i$ by the end-effector $j$, if is in contact with the environment/object. Finally, $\mathbf{c}_{\textit{init}}$ and $\mathbf{\dot{c}}_{\textit{init}}$ are the initial CoM positin and velocity.

%Next, we reformulate the above problem to obtain an equivalent problem in the form of \eqref{problem}. Subsequently, by applying the results from the previous section, we obtain the convergence rate of the ADMM applied to this problem. 

Next, we reformulate the problem into the form of \eqref{problem} and apply the previous results to derive the ADMM convergence rate.
First, notice that the variables $\mathbf{c}_i$ and $\mathbf{\dot{c}}_i$ can be rewritten as functions of $\mathbf{f}=\{\mathbf{f}_i\}$. See Appendix \ref{sec:loco_app} for details.
%\begin{small}
%\begin{equation*}
%\begin{aligned}
%    \mathbf{c}_i(\mathbf{f})&=\mathbf{c}_{\text {init }}+\dot{\mathbf{c}}_{\text {init }}i(\Delta t)+\sum_{i'=0}^{i-2}(i-1-i')\Big(\sum_{j=1}^N  \frac{\mathbf{f}_{i'}^j}{m}+\mathbf{g}\Big)(\Delta t)^2\!,\\
 %   \mathbf{\dot{c}}_{i}(\mathbf{f})&= \mathbf{\dot{c}}_{init} + \sum_{i'=0}^{i-1}\sum_{j=1}^{N}  \Big(\frac{\textbf{f}_{i'}^{j}}{m} + \mathbf{g}\Big)(\Delta t).
%\end{aligned}
%\end{equation*}
%\end{small}
Second, by defining a new set of variables $\mathbf{k'}=\{\mathbf{k'}_i\}$ as $\mathbf{k'}_{i+1}:=\mathbf{k}_{i+1}-\mathbf{k}_{i}$ for $i\geq0$ and $\mathbf{k'}_{0}:=\mathbf{k}_{init}$ and assuming that the running and terminal costs can be decomposed into $ f(\mathbf{f})+\phi(\mathbf{k'}) $, we obtain the following equivalent problem. 
\begin{small}
\begin{align}\label{robotic}
 \min_{\mathbf{k'}, \mathbf{f}}& \quad f(\mathbf{f})+\phi(\mathbf{k'})+\sum_{i=0}^T I_i(\mathbf{f}_i), \\ \notag 
 \text { s.t. }&  \mathbf{k'}_0=\mathbf{k}_{\textit{init}},\ \ \mathbf{k'}_1=\sum_{j=1}^N \Big(\big(\mathbf{r}_0^j-\mathbf{c}_{\text {init }}\big) \times \mathbf{f}_0^j\Big) \Delta t,\ \ \mathbf{k'}_2=\sum_{j=1}^N \Big(\big(\mathbf{r}_1^j-\mathbf{c}_{\text {init }}-\dot{\mathbf{c}}_{\text {init }}\Delta t\big) \times \mathbf{f}_1^j\Big) \Delta t,\\
& \mathbf{k'}_{i+1}=\sum_{j=1}^N \Big(\big(\mathbf{r}_i^j-\mathbf{c}_{\text {init }}-\dot{\mathbf{c}}_{\text {init }}i(\Delta t)-\!\sum_{i'=0}^{i-2}(i\!-\!1\!-\!i')(\sum_{l=1}^N  \frac{\mathbf{f}_{i'}^l}{m}+\mathbf{g})(\Delta t)^2\big) \times \mathbf{f}_i^j\Big) \Delta t, \ \ i\geq 2, \notag
\end{align}
\end{small}

\noindent
Problem in \eqref{robotic} has the same form as in \eqref{problem}. This can be seen by defining $z$ and $x$ in \eqref{problem} to be
$z:=[\mathbf{k'}_0,\mathbf{k'}_1,\cdots,\mathbf{k'}_T]^T$ and $x:=[x_0,\cdots,x_T]^T$, where $x_i:=[\mathbf{f}_i^1,\cdots,\mathbf{f}_i^N]$.
% \text{if } $t\geq 2$.
% \begin{equation*}
% \begin{aligned}
%     \mathbf{Q}(\mathbf{Z})&=\mathbf{Q}(\mathbf{k'})=[\mathbf{k'}_0,\mathbf{k'}_1,\mathbf{k'}_2,\cdots,\mathbf{k'}_T]^T.
% \end{aligned}
% \end{equation*}
By denoting the corresponding Lagrangian function of the above problem as $L(\mathbf{f},\mathbf{k'},w)=L(x,z,w)$, we can apply the results from the previous section. 
%The corresponding Lagrangian function of this problem is denoted by $L(x,z,w)=L(\mathbf{f},\mathbf{k'},w)$. We are now ready to apply the results in the previous section to the equivalent formulation of the locomotion problem  in \eqref{robotic}. 

\begin{corollary}\label{theo_robotic_sublinear}
    Under the assumptions of \cref{theo_sublinear} with sufficiently large $\rho$, the iterates of the ADMM applied to the problem in \eqref{robotic} satisfy 
$    L(x^k,z^k,w^k)-L(x^\star,z^\star,w^\star)\in o(1/k).
$
\end{corollary}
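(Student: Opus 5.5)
The plan is to obtain the corollary directly from \cref{theo_sublinear}, by checking that problem \eqref{robotic} is an instance of \eqref{problem} that satisfies all of that theorem's hypotheses. With $x=[x_0,\dots,x_T]$, $x_i=[\mathbf f_i^1,\dots,\mathbf f_i^N]$, and $z=[\mathbf{k'}_0,\dots,\mathbf{k'}_T]$, I will write each constraint as $A(x)+Qz=0$, where $Q$ is a nonsingular block-diagonal matrix (up to sign, $Q=-I$ acting on $z$).

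The first step is to verify that $A$ is a multi-affine quadratic operator in the sense of Definition \ref{def_biaffine}. Each component of the constraint for $\mathbf{k'}_{i+1}$ has the form $\sum_j (\mathbf r_i^j-\mathbf c_i(\mathbf f))\times \mathbf f_i^j\,\Delta t$. Here $\mathbf c_i(\mathbf f)$ is affine in $\mathbf f_0,\dots,\mathbf f_{i-2}$ only, by the explicit expression in Appendix \ref{sec:loco_app}. So the quadratic terms are bilinear products $\mathbf f_{i'}^l\times\mathbf f_i^j$ with $i'\le i-2<i$, that is, products between \emph{different} time blocks. Each coordinate is therefore $\frac12 x^TC x+d^Tx+e$ with symmetric $C$ whose diagonal blocks vanish, and it is affine in any single block $x_i$ when the others are fixed. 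The constants $\mathbf k_{\textit{init}}$, $\mathbf r$, $\mathbf c_{\textit{init}}$, $\dot{\mathbf c}_{\textit{init}}$ and $\mathbf g$ enter only into $d$ and $e$.

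The second step is to check \cref{ass_strongly_convex} and \cref{ass_A_inside_Q}. Since $Q=-I$ is square and invertible, it has full row rank, $\lambda^+_{\min}(Q^TQ)=\lambda_{\min}^+(QQ^T)=1$, and the threshold on $\rho$ in \cref{theo_sublinear} reduces to $\rho\ge 4L_\phi^2/\mu_\phi$. This is the meaning of ``sufficiently large $\rho$''. The running and terminal costs are assumed to decompose as $f(\mathbf f)+\phi(\mathbf{k'})$, and the hypotheses of \cref{theo_sublinear} are assumed in the statement. Hence $f$ is $C^2$ and strongly convex, and $\phi$ is $C^2$, strongly convex and $L_\phi$-smooth. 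For instance, this holds for quadratic tracking costs, after noting that $\mathbf c,\dot{\mathbf c}$ are affine in $\mathbf f$ and $\mathbf k$ is an invertible linear map (a cumulative sum) of $\mathbf{k'}$.

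The indicators $I_i$ are indicators of the polyhedral friction-cone approximations $\Omega_i^j$. These sets are convex and closed, and they are block-separable across the time blocks $x_i$, as required. For subanalyticity of $F=f+\sum_i I_i$, I will argue as follows. The indicator of a polyhedron is semialgebraic, so it is subanalytic. Sums of subanalytic functions are subanalytic when one of them is continuous, which holds here because $f$ is continuous. If $f$ is quadratic, as in practice, $F$ is in fact semialgebraic.

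The main obstacle is the bookkeeping in the first step. One must confirm that the substitution of $\mathbf c_i(\mathbf f)$ never creates a product within a single block, for instance $\mathbf f_i\times\mathbf f_i$ or $\mathbf f_{i-1}\times\mathbf f_i$ through $\dot{\mathbf c}$. The double sum stopping at $i'=i-2$ ensures this, and I would verify it carefully from the recursion $\mathbf c_{i+1}=\mathbf c_i+\dot{\mathbf c}_i\Delta t$. With all hypotheses verified, \cref{theo_sublinear} yields $L(x^k,z^k,w^k)-L(x^\star,z^\star,w^\star)\in o(1/k)$.
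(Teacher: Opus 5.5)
Your proposal is correct and takes the same route as the paper. You recast \eqref{robotic} as an instance of \eqref{problem} with $Q=\pm I$, so \cref{ass_A_inside_Q} holds and the threshold on $\rho$ reduces to $4L_\phi^2/\mu_\phi$, and then invoke \cref{theo_sublinear}; your check that the bilinear terms only couple time blocks $i'\le i-2$ with block $i$ is more explicit than the paper's sparsity description of $C_i$. The subanalyticity digression is unnecessary because it is among the assumed hypotheses, and as phrased it would also need $f$ itself to be subanalytic, not merely continuous.
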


% \begin{corollary}\label{theo_robotic_sublinear}
%     For 2-D locomotion problem related to \cref{robotic}, suppose \cref{ass_strongly_convex}, \cref{ass_smooth} are satisfied, $\rho\geq\frac{2L_f+3L_\phi}{\lambda_{min}(Q^TQ)}$,  then, the conditions in \cref{theo_sublinear} are satisfied and $(x^k,z^k,w^k)$ converge to $(x^\star,z^\star,w^\star)$, where $x_i^\star$ is the solution of the problem, 
%     \begin{equation*}
%         \min_{x_i}\{f(x_i,x_{-i}^\star)+I_i(x_i): A(x_i,x_{-i}^\star)+Q(z^\star)=0\},
%     \end{equation*}
%     for all $i$ and $z^\star$ is the solution of the problem,
%     \begin{equation*}
%         \min_{z}\{\phi(z): A(x^\star)+Qz=0\}.
%     \end{equation*}
% Furthermore, the sublinear convergence rate is guaranteed,
% \begin{equation*}
%     L(x^k,z^k,w^k)-L(x^\star,z^\star,w^\star)=o(\frac{1}{k}).
% \end{equation*}
% \end{corollary}

 We also extend the result of Theorem \ref{theo_linear} to the locomotion problem for which we require that the blocks of the initial point $x^0$, the global minimizer of $f(x)$ and $\phi(z)$, $x_f^\star$ and $\phi_z^\star$ are all bounded, i.e.,
% \begin{assumption}\label{coodinate_bound}
%     Every coordinate of the initial point $x^0$, the global minimizer $x_f^\star$ and $\phi_z^\star$ is bounded by radius that doesn't depend on $n_x$ and $n_z$.
% \end{assumption}
%A straightforward proposition for this assumption is that the $\|x_f^\star\|$ and $\|\phi_z^\star\|$ is also bounded, i.e.,
\begin{equation}\label{x_f_star_bounded}
   \|x^0\|^2, \|x_f^\star\|^2\in\mathcal{O}({n_x}),\quad \|z_\phi^\star\|^2\in\mathcal{O}({n_z}).
\end{equation}
This requirement holds in almost all physical problems. 
Note that \eqref{robotic} is a multi-affine quadratic constrained problem with the nonlinear term proportional to $(\Delta t)^3$. As $\Delta t\to 0$, the nonlinear term decays and subsequently, the linear convergence is guaranteed according to \cref{theo_linear}. 
 %We formalize it for the following corollary.
\begin{shadowboxtext}
\begin{corollary}\label{theo_linear_robotic}
    Under the assumptions of \cref{theo_robotic_sublinear}, if \eqref{x_f_star_bounded} holds and $L(x,z,w)$ is second-order differentiable at the limit point $(x^\star,z^\star,w^\star)$,
   %  \begin{equation*}
   %  \begin{aligned}
   %  \Delta t&=\mathcal{O}\left(\min\{\frac{\mu_f m}{N^3T_{total}^{5/2}(L_f+L_\phi)}, \frac{\mu_f^{3/5} m^{2/5}}{NT_{total}^{2}L_\phi^{2/5}(L_f^{1/2}+L_\phi^{1/2})^{2/5}}\right.,\\
   % &\left. \frac{\mu_f^{4/9}m^{1/3}}{N^{5/3}T_{total}^{5/3} L_\phi^{1/3}},\frac{\mu_f^{1/2} m^{1/3}}{N^{2}T_{total}^{5/2}L_\phi^{1/2}}\}\right).    
   %  \end{aligned}
   %  \end{equation*}
then there exists $c_3>1$ and $t_0>0$, such that the iterates of the ADMM applied to the problem in \eqref{robotic} with $\Delta t\leq t_0$ satisfy
\begin{equation*}
    L(x^k,z^k,w^k)-L(x^\star,z^\star,w^\star)\in \mathcal{O}(c_3^{-k}),
\end{equation*}
Furthermore, $(x^\star,z^\star)$ is a local minimum of problem \ref{robotic}.
\end{corollary}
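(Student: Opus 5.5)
Our approach is to reduce Corollary \ref{theo_linear_robotic} to Theorem \ref{theo_linear}. Concretely, we check that the reformulated locomotion problem \eqref{robotic} satisfies the smallness condition \eqref{Q_condition} once $\Delta t$ is small enough. Theorem \ref{theo_linear} then gives $L(x^k,z^k,w^k)-L(x^\star,z^\star,w^\star)\in\mathcal{O}(c_3^{-k})$ and local minimality. The remaining hypotheses of Theorem \ref{theo_linear} are already in place: Assumptions \ref{ass_strongly_convex} and \ref{ass_A_inside_Q}, $\rho$ large, $L_f$- and $L_\phi$-smoothness, and second-order differentiability of $L$ at the limit. They come from Corollary \ref{theo_robotic_sublinear} and the hypotheses of the statement. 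So the work is in reading off $Q$ and $\{C_i\}$ explicitly and controlling the constants $m_1,m_2,m_3$.

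\textbf{Step 1 (structure of $Q$).} In \eqref{robotic} the $z$-variable $\mathbf{k'}$ appears linearly with coefficient $-I$ in each constraint. Hence $Q$ is, up to sign, the identity on $\mathbb{R}^{n_z}$. This gives $\lambda_{\min}(QQ^T)=1$ and $\|(QQ^T)^{-1}Q\|=1$, both independent of $\Delta t$. The right-hand side of \eqref{Q_condition} is then $\Theta(\min\{m_1,m_2,m_3\})$.

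\textbf{Step 2 (size of $C$).} The only bilinear terms are
\[
-\sum_{i'\le i-2}(i-1-i')\,\frac{\mathbf{f}_{i'}^l}{m}(\Delta t)^2\times \mathbf{f}_i^j\,\Delta t .
\]
Each $C_i$ therefore has entries of order $(\Delta t)^3 T/m$ times bounded integers. Summing over the $N$ contacts and the at most $T$ earlier time steps gives $\|C\|\le c\,N^2T^2(\Delta t)^3/m$. With horizon $T_{total}=T\Delta t$ fixed, this is of order $N^2T_{total}^2\Delta t/m$, which tends to $0$ as $\Delta t\to0$. The linear parts $d_i$ involve $(\mathbf{r}_i^j-\mathbf{c}_{init}-\dot{\mathbf{c}}_{init}i\Delta t)\Delta t$ and stay bounded.

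\textbf{Step 3 (uniformity of $m_1,m_2,m_3$).} This is the main obstacle. The constants in Theorem \ref{theo_linear} depend on $L_f,\mu_f,L_\phi,\mu_\phi,\rho$, and presumably on bounds for the iterates: a radius containing $\{x^k,z^k,w^k\}$ obtained from Lagrangian descent, and bounds on $\|d_i\|$. These quantities could degrade as $\Delta t\to0$, because the dimensions $n_x,n_z$ grow like $T$.

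The plan is to use \eqref{x_f_star_bounded} to bound the relevant norms. The iterate bound from the proof of Theorem \ref{theo_sublinear} is controlled by $L(x^0,z^0,w^0)$ and the minimizers. Under \eqref{x_f_star_bounded}, it gives $\|x^k\|^2=\mathcal{O}(n_x)$ and $\|z^k\|^2=\mathcal{O}(n_z)$. Each $m_i$ would then be at worst polynomial in $T$ and hence in $1/\Delta t$, say $m_i\gtrsim (\Delta t)^{p_i}$. To close the argument we need $\|C\|=\mathcal{O}((\Delta t)^{3}T^2)$ to decay faster than $\min_i m_i$. Equivalently, we require $3-2>\max_i p_i$ after tracking how the per-coordinate bound enters; a crude bound may not give this. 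If so, the fallback is to exploit block structure. $C$ acts on $x$ only through the nonzero coordinates, and the iterate bounds enter per block through $\|x\|_\infty$, which is $\mathcal{O}(1)$ by \eqref{x_f_star_bounded}. This should remove the dimension dependence.

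Once $\|C\|\le \kappa\,\min\{m_1,m_2,m_3\}$ is verified for all $\Delta t\le t_0$, with $t_0$ chosen from these explicit bounds, Theorem \ref{theo_linear} yields the linear rate with some $c_3>1$ and shows that $(x^\star,z^\star)$ is a local minimum of \eqref{robotic}.
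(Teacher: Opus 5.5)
Your route is the paper's: verify \eqref{Q_condition} for the reformulated problem when $\Delta t$ is small, then invoke Theorem~\ref{theo_linear}. Step~1 ($Q=I$, so $\lambda_{\min}(QQ^T)=\|(QQ^T)^{-1}Q\|=1$) is right. The gap is that the decisive comparison in Steps~2--3 is never carried out, and the one concrete estimate you give is exactly too weak to close it.

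The constants in \eqref{Q_condition_proof0} are explicit, up to factors depending on $\mu_f,\mu_\phi,L_f,L_\phi$:
\begin{itemize}
\item $m_1\propto 1/\bigl(\sqrt{n_c}\,(\|x^0\|+\|x_f^\star\|+\|z_\phi^\star\|)\bigr)$;
\item $m_3\propto 1/\bigl(\sqrt{n_c}\,\|x^0\|\bigr)$;
\item $m_2\propto 1/\bigl(n_c(\|x^0\|\|d\|+\|e\|)\bigr)$.
\end{itemize}
The $n_c$ factors come from $\|\sum_i w_i^\star C_i\|\le\sqrt{n_c}\|w^\star\|\|C\|$. The norms come from bounding $\|w^\star\|$ via $L(x^\star,z^\star,w^\star)\le L(x^0,z^0,w^0)$ with a feasible initialization, not from a radius for the iterates. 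With $n_c=\Theta(T)$, $T=T_{total}/\Delta t$ and \eqref{x_f_star_bounded}, the only guaranteed lower bounds are $m_1,m_3\gtrsim\Delta t$. Your estimate $\|C\|\lesssim N^2T^2(\Delta t)^3/m=N^2T_{total}^2\Delta t/m$ is of the same order. So the requirement $\|C\|\lesssim\min\{m_1,m_2,m_3\}$ collapses to a $\Delta t$-independent inequality among $N,T_{total},m,\mu_f,\mu_\phi,L_f,L_\phi$ that need not hold, and shrinking $\Delta t$ does not help. In your notation, $p_1=p_3=1$ and you would need $1>1$.

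\textbf{The missing ingredient} is the sharper bound the paper uses. For fixed $i,j,l$, the terms $(i-1-i')\,\mathbf{f}^l_{i'}\times\mathbf{f}^j_i$ for different $i'$ occupy disjoint row blocks against the common column block of $\mathbf{f}_i$. So instead of the triangle inequality over $i'$, which costs $\sum_{i'}(i-1-i')\approx i^2/2$, bound the summed matrix by its Frobenius norm: $\bigl(4\sum_{i'=0}^{i-2}(i-1-i')^2\bigr)^{1/2}\le 2i^{3/2}$. This gives $\|C\|=\mathcal{O}(N^2T^{3/2}(\Delta t)^3/m)=\mathcal{O}(N^2T_{total}^{3/2}(\Delta t)^{3/2}/m)=o(\Delta t)$. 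Hence $\|C\|/\min_i m_i=\mathcal{O}((\Delta t)^{1/2})\to0$, which is what yields the threshold \eqref{conclusion}.

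You also need $\|d\|=\mathcal{O}(NT_{total}^2\Delta t)$, not merely ``bounded''. This uses the explicit $\Delta t$ prefactor and $|a_i^j|=\mathcal{O}(T_{total}^2)$, including the gravity sum you omitted. With $\|d\|=\mathcal{O}(1)$ you can only guarantee $m_2\gtrsim(\Delta t)^{3/2}$, which ties even with the sharp bound on $\|C\|$. With $\|d\|=\mathcal{O}(\Delta t)$, $m_2\gtrsim\Delta t$.

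Finally, the $\|x\|_\infty$ fallback is not available. \eqref{x_f_star_bounded} bounds squared Euclidean norms by $\mathcal{O}(n_x)$ and $\mathcal{O}(n_z)$, which gives no coordinatewise $\mathcal{O}(1)$ control. Moreover, the constants of Theorem~\ref{theo_linear} are stated in Euclidean norms, so exploiting $\ell_\infty$ information would require reproving that theorem rather than applying it.
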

\end{shadowboxtext}
%The exact expressions of the constants $\{y_i\}$ are provided 
In Appendix \ref{app:proofs_2}, we further extend the result of Theorem \ref{theo_linear_nondiff} to the locomotion problem when $L$ is not second-order differentiable at $(x^\star,z^\star,w^\star)$ and show that linear convergence remains achievable. 

\section{Experiments}\label{sec:experiments}

In this section, we first present a toy example to study the effect of the multi-affine constraint on the convergence rate of the ADMM. 
Next, we apply the ADMM algorithm to simplified 2D example of locomotion and dynamic locomotion.

\textbf{Effect of multi-affine quadratic constraint on the convergence rate:}
Recall that the constraint set of the problem in (\ref{problem}) consists of two parts: the multi-affine quadratic operator $A(\cdot)$, and the linear part represented by $Q$. 
According to Theorems \ref{theo_linear} and \ref{theo_linear_nondiff}, when the linearity in the constraint becomes dominant, it results in a linear convergence rate.
\begin{wrapfigure}{r}{0.45\textwidth}
\vspace{-.5cm}
\begin{minipage}{0.45\textwidth}
\begin{figure}[H]
    \centering
    \includegraphics[width=1\linewidth, height=.54\linewidth]{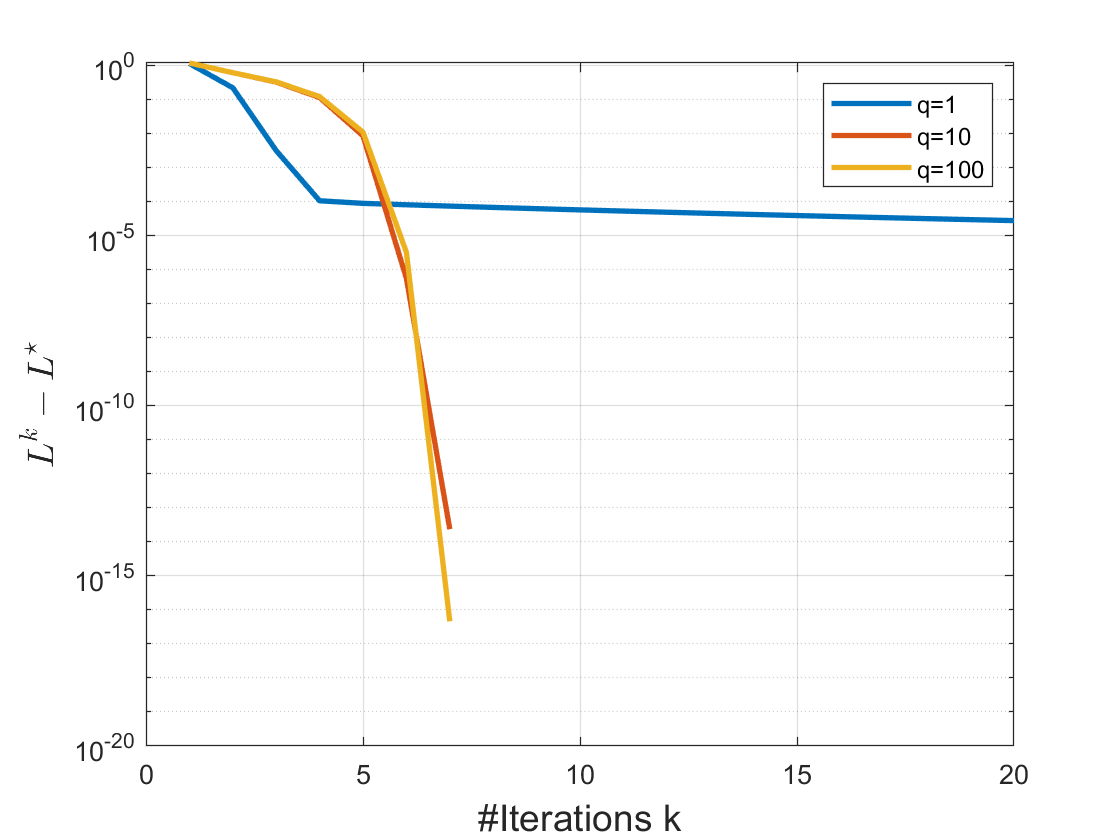}
    \vspace{-.5cm}
    \caption{Performance of the ADMM under the effect of nonlinearity.}
    \label{non-linearity}
\end{figure}
\end{minipage}
 \vspace{-.7cm}
\end{wrapfigure}
To study the effect of linearity in the constraint on the ADMM's convergence, we consider the following,
\begin{align*}
        \!\!\min_{\{x_i\},z} \frac{\mu_x}{2}(x_1^2+x_2^2+x_3^2+x_4^2)+\frac{\mu_z}{2}z^2,\\ 
        \mathrm{s.t.}\ \ x_1x_2-x_3x_4+qz+1=0.
\end{align*}
In this problem, the effect of non-linearity is quantified by the coefficient $q$. As $q$ becomes larger, it ensures the convergence rate is linear. Condition in (\ref{Q_condition}) suggests $q\geq 10$ to ensure linear convergence. This is illustrated in Figure \ref{non-linearity}, showing the convergence results of ADMM under different $q$.
\begin{wrapfigure}{r}{0.45\textwidth}
\vspace{-1cm}
\begin{minipage}{0.45\textwidth}
\begin{figure}[H]
    \centering
    \includegraphics[width=0.9\linewidth, height=.6\linewidth]{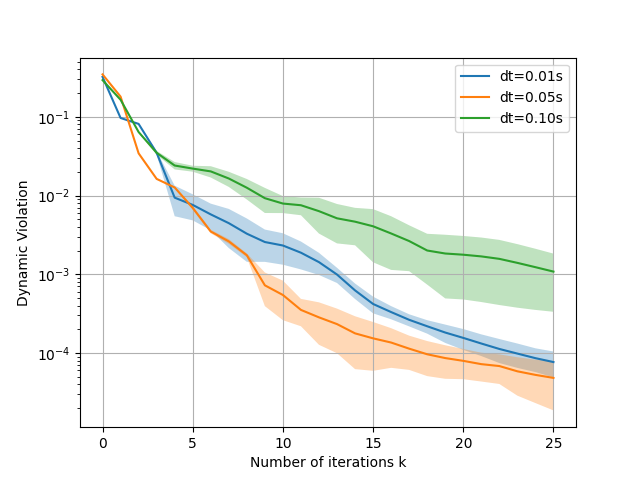}
    \vspace{-.5cm}
    \caption{Mean and standard deviation of dynamic violation values over optimization iterations. Results are shown for three different time discretizations. The x-axis shows the iteration number $k$.}
    \label{fig:rob_exp_dyn_violation}
\end{figure}
\end{minipage}
\vspace{-.5cm}
\end{wrapfigure}

\epstopdfsetup{outdir=./}
\begin{figure}
    \centering
    \includegraphics[width=.32\linewidth, height=.23\linewidth]{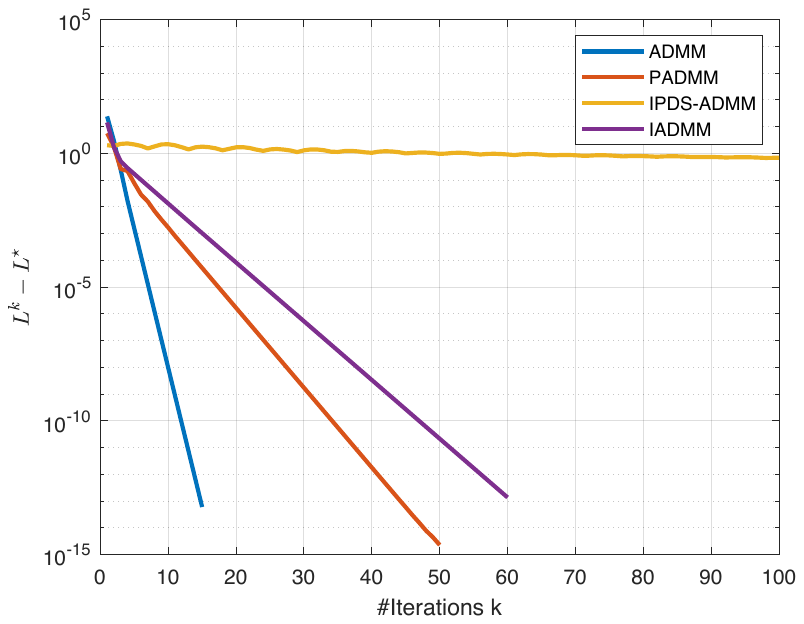}
    %\hspace{.3cm}
    \includegraphics[width=.32\linewidth, height=.23\linewidth]{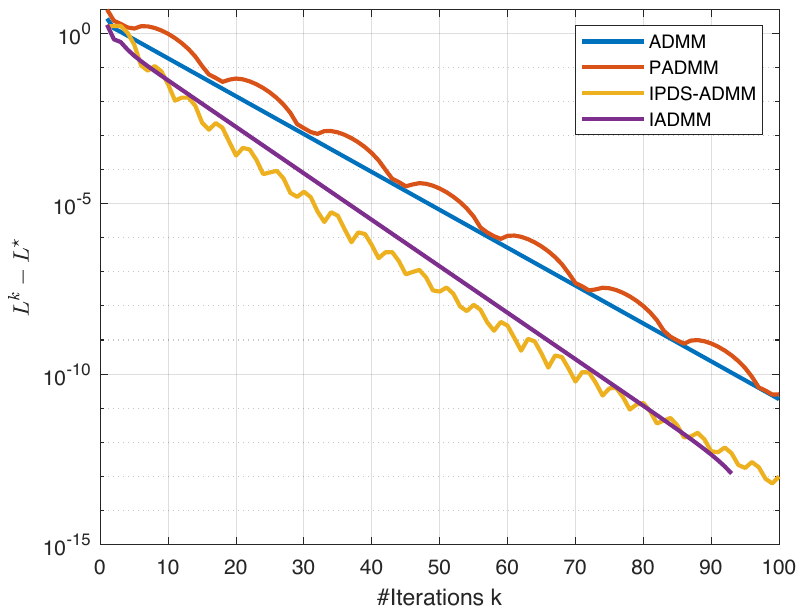}
    \subfigure{\includegraphics[width=.32\linewidth, height=.23\linewidth]{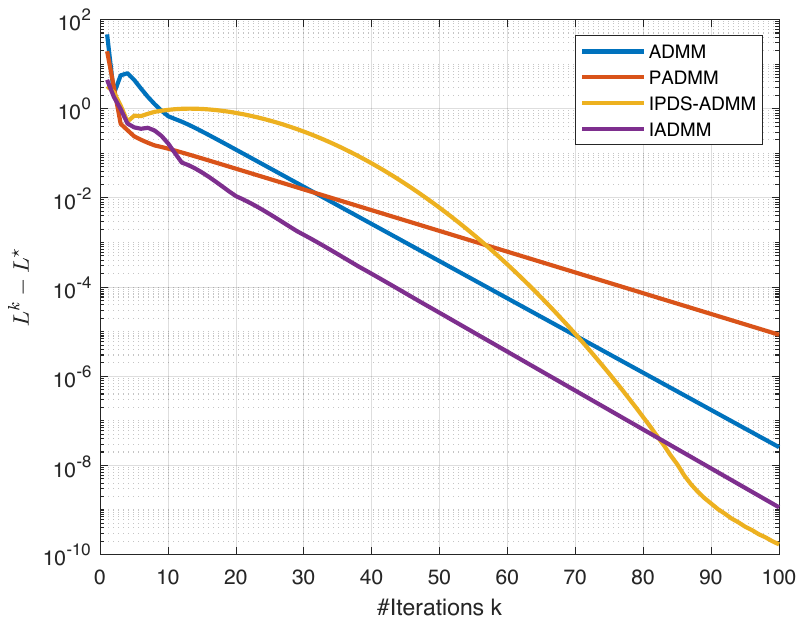}}
    \vspace{-.3cm}
    \caption{Left: convex objective with multi-affine constraint. Center: convex objective with linear constraint. Right: nonconvex objective with linear constraint} 
    \label{compare_state_of_the_art}
    \vspace{-.6cm}
\end{figure}
\textbf{Comparison with Existing Methods:}
To further demonstrate the effectiveness of our method, we conduct a comparative study against PADMM of \cite{yashtini2021multi}, IPDS-ADMM of \cite{yuan2024admm} and {IADMM from \cite{tang2024self}} under three scenarios: (i) convex objective with multi-affine constraints, (ii) convex objective with linear constraints, and (iii) nonconvex objective with linear constraints. {These methods aim to improve efficiency in linearly constrained problems. However, they currently lack theoretical guarantees when applied to nonlinear constrained problems.} As illustrated in \cref{compare_state_of_the_art}, our algorithm achieves superior performance when the constraints are nonlinear, while comparable performance in other settings. This highlights the robustness and the efficiency of our approach beyond the convex setting.

\textbf{2D Locomotion problem:}
Figure \ref{fig:2d_loco_manipulation} depicts a 2D locomotion problem in which the goal is to achieve smooth walking behaviors, potentially involving varying step lengths at different time steps. To demonstrate the performance of the ADMM algorithm for finding the optimal trajectories, i.e., $\{\mathbf{f}_i, \mathbf{k}_i\}$, we considered this 2D version of the problem in (\ref{eq:robotic1}).
\begin{figure}
    \centering
    \includegraphics[width=.22\linewidth, height=.13\linewidth]{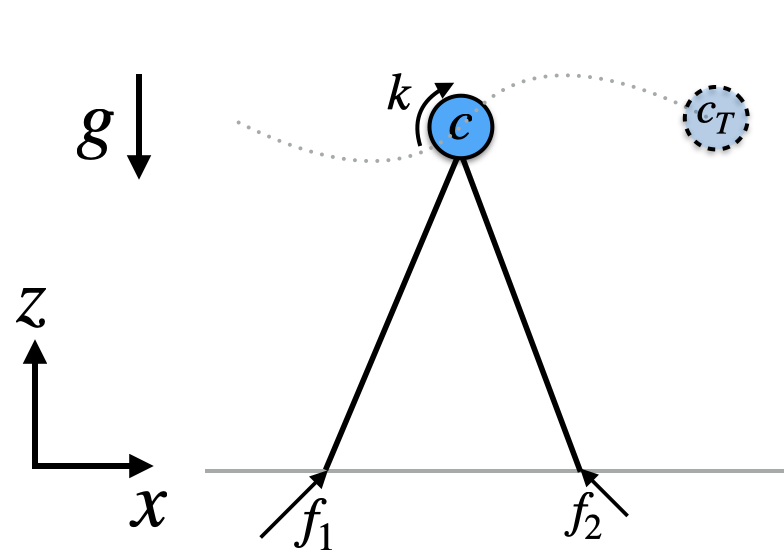}
    %\hspace{.3cm}
    \includegraphics[width=.38\linewidth, height=.23\linewidth]{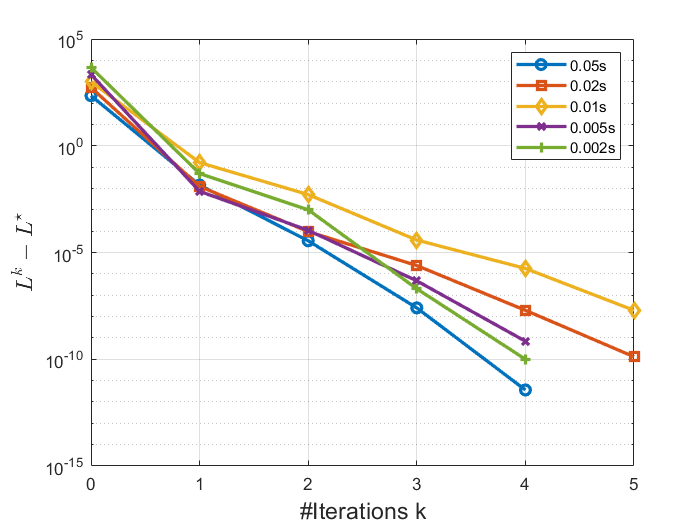}
    \subfigure{\includegraphics[width=.38\linewidth, height=.23\linewidth]{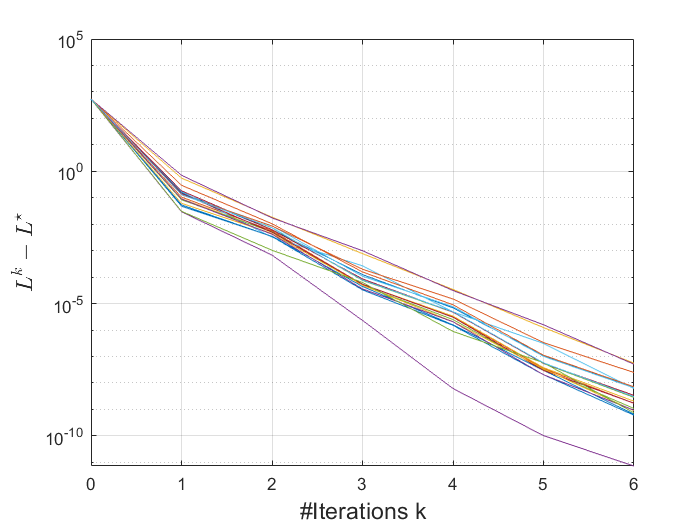}}
    \vspace{-.3cm}
    \caption{Left: Schematic of a 2D locomotion problem. {The robot has two contacts with friction $f_1$ and $f_2$. The location and angular momentum are $\mathbf{c}$ and $\mathbf{k}$.} Center: Performance of the ADMM for different  $\Delta t$. Right: Convergence rate of the ADMM for the 2D problem with random initialization.} \label{fig:2d_loco_manipulation}\label{fig:loco_delta_t}
    \vspace{-.4cm}
\end{figure}
% Smooth walking can be achieved with varying step lengths, i.e., $r_x$ is switched to $r_x+D$ after $M\Delta t$ time steps, where $D$ and $M$ can be chosen differently for each step to increase the variability in the motion. \textcolor{red}{delete} The maximum horizontal $l$, as well as the maximum and minimum vertical $h$ distance from the stance foot to the CoM are $l_{max} = 0.2$m, $h_{max} = 0.25$m, $h_{min} = 0.15$m.  Moreover, we varied the step length in the range $-0.15 \leq l \leq 0.15$. 

In this experiment, we selected a set of parameters that are close to a realistic application, namely, we set $m=2$ kg (small-size robot in \cite{grimminger2020open}). We used the cost terms $f_i(\mathbf{f}_i)=\frac{1}{2}\sum_{i=0}^T\|\mathbf{f}_i\|^2+I_i(\mathbf{f}_i)$, and $\phi(z)=5\sum_{i=0}^T\|k_i'\|^2$. The constraints on $\mathbf{f}$ are designed to ensure that the center of mass remains within a specified target area.

Figure \ref{fig:loco_delta_t}(center) illustrates the convergence rate for different discretization time values $\Delta t$. 
% To evaluate the performance of the ADMM algorithm, we plotted the error measured by $L^k-L^*$ at different iterations $k$.  
Note that the y-axis is on the log scale.  
As suggested by the result of Corollary \ref{theo_linear_robotic}, for small enough $\Delta t$, linear convergence is guaranteed by the ADMM. 
% To show this result empirically, we report the error at different steps for various $\Delta t$ in Figure \ref{fig:loco_delta_t}.
%While $\Delta t$ from Corollary \ref{theo_linear_robotic} is 0.005 sec, our empirical result in Fig. \ref{fig:loco_delta_t} suggests that the bounds introduced in Corollary \ref{theo_linear_robotic} are not sharp and even for larger $\Delta t$, ADMM achieves linear convergence rate. 
While $\Delta t = 0.005$ sec as suggested by Corollary \ref{theo_linear_robotic}, our empirical results in Fig. \ref{fig:loco_delta_t} indicate that the bound provided in the corollary is conservative. In practice, ADMM exhibits a linear convergence rate even for significantly larger values of $\Delta t$. 
As shown in Fig. \ref{fig:loco_delta_t}(right), ADMM consistently converges linearly regardless of the initial configuration. 

\textbf{Dynamic locomotion problem:}
Figure \ref{fig:robot_experiment_snapshots} depicts dynamic motions executed on a humanoid and quadrupedal robot. These motions can be described by a fixed contact sequence and transition times, which can be used to formulate \eqref{eq:robotic1}. The resulting CoM trajectory $(\mathbf{c}, \dot{\mathbf{c}}, \mathbf{k})$ can then be tracked via a kinematics optimization in order be applied on a robotic system as depicted in the figure.

In this experiment, we show successful transfer of the centroidal trajectories found using  \eqref{eq:robotic1} or its equivalent in \eqref{robotic} via Algorithm \ref{alg:admm} to high-dimensional robotics systems. The kinematics optimization is executed using an open source implementation of Differential Dynamic Programming (DDP) \cite{mastalli20crocoddyl}. We report the centroidal dynamics constraint violation per iteration of Algorithm \ref{alg:admm} for the jumping motion of the humanoid for three different discretization values $\Delta t$. The results are depicted in Figure \ref{fig:rob_exp_dyn_violation}, displaying the mean and standard deviation for each $\Delta t$ over 10 trials with randomized initial conditions.
\begin{figure*}
    \centering
    \includegraphics[width=1\linewidth, height=.35\linewidth]{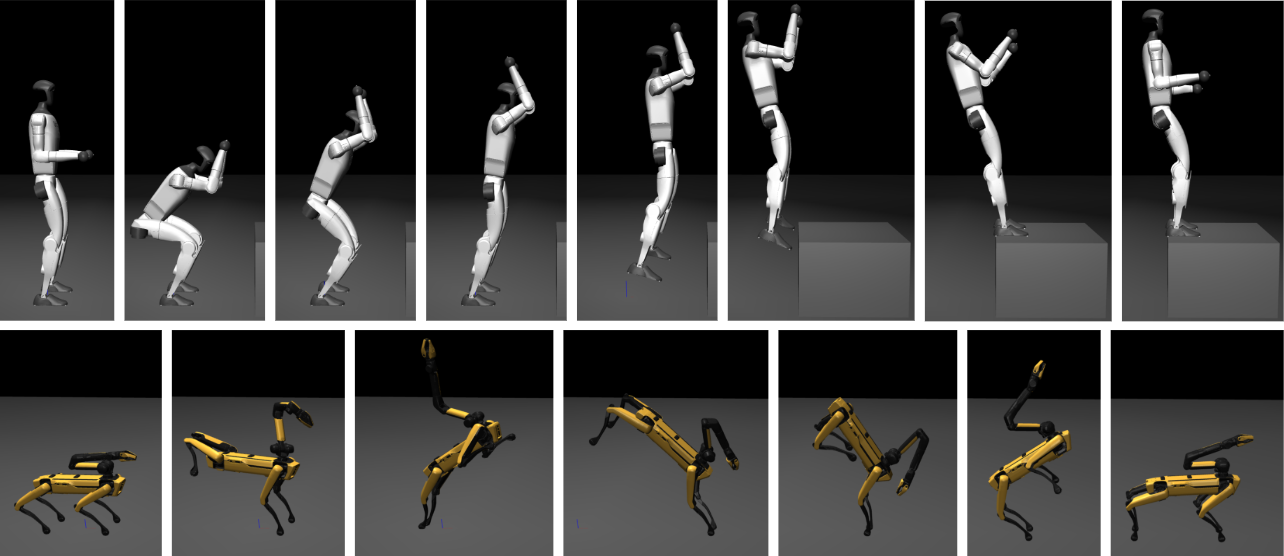}
    \vspace{-.4cm}
    \caption{Snapshots of the robot experiments. The top row shows a humanoid robot performing a vertical jump. The bottom row illustrates a quadruped robot executing a bounding gait. In both cases, centroidal trajectories and forces are found using \eqref{eq:robotic1}, and then a kinematic optimization tracks the planned centroidal trajectory.}
    \label{fig:robot_experiment_snapshots}
    \vspace{-.7cm}
\end{figure*}

\vspace{-.4cm}
\section{Conclusion} \label{sec:conclusion}
\vspace{-.4cm}
 In this paper, we provided theoretical guarantees for the convergence rate of ADMM when applied to a class of multi-affine quadratic equality-constrained problems. We proved that the sublinear convergence of the Lagrangian always holds, and every block of the limit point is the optimal solution when other blocks are fixed. We further proved that when the degree of non-convexity, measured by $\|C\|$, is small enough, the convergence will be linear. In addition, the limit point is a local minimum of the problem. Moreover, we applied our result to the locomotion problem in robotics. Our experimental results validated the correctness and robustness of our theorem. In the future, we plan to extend our results with higher-order non-linearity in the constraints and perform an extensive experiment on real-world applications.

% \section{Reproducibility statement}
% The main paper specifies the problem formulation (Section 2) and theoretical guarantees (Section 3). Details of the algorithm, assumptions, and proofs are provided in the appendix. The robotics application (Section 4) and experiments (Section 5) are described with sufficient information for implementation, and additional details are included in the appendix and the supplementary material. 

\bibliography{iclr2026_conference}
\bibliographystyle{iclr2026_conference}

\appendix

\begin{center}
    \begin{large}
        \textbf{Appendix}
    \end{large}
\end{center}

\section{Technical Definitions and Lemmas}\label{app:defs}

\begin{definition}
Let $f: \mathbb{R}^n \rightarrow \mathbb{R} \cup\{\infty\}$ and $x \in \operatorname{dom}(f)$ . A vector $v$ is a regular subgradient of $f$ at $x$, indicated by $v \in \widehat{\partial} f(x)$, if $f(y) \geq f(x)+\langle v, y-x\rangle+o(\|y-x\|)$ for all $y \in \mathbb{R}^n$. A vector $v$ is a general subgradient, indicated by $v \in \partial f(x)$, if there exist sequences $x_n \rightarrow x$ and $v_n \rightarrow v$ with $f\left(x_n\right) \rightarrow f(x)$ and $v_n \in \widehat{\partial} f\left(x_n\right)$.
\end{definition}

\begin{definition}[Subanalytic set]
A subset $V\subset\mathbb{R}^n$ is called \emph{subanalytic} if for every point $x\in\mathbb{R}^n$ there exist
\\
  - an open neighborhood $U\subset\mathbb{R}^n$ of $x$,
  \\
  - a real-analytic manifold $M$ of dimension $n+m$,
  \\
  - a relatively compact semianalytic set $S\subset M$,
  \\
and a real-analytic projection map $\pi\colon M\to\mathbb{R}^n$ such that
$
  V\cap U \;=\;\pi\bigl(S\bigr)\,\cap\,U.
$
\end{definition}

\begin{definition}[Subanalytic function]
Let $U\subset\mathbb{R}^n$ be open.  An extended-real-valued function
$
  f\colon U \;\longrightarrow\;\mathbb{R}\cup\{\pm\infty\}
$
is called \emph{subanalytic} if its graph
$
  \Gamma_f
  \;=\;
  \bigl\{(x,y)\in U\times\mathbb{R} : y=f(x)\bigr\}
$
is a subanalytic subset of $\mathbb{R}^{n+1}$.
\end{definition}

\begin{definition}(KL property \cite{boct2020proximal})
    The function $\Psi$ is said to have the Kurdyka--\L{}ojasiewicz (KL) property at a point 
    $\hat u \in \operatorname{dom}\partial\Psi := \{u\in\mathbb{R}^N : \partial\Psi(u)\neq\emptyset\}$,
    if there exist $\eta\in(0,+\infty]$, a neighborhood $U$ of $\hat u$ and a function 
    $\varphi\in\Phi_\eta$ such that for every
    \[
        u \in U \cap [\,\Psi(\hat u) < \Psi(u) < \Psi(\hat u)+\eta\,]
    \]
    it holds
    \[
        \varphi'(\Psi(u)-\Psi(\hat u))\cdot \operatorname{dist}(0,\partial\Psi(u)) \ge 1,
    \]
    where $\Phi_\eta$ is the set of all concave and continuous
functions $\varphi: [0,\eta) \to [0,+\infty)$ which satisfy the following conditions:
\begin{enumerate}
    \item $\varphi(0)=0$;
    \item $\varphi$ is $C^1$ on $(0,\eta)$ and continuous at $0$;
    \item for all $s\in(0,\eta):\ \varphi'(s) > 0$.
\end{enumerate}
If $\phi(x)=Cx^{(\alpha-1)/\alpha}$ in the KL property, where $C$ is a positive constant, then it is equivalent with $\alpha$-PL condition in \cref{def:kl}.
\end{definition}

\begin{definition}
A polyhedral set is a set which can be expressed as the intersection of a finite set of closed half-spaces, i.e., $\{x\in\mathbb{R}^n|Ax\leq b\}$ as for some matrix $A\in\mathbb{R}^{m\times n}$ and vector $b\in\mathbb{R}^m$.
\end{definition}

% \begin{definition}
% The Clarke subdifferential $ \partial^{\circ} f(x)$ of  $f$  at  $x$ is the set \\
% \begin{equation*}
% \partial^{\circ} f(x)=\left\{\begin{array}{cl}
% \overline{\operatorname{co}}\left\{\partial f(x)+\partial^{\infty} f(x)\right\}, & \text { if } x \in \operatorname{dom} f \\
% \emptyset, & \text { if } x \notin \operatorname{dom} f  
% \end{array}\right.
% \end{equation*}
% From the above definitions it follows directly that for all  $x \in \mathbb{R}^n$, one has
% $\partial f(x) \subset \partial^{\circ} f(x)$.
% \end{definition}

\begin{lemma}\label{ass_subgradient_combination}\cite{rockafellar2009variational}
If $f=g+f_0$ with $g$ finite at $\bar{x}$ and $f_0$ smooth on a neighborhood of $x$, then  $\partial f(x)=\partial g(x)+\nabla f_0(x)$.
\end{lemma}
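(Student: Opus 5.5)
The plan is to prove the identity first for regular subgradients $\widehat{\partial}$ at every point of the neighborhood where $f_0$ is smooth, and then pass to limits to obtain the statement for general subgradients $\partial$. Throughout, ``smooth'' is read as continuously differentiable on an open neighborhood $U$ of $x$. Hence $f_0$ is finite and continuous on $U$, and $\nabla f_0$ is continuous on $U$. In particular $f$ is finite at a point $y\in U$ exactly when $g$ is, and $g=f-f_0$ on $U$.

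\emph{Step 1 (regular subgradients).} Fix $y\in U$ with $g(y)$ finite. Differentiability of $f_0$ at $y$ gives
$$f_0(y')=f_0(y)+\langle \nabla f_0(y),y'-y\rangle+o(\|y'-y\|).$$
If $v\in\widehat{\partial} g(y)$, add this expansion to the defining inequality $g(y')\ge g(y)+\langle v,y'-y\rangle+o(\|y'-y\|)$. The result is $f(y')\ge f(y)+\langle v+\nabla f_0(y),y'-y\rangle+o(\|y'-y\|)$, so $v+\nabla f_0(y)\in\widehat{\partial} f(y)$. Conversely, apply the same argument to $g=f+(-f_0)$, where $-f_0$ is also $C^1$ on $U$. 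This yields $\widehat{\partial} f(y)=\widehat{\partial} g(y)+\nabla f_0(y)$ for every $y\in U$. Only pointwise differentiability at $y$ is used here, so no uniformity of the $o(\cdot)$ term is needed.

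\emph{Step 2 (limiting subgradients).} Let $v\in\partial f(x)$. By definition there are sequences $x_n\to x$ and $v_n\to v$ with $f(x_n)\to f(x)$ and $v_n\in\widehat{\partial} f(x_n)$. Eventually $x_n\in U$, so Step 1 gives $u_n:=v_n-\nabla f_0(x_n)\in\widehat{\partial} g(x_n)$. Continuity of $\nabla f_0$ gives $u_n\to v-\nabla f_0(x)$. Continuity of $f_0$ gives $g(x_n)=f(x_n)-f_0(x_n)\to f(x)-f_0(x)=g(x)$. Therefore $v-\nabla f_0(x)\in\partial g(x)$, which proves $\partial f(x)\subseteq\partial g(x)+\nabla f_0(x)$. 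The reverse inclusion follows by the identical argument with the roles of $f$ and $g$ swapped and $f_0$ replaced by $-f_0$.

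\emph{Main obstacle.} The only delicate point is Step 2. The definition of $\partial$ requires function-value convergence $f(x_n)\to f(x)$, not just $x_n\to x$, so this attentive convergence must transfer between $f$ and $g$. That transfer uses the continuity of $f_0$. The gradient shift $\nabla f_0(x_n)\to\nabla f_0(x)$ in the subgradient sequence uses the continuity of $\nabla f_0$, which is exactly why $f_0$ must be $C^1$ near $x$ and not merely differentiable at $x$. Everything else is the routine first-order expansion in Step 1.
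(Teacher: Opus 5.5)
Your proof is correct. The paper gives no proof of this lemma; it imports it from Rockafellar--Wets (Exercise 8.8(c)), and your two-step argument is the standard derivation behind that result: the identity for regular subgradients from the first-order expansion of $f_0$ at each point of $U$, then passage to limiting subgradients, using continuity of $f_0$ to turn $f(x_n)\to f(x)$ into $g(x_n)\to g(x)$ and continuity of $\nabla f_0$ to shift the subgradient sequence.
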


\begin{lemma}\label{ass_subgradient_convex}\cite{rockafellar2009variational}
For any proper, convex function $f: \mathbb{R}^n \rightarrow \overline{\mathbb{R}}$ and any point $\bar{x} \in \operatorname{dom} f$, one has
\begin{equation*}
    \partial f(\bar{x})=\{v \mid f(x) \geq f(\bar{x})+\langle v, x-\bar{x}\rangle \text { for all } x\}.
\end{equation*}
\end{lemma}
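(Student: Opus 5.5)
The plan is to prove that, for a proper convex $f$, both the regular subdifferential $\widehat{\partial} f(\bar x)$ and the general subdifferential $\partial f(\bar x)$ coincide with the convex subdifferential
\[
S(\bar x):=\{v \mid f(x)\ge f(\bar x)+\langle v,x-\bar x\rangle \text{ for all } x\}.
\]
I will show three inclusions: $S(\bar x)\subseteq\widehat{\partial} f(\bar x)$, then $\widehat{\partial} f(\bar x)\subseteq S(\bar x)$ at every point of $\operatorname{dom} f$, and finally that passing to the limiting (general) subgradients adds nothing new.

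\textbf{Step 1: $S(\bar x)\subseteq\widehat{\partial} f(\bar x)\subseteq\partial f(\bar x)$.} If $v\in S(\bar x)$, the defining inequality of a regular subgradient holds with the $o(\|y-\bar x\|)$ term identically zero, so $v\in\widehat{\partial} f(\bar x)$. Taking the constant sequences $x_n=\bar x$ and $v_n=v$, which trivially satisfy $f(x_n)\to f(\bar x)$, then gives $v\in\partial f(\bar x)$.

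\textbf{Step 2: $\widehat{\partial} f(x)\subseteq S(x)$ for every $x\in\operatorname{dom} f$.} This is the step where convexity is used, and it is the main point of the argument. Take $v\in\widehat{\partial} f(x)$ and any $y$; if $f(y)=+\infty$ there is nothing to prove. Otherwise set $y_t=x+t(y-x)$ for $t\in(0,1)$. Convexity gives
\[
f(y_t)\le (1-t)f(x)+t f(y),
\]
while regularity gives
\[
f(y_t)\ge f(x)+t\langle v,y-x\rangle+o(t\|y-x\|).
\]
Combining the two, subtracting $f(x)$, dividing by $t$ and letting $t\downarrow 0$ yields $f(y)-f(x)\ge\langle v,y-x\rangle$. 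The only care needed is that the little-$o$ term is taken along the fixed ray $y_t\to x$, which is exactly what the definition of a regular subgradient provides.

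\textbf{Step 3: $\partial f(\bar x)\subseteq S(\bar x)$.} Let $v\in\partial f(\bar x)$, witnessed by $x_n\to\bar x$, $v_n\to v$ with $f(x_n)\to f(\bar x)$ and $v_n\in\widehat{\partial} f(x_n)$. By Step 2 applied at $x_n$, for every $y$ we have
\[
f(y)\ge f(x_n)+\langle v_n,y-x_n\rangle .
\]
Letting $n\to\infty$ and using $f(x_n)\to f(\bar x)$ (finite, since $\bar x\in\operatorname{dom} f$) gives $f(y)\ge f(\bar x)+\langle v,y-\bar x\rangle$, so $v\in S(\bar x)$. This is where the requirement $f(x_n)\to f(\bar x)$ in the definition of a general subgradient is essential; lower semicontinuity of $f$ is never needed.

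Combining the three steps gives $\widehat{\partial} f(\bar x)=\partial f(\bar x)=S(\bar x)$, which is the claim.
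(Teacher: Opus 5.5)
Your proof is correct and complete. The three steps are: the trivial inclusion $S(\bar x)\subseteq\widehat{\partial} f(\bar x)\subseteq\partial f(\bar x)$; the secant argument along $x+t(y-x)$, which shows $\widehat{\partial} f(x)\subseteq S(x)$ at every $x\in\operatorname{dom} f$, with properness supplying the finiteness of $f(x)$ and $f(y)$ needed to subtract and divide; and the limit passage using $f(x_n)\to f(\bar x)$. Together they give $\widehat{\partial} f(\bar x)=\partial f(\bar x)=S(\bar x)$.

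The paper offers no proof of its own. It quotes the lemma verbatim from Rockafellar--Wets (Proposition 8.12 there) and uses it as a black box, for example to write the subgradient inequality for the indicators $I_i$ in part (i) of the proof of Theorem~\ref{theo_sublinear}. The citation buys brevity and access to the surrounding textbook theory. Your route buys self-containedness, since it uses only the definitions of regular and general subgradients recorded in the paper's Appendix~A. It also makes explicit two points the citation leaves implicit. First, for convex $f$ the regular and limiting subdifferentials coincide. Second, no lower semicontinuity is needed, because the convergence $f(x_n)\to f(\bar x)$ built into the definition of $\partial f$ is what carries the global subgradient inequality to the limit.
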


\begin{lemma}[\cite{gao2020admm}]\label{bound_z_tilde_z}
    Let $h$ be a $\mu$-strongly convex and $L$-smooth function, $A$ be a linear map of $x$, and $\mathcal{C}$ be a closed and convex set. Let $b_1, b_2 \in \operatorname{Im}(A)$, and consider the sets $\mathcal{U}_1=\left\{x: A x+b_1 \in \mathcal{C}\right\}$ and $\mathcal{U}_2=\left\{x: A x+b_2 \in\right.$ $\mathcal{C}\}$, which we assume to be nonempty. Let $x^*=\operatorname{argmin}\left\{h(x): x \in \mathcal{U}_1\right\}$ and $y^*=\operatorname{argmin}\{h(y):$ $\left.y \in \mathcal{U}_2\right\}$. Then, $\left\|x^*-y^*\right\| \leq \frac{1+\frac{2L}{\mu}}{\sqrt{\lambda_{\min}^+(AA^T)}}\left\|b_2-b_1\right\|$.
\end{lemma}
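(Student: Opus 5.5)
The plan is a translation argument: shifting the offset $b$ shifts the feasible set by a fixed vector. This reduces the claim to a perturbation bound for minimizing a shifted strongly convex function over one fixed convex set.

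\textbf{Reduction to one feasible set.} Existence and uniqueness of $x^*$ and $y^*$ are immediate: $h$ is strongly convex, and each $\mathcal{U}_j$ is nonempty, closed and convex, being the preimage of the closed convex set $\mathcal{C}$ under the affine map $x\mapsto Ax+b_j$. Since $b_1-b_2\in\operatorname{Im}(A)$, set $d:=A^{+}(b_1-b_2)$, the minimum-norm solution of $Ad=b_1-b_2$. Then $d$ lies in the row space of $A$ (that is, $\operatorname{Im}(A^T)$), so
\[
\|d\|\le \frac{\|b_2-b_1\|}{\sqrt{\lambda_{\min}^+(AA^T)}} .
\]
For this $d$ we have $x\in\mathcal{U}_1$ iff $A(x+d)+b_2=Ax+b_1\in\mathcal{C}$, i.e. $\mathcal{U}_2=\mathcal{U}_1+d$. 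Consequently $y^*=u^*+d$, where $u^*=\operatorname{argmin}\{h(u+d):u\in\mathcal{U}_1\}$.

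\textbf{Comparing the two minimizers over $\mathcal{U}_1$.} Both $x^*$ and $u^*$ minimize a strongly convex function over the same convex set $\mathcal{U}_1$. The first-order optimality conditions (the subgradient characterization of Lemma~\ref{ass_subgradient_convex}, applied to $h$ plus the indicator of $\mathcal{U}_1$) give
\[
\langle\nabla h(x^*),u^*-x^*\rangle\ge0,\qquad \langle\nabla h(u^*+d),x^*-u^*\rangle\ge0 .
\]
Adding these two inequalities and splitting $\nabla h(u^*+d)=\bigl(\nabla h(u^*+d)-\nabla h(u^*)\bigr)+\nabla h(u^*)$ yields
\[
\langle \nabla h(u^*)-\nabla h(x^*),u^*-x^*\rangle\le -\langle \nabla h(u^*+d)-\nabla h(u^*),u^*-x^*\rangle .
\]
By $\mu$-strong monotonicity of $\nabla h$, the left-hand side is at least $\mu\|u^*-x^*\|^2$. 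By $L$-smoothness and Cauchy--Schwarz, the right-hand side is at most $L\|d\|\,\|u^*-x^*\|$. Hence $\|u^*-x^*\|\le (L/\mu)\|d\|$.

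\textbf{Conclusion.} The triangle inequality gives
\[
\|y^*-x^*\|\le\|u^*-x^*\|+\|d\|\le\Bigl(1+\tfrac{L}{\mu}\Bigr)\|d\| .
\]
Combined with the bound on $\|d\|$, this is at most $\bigl(1+\tfrac{2L}{\mu}\bigr)\|b_2-b_1\|/\sqrt{\lambda_{\min}^+(AA^T)}$, which is the claimed (looser) bound.

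\textbf{Main obstacle.} The step that needs care is the norm bound on $d$. One must use that $b_1-b_2$ lies in $\operatorname{Im}(A)$ and choose the pseudo-inverse solution, so that only the smallest \emph{positive} eigenvalue of $AA^T$ enters. The remaining steps are the standard variational-inequality comparison.
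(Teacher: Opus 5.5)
Your proof is correct. The paper does not prove this lemma; it is quoted from \cite{gao2020admm} without proof, so there is no in-paper argument to compare against. Your argument is complete and self-contained. You translate the feasible set, $\mathcal{U}_2=\mathcal{U}_1+d$ with $d=A^{+}(b_1-b_2)$, and bound $\|d\|\le\|b_2-b_1\|/\sqrt{\lambda_{\min}^+(AA^T)}$ using $d\in\operatorname{Im}(A^T)$. The variational-inequality comparison then gives $\|y^*-x^*\|\le(1+L/\mu)\|d\|$, which is sharper than the stated constant $1+2L/\mu$.

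A small refinement you might note: your two optimality inequalities are exactly $\langle\nabla h(x^*),y^*-d-x^*\rangle\ge 0$ and $\langle\nabla h(y^*),x^*+d-y^*\rangle\ge 0$. If you add them and group around $x^*-y^*$ rather than around $u^*=y^*-d$, you get $\mu\|x^*-y^*\|^2\le\langle\nabla h(x^*)-\nabla h(y^*),-d\rangle\le L\|x^*-y^*\|\,\|d\|$. This removes the triangle-inequality step and improves the constant further to $L/\mu$.
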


\begin{theorem}[\cite{bolte2007clarke}]\label{lower_semicontinuous_clarke}
Assume that $f: \mathbb{R}^n \rightarrow \mathbb{R} \cup\{+\infty\}$ is a lower semi-continuous globally sub-analytic function and $f\left(x_0\right)=0$, where $\mathbf{0}\in \partial f(x_0)$. Then, there exist $\delta>0$ and $\theta \in[0,1)$ such that for all $x \in|f|^{-1}(0, \delta)$, we have
\begin{equation*}
|f(x)|^\theta \leq \rho\left\|x^*\right\|, \quad \text { for all } x^* \in \partial f(x).
\end{equation*}
\end{theorem}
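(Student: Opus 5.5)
The plan is to reduce the inequality to a one-variable statement and conclude with the Puiseux expansion of a subanalytic function of one real variable, following Bolte, Daniilidis and Lewis. Since the statement is local, I fix a compact ball $\bar{\mathcal B}(x_0;r)$ and work with the restriction of $f$ to it. This keeps every set that appears globally subanalytic. It also makes $f$ bounded below there, by lower semicontinuity on a compact set. The constant written $\rho$ in the statement is just some positive constant, to be produced at the end.

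First I would show that the limiting subdifferential $\partial f$ has a globally subanalytic graph. The Fréchet subdifferential $\widehat\partial f$ is defined by a first-order formula over the subanalytic graph of $f$. The \emph{o}-term can be replaced by quantifiers over $\varepsilon>0$ and $\delta>0$, so its graph is subanalytic. The limiting subdifferential is obtained by a closure operation on $\{(x,f(x),v): v\in\widehat\partial f(x)\}$ followed by a projection, so its graph is subanalytic as well. With this in hand I define, for $s\in(0,\delta_0)$,
\begin{equation*}
\psi(s):=\inf\big\{\|x^*\| : x\in\bar{\mathcal B}(x_0;r),\ |f(x)|=s,\ x^*\in\partial f(x)\big\}.
\end{equation*}
This is a globally subanalytic function of one variable, because it is again defined by projections and an infimum over a subanalytic set.

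Next I would show that $\psi(s)>0$ for all small $s>0$. This amounts to saying that the nonzero critical values of $f$ do not accumulate at $0$, and I expect it to be the main obstacle. Suppose instead that there are points $x_k\to x_0$ with $f(x_k)\neq0$, $f(x_k)\to0$, and $\mathrm{dist}(0,\partial f(x_k))\to0$. I would first apply the curve selection lemma to the subanalytic set of pairs $(x,x^*)$ with $x^*\in\partial f(x)$ and $|f(x)|$, $\|x^*\|$ small. This gives a real-analytic curve $t\mapsto(\gamma(t),v(t))$ with $v(t)\in\partial f(\gamma(t))$, $\gamma(0)=x_0$ and $\|v(t)\|\le\kappa(t)\to0$. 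The difficulty is the nonsmooth chain rule. After shrinking the interval, subanalytic stratification (projection formula) lets me assume that $\gamma$ stays in a single stratum on which $f$ is analytic and $(f\circ\gamma)'(t)=\langle v(t),\gamma'(t)\rangle$. I then choose the curve so that this inner product is controlled by $\|v(t)\|\,\|\gamma'(t)\|$, and want $\psi$ to vanish identically rather than just tend to zero. If $\psi$ vanished identically near $0$, one could pick $v(t)\equiv$ a critical subgradient, so $f\circ\gamma$ would be constant on the stratum. It would then equal $f(x_0)=0$ by continuity along the stratum closure. This contradicts $|f(\gamma(t))|>0$. Hence $\psi$ is not identically zero near $0$, and since the zero set of a one-variable subanalytic function is a finite union of points and intervals, $\psi>0$ on some interval $(0,\delta)$.

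Finally, on $(0,\delta)$ the Puiseux/monotonicity lemma for subanalytic functions of one variable gives $\psi(s)=c\,s^{\beta}(1+o(1))$ with $c>0$ and rational $\beta\ge0$. If $\beta<1$, I set $\theta=\beta$. If $\beta\ge1$, I would seek a contradiction by integrating along a gradient-like curve. Concretely, $s\mapsto s^{-\beta}$ is not integrable at $0$, while the length of a subgradient curve reaching level $0$ from level $s$ must be finite. As a safer alternative for this case, note that if $\beta\ge1$ the bound trivially implies $s^{\theta}\lesssim\psi(s)$ for any $\theta\in[\beta,\infty)$ on bounded $s$; this only fails because $\theta<1$ is required, so the contradiction argument is really what is needed here. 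In the case $\beta<1$, taking $\theta=\beta$ and $\rho=2/c$ gives $|f(x)|^\theta\le\rho\|x^*\|$ for every $x$ with $0<|f(x)|<\delta$ in the ball and every $x^*\in\partial f(x)$. This is the claimed inequality.
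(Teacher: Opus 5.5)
The paper gives no proof of this statement; it is quoted from Bolte et al. So I judge your outline on its own. You have the right ingredients: definability of $\partial f$, the marginal function $\psi$, curve selection, and the projection formula $(f\circ\gamma)'=\langle v,\gamma'\rangle$ on a stratum. But neither decisive step is carried out correctly.

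\textbf{Positivity of $\psi$.} You pass from $\psi\equiv0$ near $0$ to a \emph{critical} subgradient at each level, i.e.\ you assume the infimum defining $\psi$ is attained. For lsc $f$ it need not be. $\partial f$ is closed only under $f$-attentive convergence, so if $x_k$ at level $s$ with $x_k^*\to0$ converge to $\bar x$, lower semicontinuity gives only $f(\bar x)\le s$. Two further problems: $\psi$ is an infimum over the whole ball, so these points need not approach $x_0$; and ``continuity along the stratum closure'' is unavailable for lsc $f$. The repair is to carry $y=f(x)$ as a coordinate. The same point affects your Step 1: intersect the closure with $\{y=f(x)\}$ before projecting. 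If $\psi\equiv0$ near $0$, the bounded set $\{(x,y,v): x\in\bar{\mathcal B}(x_0;r),\ y=f(x),\ 0<|y|<\varepsilon,\ v\in\partial f(x),\ \|v\|\le|y|\}$ accumulates at some $(\bar x,0,0)$. An analytic curve selected there has $y(t)=ct^{\alpha}+O(t^{\alpha+1})$ with $c\neq0$, $\alpha\ge1$ an integer, and $\|\gamma'\|\le K$. For small $t$, $\tfrac12|c|\alpha t^{\alpha-1}\le|y'(t)|\le K\|v(t)\|\le K|y(t)|\le 2K|c|t^{\alpha}$, which is a contradiction.

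\textbf{The bound $\theta<1$.} This is the more serious gap. Your argument for $\beta\ge1$ is circular: finite length of subgradient curves is what a \L{}ojasiewicz/KL inequality is used to \emph{prove}. Nothing in your setup even produces such a curve for lsc $f$. It also points the wrong way. Along a subgradient curve you only get the \emph{upper} bound $\mathrm{length}\le\int_0^s d\sigma/\psi(\sigma)$, so divergence of this integral contradicts nothing.

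The missing idea is the same curve argument applied to a curve that nearly realizes the infimum. Once $\psi>0$, select the curve in the set above with $\|v\|\le|y|$ replaced by $\|v\|\le2\psi(|y|)$. Then $\tfrac12|c|\alpha t^{\alpha-1}\le K\|v(t)\|\le2K\psi(|y(t)|)$. Using $t\gtrsim|y(t)|^{1/\alpha}$, this gives $\psi(s)\gtrsim s^{1-1/\alpha}$ on the interval $(0,s_0)$ swept out by $|y(t)|$. So you may take $\theta=1-1/\alpha<1$; equivalently $\beta\le1-1/\alpha$, and the case $\beta\ge1$ never arises. The strict inequality comes from $\alpha\ge1$ together with the bounded speed of $\gamma$, not from any length argument.

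\textbf{Localization.} Restricting to a ball is necessary, not merely convenient. As printed, with $x$ ranging over all of $|f|^{-1}(0,\delta)$, the statement is false. For example, $f(x)=x^2/(1+x^4)$ is semialgebraic with $f(0)=f'(0)=0$, yet $|f'(x)|\sim2f(x)^{3/2}$ as $|x|\to\infty$.

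With these repairs, your marginal-function and curve-selection route is a valid, more self-contained alternative to the stratification argument of the cited source.
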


\begin{lemma}[Inverse Function Theorem \cite{clarke1976inverse}]\label{clarke_inverse}
Let $u_0 \in X$ and $h_0 \in Y$ such that $g\left(u_0\right)=h_0$ and suppose that there exists a neighborhood $U_0 \subset X$ of $u_0$ such that
1) $g \in C^1$ for all the point in $U_0$;
2) $d g\left(u_0\right)$ is invertible. 
Then,  there exist neighborhoods $U \subset U_0$ of $u_0$ and $V \subset Y$ of $h_0$, such that the equation $g(u)=h$ has a unique solution in $U$, for all $h \in V$. 
\end{lemma}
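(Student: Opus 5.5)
The plan is the classical contraction-mapping proof of the inverse function theorem. It works verbatim when $X,Y$ are Banach spaces; in the paper $X,Y$ are Euclidean. We read ``$dg(u_0)$ invertible'' as the existence of a bounded inverse $T^{-1}$, where $T:=dg(u_0)$; in finite dimensions, or by the open mapping theorem, this is automatic.

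First we normalize. For $h\in Y$ define on $U_0$ the map
\begin{equation*}
\Phi_h(u):=u-T^{-1}\bigl(g(u)-h\bigr).
\end{equation*}
Then $g(u)=h$ holds if and only if $u$ is a fixed point of $\Phi_h$. Note that $d\Phi_h(u)=I-T^{-1}dg(u)$ does not depend on $h$ and vanishes at $u_0$. Since $g\in C^1$ on $U_0$, the map $u\mapsto dg(u)$ is continuous. Hence we can choose $r>0$ with $\bar{\mathcal{B}}(u_0;r)\subset U_0$ and
\begin{equation*}
\|I-T^{-1}dg(u)\|\le \tfrac12 \quad \text{for all } u\in\bar{\mathcal{B}}(u_0;r).
\end{equation*}
By the mean value inequality on the convex ball, $\Phi_h$ is then a $\tfrac12$-contraction on $\bar{\mathcal{B}}(u_0;r)$, uniformly in $h$.

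Next we check self-mapping. For $u$ in the closed ball,
\begin{equation*}
\|\Phi_h(u)-u_0\|\le \|\Phi_h(u)-\Phi_h(u_0)\|+\|\Phi_h(u_0)-u_0\|\le \tfrac12 r+\|T^{-1}\|\,\|h-h_0\|,
\end{equation*}
using $\Phi_h(u_0)-u_0=T^{-1}(h-h_0)$, which follows from $g(u_0)=h_0$. Define
\begin{equation*}
V:=\{h:\ \|T^{-1}\|\,\|h-h_0\|<r/2\},\qquad U:=\mathcal{B}(u_0;r)\ \text{(open)}.
\end{equation*}
For $h\in V$, $\Phi_h$ maps the complete set $\bar{\mathcal{B}}(u_0;r)$ into itself. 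The Banach fixed point theorem then gives a unique fixed point $u_h$ in the closed ball, and uniqueness in $U$ follows immediately since $U$ is contained in the closed ball.

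The step that needs care is placing the solution inside the \emph{open} set $U$, so that we get existence as well as uniqueness in $U$. For this, compare $u_h$ with $u_0$ using the contraction:
\begin{equation*}
\|u_h-u_0\|\le \|\Phi_h(u_h)-\Phi_h(u_0)\|+\|\Phi_h(u_0)-u_0\|\le \tfrac12\|u_h-u_0\|+\|T^{-1}\|\,\|h-h_0\|.
\end{equation*}
Rearranging gives $\|u_h-u_0\|\le 2\|T^{-1}\|\,\|h-h_0\|<r$, so $u_h\in U$. Both $U\subset U_0$ and $V$ are open neighborhoods of $u_0$ and $h_0$, which completes the argument. The only genuinely delicate point is getting the uniform contraction constant from continuity of $dg$ near $u_0$; everything else is bookkeeping.
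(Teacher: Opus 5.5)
Your proof is correct. It is the classical contraction-mapping argument, and each step checks out, including placing $u_h$ in the open ball. In fact, since your self-map bound is strict, $\Phi_h$ maps the closed ball into the open one, so that last step is automatic.

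The paper gives no proof of its own. It imports the lemma from \cite{clarke1976inverse}, and as far as I recall that theorem is the nonsmooth version on $\mathbb{R}^n$: $g$ is only Lipschitz near $u_0$, and every element of Clarke's generalized Jacobian at $u_0$ is nonsingular. In that setting there is no continuous $dg$ to make $I-T^{-1}dg(u)$ uniformly small. The argument there essentially proceeds differently. It first establishes a uniform lower bound $\|g(u)-g(v)\|\ge\delta\|u-v\|$ near $u_0$, which gives injectivity. It then obtains local surjectivity by a finite-dimensional compactness argument rather than a fixed-point iteration.

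Your route uses the stronger hypothesis 1), which is exactly what the lemma as stated assumes. In exchange, it works verbatim in Banach spaces and is constructive, since Picard iteration of $\Phi_h$ converges to the solution. It also yields explicit radii together with the Lipschitz bound $\|u_h-u_0\|\le 2\|T^{-1}\|\,\|h-h_0\|$ for the local inverse.

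In the paper's application ($g=\nabla L$, $h_0=\mathbf{0}$, used to show that a nondegenerate stationary point is isolated), only the uniqueness half of the lemma is needed. Your contraction estimate gives it in one line: any zero $u$ of $g$ in the ball satisfies $\|u-u_0\|=\|\Phi_{h_0}(u)-\Phi_{h_0}(u_0)\|\le\frac12\|u-u_0\|$, hence $u=u_0$.
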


\begin{definition}[\cite{feehan2019resolution}]\label{feehan1}
Let $d \geq 1$ be an integer, $U \subset \mathbb{K}^d$ be an open subset, $E: U \rightarrow \mathbb{K}$ be a $C^2$ function, and Crit$(E):=\left\{x \in U: \nabla E(x)=0\right\}$. We say that $E$ is Morse-Bott at a point $x_{\infty} \in$ Crit$(E)$ if 1) Crit$(E)$ is a $C^2$ sub-manifold of $U$, and 2) $T_{x_{\infty}} \operatorname{Crit}(E)=\operatorname{Ker} \nabla ^2 E\left(x_{\infty}\right)$, where $T_x$ Crit$(E)$ is the tangent space to Crit$(E)$ at a point $x \in \operatorname{Crit}(E)$.    
\end{definition}

\begin{theorem}\label{feehan2}\cite{feehan2019resolution}
Let $d \geq 1$ be an integer and $U \subset \mathbb{K}^d$ an open subset. If $E: U \rightarrow \mathbb{K}$ is a Morse-Bott function, then there are constants $C \in(0, \infty)$ and $\sigma_0 \in(0,1]$ such that
\begin{equation*}
\left\|\nabla E(x)\right\| \geq C_0\left|E(x)-E\left(x_{\infty}\right)\right|^{1 / 2}, \quad \text { for all } x \in \mathcal{B}\left(x_{\infty};\sigma\right).    
\end{equation*}
\end{theorem}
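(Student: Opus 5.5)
The approach is to reduce the inequality to a quadratic estimate in the direction normal to the critical manifold. Near a Morse--Bott point, $E$ behaves like a nondegenerate quadratic form in the normal variables. I treat $\mathbb{K}=\mathbb{R}$; the complex case follows by identifying $\mathbb{C}^d\cong\mathbb{R}^{2d}$.

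\textbf{Step 1: $E$ is constant on the critical manifold.} Set $M:=\mathrm{Crit}(E)$, a $C^2$ submanifold of dimension $k$ through $x_\infty$, and $H(p):=\nabla^2E(p)$. Shrinking $U$, I take $M\cap U$ connected. Since $\nabla E\equiv 0$ on $M$, $E$ is constant there, so $E(p)=E(x_\infty)$ for all $p\in M$ near $x_\infty$.

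\textbf{Step 2: the Hessian is uniformly nondegenerate on normal spaces.} Differentiating $\nabla E(\gamma(t))=0$ along curves $\gamma$ in $M$ gives $H(p)\,T_pM=0$. Hence $\mathrm{Ker}\,H(p)\supseteq T_pM$, so $\mathrm{rank}\,H(p)\le d-k$ for every $p\in M$. The Morse--Bott hypothesis gives $\mathrm{rank}\,H(x_\infty)=d-k$. Rank is lower semicontinuous, so $\mathrm{rank}\,H(p)=d-k$ for $p\in M$ near $x_\infty$, and therefore $\mathrm{Ker}\,H(p)=T_pM$. Because $H(p)$ is symmetric, its range is the normal space $N_p=(T_pM)^\perp$, and $H(p)$ is invertible on $N_p$. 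The nonzero eigenvalues of $H(p)$ depend continuously on $p$. So there exist $\lambda>0$ and $\Lambda<\infty$ with
\[
\|H(p)v\|\ge\lambda\|v\| \quad\text{and}\quad \|H(p)\|\le\Lambda
\]
for all $v\in N_p$ and all $p\in M\cap\mathcal{B}(x_\infty;\sigma_1)$.

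\textbf{Step 3: Taylor expansion around the nearest point.} Since $M$ is $C^2$, the tubular neighborhood theorem applies. For $x\in\mathcal{B}(x_\infty;\sigma)$ with $\sigma$ small, there is a nearest point $p=p(x)\in M\cap\mathcal{B}(x_\infty;\sigma_1)$, and $v:=x-p\in N_p$. Taylor's formula with integral remainder, using $\nabla E(p)=0$, gives
\begin{align*}
\nabla E(x) &= H(p)v+\int_0^1\big(H(p+tv)-H(p)\big)v\,dt,\\
E(x)-E(x_\infty) &= \int_0^1(1-t)\langle H(p+tv)v,v\rangle\,dt,
\end{align*}
where the second line also uses Step 1. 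By uniform continuity of $H$ on a compact ball, after shrinking $\sigma$ the remainder in the first line is at most $\tfrac{\lambda}{2}\|v\|$. This yields
\[
\|\nabla E(x)\|\ge\tfrac{\lambda}{2}\|v\| \quad\text{and}\quad |E(x)-E(x_\infty)|\le\Lambda'\|v\|^2,
\]
with $\Lambda'$ a bound for $\|H\|$ on the ball.

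\textbf{Step 4: combine.} The two bounds give $\|\nabla E(x)\|\ge C_0|E(x)-E(x_\infty)|^{1/2}$ with $C_0=\lambda/(2\sqrt{\Lambda'})$.

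\textbf{Main obstacle.} The delicate point is Step 2: turning the pointwise condition $T_{x_\infty}M=\mathrm{Ker}\,\nabla^2E(x_\infty)$ into a uniform lower bound on $H(p)$ restricted to $N_p$ for nearby $p$. I would handle it with the rank semicontinuity argument above, followed by continuity of the spectrum. A secondary care point is that $M$ is only $C^2$. This is still enough for a well-defined nearest-point projection whose displacement $x-p$ lies exactly in $N_p$, which is what Step 3 needs.
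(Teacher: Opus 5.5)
The paper gives no proof of this statement (it is quoted from Feehan), so there is no internal argument to compare against. For $\mathbb{K}=\mathbb{R}$ your proof is a correct, self-contained substitute.

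\textbf{The real case.} The chain $T_pM\subseteq\mathrm{Ker}\,H(p)$, lower semicontinuity of rank, and continuity of eigenvalues (made precise by Weyl's inequality) does give a uniform spectral gap $\lambda$ on the normal spaces along $M$ near $x_\infty$. The Taylor expansion at a nearest point then gives $\|\nabla E(x)\|\ge\tfrac{\lambda}{2}\,\mathrm{dist}(x,M)$ and $|E(x)-E(x_\infty)|\le\Lambda'\,\mathrm{dist}(x,M)^2$.

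\textbf{Comparison with the normal-form route.} The other common approach straightens $E$ into a Morse--Bott normal form $E(x_\infty)+\tfrac12\langle Hy_N,y_N\rangle$ in adapted coordinates and reads the inequality off the quadratic model. Your argument needs only $C^2$ regularity and no change of coordinates, produces explicit constants, and proves the stronger distance inequality along the way. The normal-form route buys a complete local model of $E$, but constructing the adapted coordinates, and transporting the inequality through them, is the technical heart of that approach.

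\textbf{Two simplifications.} First, you do not need the tubular neighborhood theorem. $\mathrm{Crit}(E)$ is relatively closed in $U$, so a nearest point exists in the compact set $M\cap\overline{\mathcal{B}}(x_\infty;2\sigma)$, and first-order optimality alone gives $x-p\perp T_pM$. Second, the bound $\|p-x_\infty\|\le 2\|x-x_\infty\|$ is what places $p$ in the connected slice of $M$ where $E\equiv E(x_\infty)$ and the gap holds.

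Where the paper actually invokes this result (proof of Theorem~\ref{theo_linear}), the critical point is isolated. There $k=0$, and your argument collapses to a Taylor estimate at a nondegenerate critical point.

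\textbf{The complex case.} The one step that fails as written is your reduction of $\mathbb{K}=\mathbb{C}$ to the real case. Identifying $\mathbb{C}^d\cong\mathbb{R}^{2d}$ handles the domain, but $E$ is $\mathbb{C}$-valued, so there is no real symmetric Hessian. The inference ``$H(p)$ symmetric $\Rightarrow$ range $=N_p$, then use the nonzero eigenvalues'' therefore breaks.

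For instance, $E(z)=\tfrac12(z_1+iz_2)^2$ is Morse--Bott at $0$: its critical set is the line $z_1+iz_2=0$, which equals $\mathrm{Ker}\,H$. Yet $H=\bigl(\begin{smallmatrix}1&i\\ i&-1\end{smallmatrix}\bigr)$ satisfies $H^2=0$. It has no nonzero eigenvalues and maps $N=\mathrm{span}\{(1,-i)\}$ onto $\mathrm{Ker}\,H$ rather than onto $N$.

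The inequality still holds in this example, and the repair is to drop spectral theory:
\begin{enumerate}
\item Once $\mathrm{Ker}\,H(p)=T_pM$, the map $v\mapsto H(p)v$ is injective on $N_p$.
\item Since $p\mapsto H(p)$ and $p\mapsto N_p$ are continuous ($M$ is $C^2$), the quantity $\min\{\|H(p)v\|: v\in N_p,\ \|v\|=1\}$ is continuous and positive at $x_\infty$, hence bounded below by some $\lambda>0$ nearby.
\item Reading $DE$ and $D^2E$ as $\mathbb{C}$-valued real-(bi)linear maps, your Steps~3--4 then go through unchanged.
\end{enumerate}
This does not affect the paper, which applies the result only to the real-valued Lagrangian.
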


%=============================================================

\section{Additional Experiment Details}

The details of the problem in the comparison section are presented here. In that experiment, we consider the following problems and used three different optimizer including our ADMM and illustrated their convergence rates in \cref{compare_state_of_the_art}. 

1. Convex objective with multi-affine constraints:
\begin{align*}
        \!\!\min_{\{x_i\},z}\!\! \frac{\mu_x}{2}(x_1^2+x_2^2+x_3^2+x_4^2)+\frac{\mu_z}{2}z^2,\quad 
        \mathrm{s.t.}\quad x_1x_2-x_3x_4+1.5z+1=0.
\end{align*}

2. Convex objective with linear constraints:
\begin{align*}
        \!\!\min_{\{x_i\},z}\!\! \frac{\mu_x}{2}(x_1^2+x_2^2)+\frac{\mu_z}{2}z^2,\quad 
        \mathrm{s.t.}\quad x_1+x_2+z+1=0.
\end{align*}

3. Non-convex objective with linear constraints:
\begin{align*}
        \!\!\min_{\{x_i\},z}\!\! \frac{\mu_x}{2}(x_1^2+4\sin^2(x_1)+x_2^2)+\frac{\mu_z}{2}z^2,\quad 
        \mathrm{s.t.}\quad x_1+x_2+z+1=0.
\end{align*}

%========================================================
\section{Derivations of the Locomotion Problem}\label{sec:loco_app}

Notice that the variables $\mathbf{c}_i$ and $\mathbf{\dot{c}}_i$ for $i\geq 2$ in Problem \cref{eq:robotic1} can be rewritten as functions of $\mathbf{f}=\{\mathbf{f}_i\}$, as 
\begin{align*}
    \mathbf{c}_i(\mathbf{f})&=\mathbf{c}_{\text {init }}+\dot{\mathbf{c}}_{\text {init }}i(\Delta t)+\sum_{i'=0}^{i-2}(i-1-i')\Big(\sum_{j=1}^N  \frac{\mathbf{f}_{i'}^j}{m}+\mathbf{g}\Big)(\Delta t)^2\!,\\
    \mathbf{\dot{c}}_{i}(\mathbf{f})&= \mathbf{\dot{c}}_{init} + \sum_{i'=0}^{i-1}\sum_{j=1}^{N}  \Big(\frac{\textbf{f}_{i'}^{j}}{m} + \mathbf{g}\Big)(\Delta t).
\end{align*}

%========================================================
\section{Proofs of theorems in Section \ref{sec:theorem}}\label{app:proofs_1}

\subsection*{Proof of \cref{theo_sublinear}}
The proof consists of three main parts: i) to show that $\{L(x^{k},z^{k},w^k)\}$ is decreasing, ii) to show that the sequence $\{x^k,z^k,w^k\}$ is bounded and has a limit point, and iii) to use the $\alpha$-PL property for establishing the convergence rate. 

\textbf{i) $L(x^{k},z^{k},w^k)$ is decreasing:}
From \cref{ass_strongly_convex}, $f$ is strongly convex for each blocks $i$, we get
\begin{equation}\label{f_iteraion}
    f(x_{1:i-1}^{k+1},x_{i:n}^k)-f(x_{1:i}^{k+1},x_{i+1:n}^k)\geq  \langle\nabla_i f(x_{1:i}^{k+1},x_{i+1:n}^k),x_i^{k+1}-x_i^k\rangle+\frac{\mu_f}{2}\|x_i^{k+1}-x_i^k\|^2.
\end{equation}
Furthermore, since $I_i$ is convex, \cref{ass_subgradient_convex} implies
\begin{equation}\label{I_i_iteration}
    I_i(x_i^{k+1})\geq I_i(x_i^{k})+\langle v, x_i^{k+1}-x_i^k\rangle,
\end{equation}
for all $v\in\partial I_i(x_i^t)$.
As $x_i^{k+1}\in\textrm{argmin}L(x_{1:i-1}^{k+1}, x_i, x_{i+1:n}^k, z^k,w^k)$, the subgradient at $x_i^{k+1}$ satisfies
\begin{equation}\label{partial_I_x_k}
\begin{aligned}
     &\mathbf{0}\in \partial L(x_{1:i}^{k+1}, x_{i+1:n}^k, z^k,w^k),\\
     \implies &\mathbf{0}\in \partial I(x_i^{k+1})+\nabla_i f(x_{1:i}^{k+1}, x_{i+1:n}^k)+\nabla_i A(x_{1:i}^{k+1},x_{i+1:n}^k) w^k\\
     &\quad +\rho\nabla_i A(x_{1:i}^{k+1},x_{i+1:n}^k)(A(x_{1:i}^{k+1},x_{i+1:n}^k)+Qz^k),\\
     \implies &-\nabla_i f(x_{1:i}^{k+1}, x_{i+1:n}^k)-\nabla_i A(x_{1:i}^{k+1},x_{i+1:n}^k)(w^k+\rho(A(x_{1:i}^{k+1},x_{i+1:n}^k)+Qz^k))\in\partial I(x_i^{k+1}).
\end{aligned}
\end{equation}
where the second line holds from the fact that $f(x)+\langle w,A(x)+Qz\rangle+\frac{\rho}{2}\|A(x)+Qz\|^2$ is first-order differentiable and 8.8(c) of \cite{rockafellar2009variational}.
On the other hand, we have
%The difference of Lagrangian at $(x_{1:i-1}^{k+1},x_{i:n}^k,z^k,w^k)$ and $(x_{1:i}^{k+1},x_{i+1:n}^k,z^k,w^k)$ equals to
\begin{equation*}
\begin{aligned}
    &L(x_{1:i-1}^{k+1},x_{i:n}^k,z^k,w^k)-L(x_{1:i}^{k+1},x_{i+1:n}^k,z^k,w^k)\\
    &=f(x_{1:i-1}^{k+1},x_{i:n}^k)-f(x_{1:i}^{k+1},x_{i+1:n}^{k})+I_i(x_i^{k})-I_i(x_i^{k+1})+\langle w^k, A(x_{1:i-1}^{k+1},x_{i:n}^k)-A(x_{1:i}^{k+1},x_{i+1:n}^k)\rangle\\
    &\quad +\frac{\rho}{2}(\|A(x_{1:i-1}^{k+1},x_{i:n}^k)+Qz^k\|^2-\|A(x_{1:i}^{k+1},x_{i+1:n}^k)+Qz^k\|^2)\\
    &=f(x_{1:i-1}^{k+1},x_{i:n}^k)-f(x_{1:i}^{k+1},x_{i+1:n}^k)+I_i(x_i^{k})-I_i(x_i^{k+1})+\langle w^k, A(x_{1:i-1}^{k+1},x_{i:n}^k)-A(x_{1:i}^{k+1},x_{i+1:n}^k)\rangle\\
    &\quad +\frac{\rho}{2}\|A(x_{1:i-1}^{k+1},x_{i:n}^k)-A(x_{1:i}^{k+1},x_{i+1:n}^k)\|^2+\rho\langle A(x_{1:i-1}^{k+1},x_{i:n}^k)-A(x_{1:i}^{k+1},x_{i+1:n}^k),A(x_{1:i}^{k+1},x_{i+1:n}^k)+Qz^k\rangle
\end{aligned}
\end{equation*}
 \begin{equation*}
\begin{aligned}   
    &=f(x_{1:i-1}^{k+1},x_{i:n}^k)-f(x_{1:i}^{k+1},x_{i+1:n}^k)+I_i(x_i^{k})-I_i(x_i^{k+1})+\frac{\rho}{2}\|A(x_{1:i-1}^{k+1},x_{i:n}^k)-A(x_{1:i}^{k+1},x_{i+1:n}^k)\|^2,\\
    &\quad +\langle w^k+\rho(A(x_{1:i}^{k+1},x_{i+1:n}^k)+Qz^k),  A(x_{1:i-1}^{k+1},x_{i:n}^k)-A(x_{1:i}^{k+1},x_{i+1:n}^k) \rangle\\
    &=f(x_{1:i-1}^{k+1},x_{i:n}^k)-f(x_{1:i}^{k+1},x_{i+1:n}^k)+I_i(x_i^{k})-I_i(x_i^{k+1})+\frac{\rho}{2}\|A(x_{1:i-1}^{k+1},x_{i:n}^k)-A(x_{1:i}^{k+1},x_{i+1:n}^k)\|^2,\\
    &\quad +(x_i^k-x_i^{k+1})^T\nabla_i A(x_{1:i+1}^{k},x_{i+1:n}^k)(w^k+\rho(A(x_{1:i}^{k+1},x_{i+1:n}^k)+Qz^k))\\
    &=f(x_{1:i-1}^{k+1},x_{i:n}^k)-f(x_{1:i}^{k+1},x_{i+1:n}^k)+I_i(x_i^{k})-I_i(x_i^{k+1})+\frac{\rho}{2}\|A(x_{1:i-1}^{k+1},x_{i:n}^k)-A(x_{1:i}^{k+1},x_{i+1:n}^k)\|^2,\\
    &\quad +(x_i^k-x_i^{k+1})^T(-v-\nabla_i f(x_{1:i}^{k+1},x_{i+1:n}^k))\\
    &= f(x_{1:i-1}^{k+1},x_{i:n}^k)-f(x_{1:i}^{k+1},x_{i+1:n}^k)-\langle \nabla_i f(x_{1:i}^{k+1},x_{i+1:n}^k),x_i^{k}-x_i^{k+1}\rangle\\
    &\quad +I_i(x_i^{k})-I_i(x_i^{k+1})-\langle v, x_i^k-x_i^{k+1}\rangle+\frac{\rho}{2}\|A(x_{1:i}^{k+1},x_{i+1:n}^k)-A(x_{1:i-1}^{k+1},x_{i:n}^k)\|^2.
\end{aligned}
\end{equation*}

where $v\in\partial I(x_i^{k+1})$. The second equality is due to $\|V_1-V_3\|^2-\|V_2-V_3\|^2=\|V_2-V_3\|^2+2\langle V_1-V_2,V_2-V_3\rangle$, for all the vectors $V_1$, $V_2$ and $V_3$. The fourth equality holds since $A(x_i,x_{-i})$ is an affine function for $x_i$ when $x_{-i}$ is fixed. We apply the \cref{partial_I_x_k} at the fifth equality.

Consider the convexity property of $f$ and $I_i$ from \cref{f_iteraion}, \cref{I_i_iteration}, the difference of Lagrangian can be further lower-bounded as
\begin{equation*}
\begin{aligned}
    &L(x_{1:i-1}^{k+1},x_{i:n}^k,z^k,w^k)-L(x_{1:i}^{k+1},x_{i+1:n}^k,z^k,w^k)\\
    &= f(x_{1:i-1}^{k+1},x_{i:n}^k)-f(x_{1:i}^{k+1},x_{i+1:n}^k)-\langle \nabla_i f(x_{1:i}^{k+1},x_{i+1:n}^k),x_i^{k}-x_i^{k+1}\rangle\\
    &\quad +I_i(x_i^{k})-I_i(x_i^{k+1})-\langle v, x_i^{k}-x_i^{k+1}\rangle+\frac{\rho}{2}\|A(x_{1:i}^{k+1},x_{i+1:n}^k)-A(x_{1:i-1}^{k+1},x_{i:n}^k)\|^2\\
    &\geq f(x_{1:i-1}^{k+1},x_{i:n}^k)-f(x_{1:i}^{k+1},x_{i+1:n}^k)-\langle \nabla_i f(x_{1:i}^{k+1},x_{i+1:n}^k),x_i^{k}-x_i^{k+1}\rangle\\
    &\quad +I_i(x_i^{k})-I_i(x_i^{k+1})-\langle v, x_i^{k}-x_i^{k+1}\rangle\\
    &\geq \frac{\mu_f}{2}\|x_i^{k+1}-x_i^k\|^2.
\end{aligned}
\end{equation*}
Adding up the above inequality for all the blocks, the difference of Lagrangian between $(x^{k},z^k,w^k)$ and $(x^{k+1},z^k,w^k)$ can be lower-bounded,
\begin{equation}\label{x_iterates}
    L(x^{k},z^k,w^k)-L(x^{k+1},z^k,w^k)\geq \frac{\mu_f}{2}\|x^{k+1}-x^k\|^2.
\end{equation}
From the strong convexity of $\phi(z)$, the partial Hessian of Lagrangian on variable $z$ is
\begin{equation*}
    \nabla_{zz}^2 L(x,z,w)=\nabla_{zz}^2 \phi(z)+\rho Q^TQ\succeq \mu_\phi I,
\end{equation*}
which indicates the Lagrangian is $\mu_\phi$-strongly convex at $z$. As $z^{k+1}\in\mathrm{argmin}_zL(x^{k+1},z,w^k)$ and the strong-convexity of Lagrangian at $z$, the difference of Lagrangian between $(x^{k+1},z^k,w^k)$ and $(x^{k+1},z^{k+1},w^k)$ can be lower-bounded, 
\begin{equation}\label{z_iterates}
    L(x^{k+1},z^k,w^k)-L(x^{k+1},z^{k+1},w^k)\geq \frac{\mu_\phi}{2}\|z^k-z^{k+1}\|^2.
\end{equation}
From the update rule on $z$ and $w$, the partial derivative on $z$ at $(x^{k+1},z^{k+1},w^k)$ satisfies,
\begin{equation*}
    \mathbf{0}=\nabla_z L(x^{k+1},z^{k+1},w^k)=\nabla \phi(z^{k+1})+Q^Tw^k+\rho Q^T(A(x^k+1)+Qz^{k+1})=\nabla \phi(z^{k+1})+Q^Tw^{k+1}.
\end{equation*}
which indicates $Q^{T}w^k=-\nabla \phi(z^k)$ if $k\geq 1$. In addition, from the setting that $Q^{T}w^0=-\nabla \phi(z^0)$, we have $Q^{T}w^k=-\nabla \phi(z^k)$ for all $k\geq 0$. 
Therefore, for all $k$, we get
\begin{equation}\label{w_z_bound}
    \|w^{k+1}-w^k\|^2\leq\frac{\|Q^Tw^{k+1}-Q^Tw^k\|^2}{\lambda_{min}^+(Q^TQ)}=\frac{\|\nabla \phi(z^{k+1})-\nabla \phi(z^k)\|^2}{\lambda_{min}^+(Q^TQ)}\leq\frac{L_\phi^2\|z^{k+1}-z^{k}\|^2}{\lambda_{min}^+(Q^TQ)}.
\end{equation}
After updating $w$, the Lagrangian between $(x^{k+1},z^{k+1},w^{k})$ and $(x^{k+1},z^{k+1},w^{k+1})$ is lower-bounded by

\begin{equation}\label{w_iterates}
\begin{aligned}
        L(x^{k+1},z^{k+1},w^{k})-L(x^{k+1},z^{k+1},w^{k+1})&=\langle w^k-w^{k+1}, A(x^{k+1})+Qz^{k+1}\rangle\\
        &=-\frac{1}{\rho}\|w^{k+1}-w^{k}\|^2\geq-\frac{L_\phi^2\|z^{k+1}-z^k\|^2}{\rho\lambda_{min}^+(Q^TQ)}.
\end{aligned}
\end{equation}
where the inequality is due to \cref{w_z_bound}.
As a result, from \cref{x_iterates}, \cref{z_iterates} and \cref{w_iterates}, the difference of Lagrangian between $(x^k,z^k,w^k)$ and $(x^{k+1},z^{k+1},w^{k+1})$ satisfies
\begin{equation}\label{L_diff_square}
\begin{aligned}
        &L(x^k,z^k,w^k)-L(x^{k+1},z^{k+1},w^{k+1})\\
        &\geq\frac{\mu_f}{2}\|x^{k+1}-x^{k}\|^2+(\frac{\mu_\phi}{2}-\frac{L_\phi^2}{\rho\lambda_{min}^+(Q^TQ)})\|z^{k+1}-z^{k}\|^2,\\
        &\geq \frac{\mu_f}{2}\|x^{k+1}-x^{k}\|^2+\frac{\mu_\phi}{4}\|z^{k+1}-z^{k}\|^2.\\
        &\geq \frac{\mu_f}{2}\|x^{k+1}-x^{k}\|^2+\frac{\mu_\phi}{8}\|z^{k+1}-z^{k}\|^2+\frac{\mu_\phi\lambda_{min}^+(Q^TQ)}{8L_\phi^2}\|w^{k+1}-w^k\|^2, \forall k.
\end{aligned}
\end{equation}
where we apply $\rho\geq\frac{4L_\phi^2}{\mu_\phi\lambda_{min}^+(Q^TQ)}$ in the second inequality. In consequence, $\{L_k\}_{k=0}^{+\infty}$ is decreasing.

%%%%%%%%%%%%%%%%%%%%%%%%%%%%%%%%%%%%%%%%%%%%%%%%%%%%%%%%%%%%%%%%%%%%%%%%%%%%%%%%%%%%%%%%%%%%%%%%%%%%%%%%%%%%%%%%%%%%%%%%%%%%%%%%%%%%%%%%%%%%%%%%%%%%%%%%

\textbf{ii) $(x^k,z^k,w^k)$ is bounded:}
For these iterate, by denoting $\tilde{z}^{k}\in\mathrm{argmin}\{\phi(z)|A(x^{k})+Qz=0\}$, we have
\begin{equation}\label{L_lower_bound}
\begin{aligned}
    L(x^{k},z^{k},w^k)&=f(x^k)+\phi(z^k)+\langle w^k, A(x^k)+Qz^k\rangle +\frac{\rho}{2}\|A(x^k)+Qz^k\|^2,\\
    &=f(x^k)+\phi(z^k)+\langle w^k, Q(z^k-\tilde{z}^k)\rangle +\frac{\rho}{2}\|A(x^k)+Qz^k\|^2,\\
    &=f(x^k)+\phi(z^k)+\langle \nabla \phi(z^k), \tilde{z}^k-z^k\rangle +\frac{\rho}{2}\|A(x^k)+Qz^k\|^2,\\
    &=f(x^k)+\phi(\tilde z^k)+\phi(z^k)-\phi(\tilde z^k)+\langle \nabla \phi(z^k), \tilde{z}^k-z^k\rangle +\frac{\rho}{2}\|A(x^k)+Qz^k\|^2,\\
    &\geq f(x^k)+\phi(\tilde z^k)-\frac{L_\phi}{2}\|z^k-\tilde z^k\|^2 +\frac{\rho}{2}\|A(x^k)+Qz^k\|^2.
\end{aligned}
\end{equation}
where the third line comes from $\nabla \phi(z^k)+Q^Tw^k=0$, the fourth line is due to the Lipschitz gradient of $\phi$.
Now, consider any $z'$ such that $-Qz^k+Qz'=0$, then
\begin{equation*}
    L(x^{k+1},z',w^{k})-L(x^{k+1},z^k,w^{k})=\phi(z')-\phi(z^k).
\end{equation*}
Since $z^k\in\arg\min L(x^{k+1},z,w^k)$, from the equation above, we get $\phi(z^k)\leq\phi(z')$. 
In words, $z^k\in\mathrm{argmin}\{\phi(z)|-Qz^k+Qz=0\}$, $\forall k\geq 1$. Notice that $\tilde{z}^{k}\in\mathrm{argmin}\{\phi(z)|A(x^{k})+Qz=0\}$ and $\phi(z)$ is strongly convex and smooth, from \cref{bound_z_tilde_z}, \cref{L_lower_bound} can be further lower-bounded when $\rho$ is large enough,
\begin{equation}\label{L_lower_bound_2}
\begin{aligned}
     L(x^{k},z^{k},w^k)&\geq f(x^k)+\phi(\tilde z^k)-\frac{L_\phi}{2}\|z^k-\tilde z^k\|^2 +\frac{\rho}{2}\|Q(z^k-\tilde{z}^k)\|^2,\\
     &\geq f(x^k)+\phi(\tilde z^k)+(\frac{\rho}{2}-\frac{\gamma L_\phi}{2})\|A(x^k)+Qz^k\|^2,\\
     &\geq f(x^k)+\phi(\tilde z^k),\\
     &\geq f(x_f^\star)+\phi(z_\phi^\star)+\frac{\mu_f}{2}\|x^k-x_f^\star\|^2+\frac{\mu_\phi}{2}\|\tilde z^k-z_\phi^\star\|^2.
\end{aligned}
\end{equation}
 where $\gamma=\frac{1+\frac{2L_\phi}{\mu_\phi}}{\sqrt{\lambda_{\min}^+(QQ^T)}}$ and we apply $\rho\geq \frac{4L_\phi^2}{\mu_\phi\sqrt{\lambda_{\min}^+(QQ^T)}}$. 

%%%%%%%%%%%%%%%%%%%%%%%%%%%%%%%%%%%%%%%%%%%%%%%%%%%%%%%%%%%%%%%%%%%%%%%%%%%%%%%%%%%%%%%%%%%%%%%%%%%

The last line of \cref{L_lower_bound_2} and $\{L(x^{k},z^{k},w^k)\}$ being decreasing indicate $\{x^k\}$ and $\{\tilde{z}^k\}$ are bounded. 
In addition, from the \cref{L_lower_bound_2}, $\|\tilde{z}^k-z^k\|^2$ can be upper-bounded as
\begin{equation*}
\begin{aligned}
       \|\tilde{z}^k-z^k\|^2&\leq\frac{2}{\rho-\gamma L_\phi}(L(x^k,z^k,w^k)-f(x^k)-\phi(\tilde{z}^k)),\\
       &\leq \frac{2}{\rho-\gamma L_\phi}(L(x^0,z^0,w^0)-f(x_f^\star)-\phi(z_\phi^\star)). 
\end{aligned}
\end{equation*}
This implies that $\{z^k\}$ is bounded sequence as well.

As $Q$ is full row rank, $QQ^T$ is positive definite matrix. From the update rules of $w$, we know that $Q^Tw^k=-\nabla \phi(z^k)$. Thus, 
\begin{equation*}
\begin{aligned}
    \|w^k\|&=\|(QQ^T)^{-1}Q\nabla \phi(z^k)\|\\
    &\leq \|(QQ^T)^{-1}Q\|\cdot\|\nabla \phi(z^k)-\nabla \phi(z_\phi^\star)\|\\
    &\leq \|(QQ^T)^{-1}Q\|\cdot\|\nabla \phi(z^k)-\nabla \phi(z_\phi^\star)\|\leq L_\phi\|(QQ^T)^{-1}Q\|\cdot\|z^k-z_\phi^\star\|.    
\end{aligned}
\end{equation*}
As $\{z^k\}$ is bounded, $\{w^k\}$ will be bounded. Hence, $\{x^k,z^k,w^k\}$ is bounded and a limit point exists.

%%%%%%%%%%%%%%%%%%%%%%%%%%%%%%%%%%%%%%%%%%%%%%%%%%%%%%%%%%%%%%%%%%%%%%%%%%%%%%%%%%%%%%%%%%%%
From the definition of $L(x^{k+1},z^{k+1},w^{k+1})$, its subgradient at $x_i$ in $(k+1)$-th iteration is
\begin{equation*}
\begin{aligned}
        \partial_i L(x^{k+1},z^{k+1},w^{k+1})&=\partial I_i(x_i^{k+1})+\nabla_i f(x^{k+1})+\nabla_i A(x^{k+1}) w^{k+1}\\
    &+\rho\nabla_i A(x^{k+1})(A(x^{k+1})+Qz^{k+1}).
\end{aligned}
\end{equation*}
From \cref{partial_I_x_k}, we have 
$-\nabla_i f(x_{1:i}^{k+1}, x_{i+1:n}^k)-\nabla_i A(x_{1:i}^{k+1},x_{i+1:n}^k)(w^k+\rho(A(x_{1:i}^{k+1},x_{i+1:n}^k)+Qz^k))\in\partial I(x_i^{k+1})$,
thus, according to the above equation,  $v_i^{k+1}\in\partial_i L(x^{k+1},z^{k+1},w^{k+1})$ can be written as follows. 
%\textcolor{blue}{where did you get the first equation for $v_i^{k+1}$? what is $v_i^{k+1}$, a subgradient of $I$ or $L$? and why the last conclusion that $\in\partial_i L(x^{k+1},z^{k+1},w^{k+1})$? }
\begin{equation*}
\begin{aligned}
        v_i^{k+1}&=-\nabla_i f(x_{1:i}^{k+1}, x_{i+1:n}^k)-\nabla_i A(x_{1:i}^{k+1},x_{i+1:n}^k)(w^k+\rho(A(x_{1:i}^{k+1},x_{i+1:n}^k)+Qz^k))\\
        &\quad +\nabla_i f(x^{k+1})+\nabla_i A(x^{k+1}) w^{k+1}+\rho\nabla_i A(x^{k+1})(A(x^{k+1})+Qz^{k+1})
\end{aligned}
\end{equation*}
Using algebraic manipulations, we can obtain
\begin{equation}\label{v_i_k}
\begin{aligned}
        v_i^{k+1}&=\nabla_i f(x^{k+1})-\nabla_i f(x_{1:i}^{k+1}, x_{i+1:n}^k)+(\nabla_i A(x^{k+1})-\nabla_i A(x_{1:i}^{k+1},x_{i+1:n}^k))w^{k+1}\\
        &\quad +\nabla_i A(x_{1:i}^{k+1},x_{i+1:n}^k)(w^{k+1}-w^k)
        +\rho(\nabla_i A(x^{k+1})-\nabla_i A(x_{1:i}^{k+1},x_{i+1:n}^k))A(x^{k+1})\\
        &\quad +\rho\nabla_i A(x_{1:i}^{k+1},x_{i+1:n}^k)(A(x^{k+1})-A(x_{1:i}^{k+1},x_{i+1:n}^k))\\
        &\quad +\rho(\nabla_i A(x^{k+1})-\nabla_i A(x_{1:i}^{k+1},x_{i+1:n}^k))Qz^{k+1} +\rho\nabla_i A(x_{1:i}^{k+1},x_{i+1:n}^k)Q(z^{k+1}-z^k).
        %\\&\in\partial_i L(x^{k+1},z^{k+1},w^{k+1}).
\end{aligned}
\end{equation}

%%%%%%%%%%%%%%%%%%%%%%%%%%%%%%%%%%%%%%%%%%%%%%%%%%%%%%%%%%%%%%%%%%%%%%%%%%%%%%%%%%%%%%%%%%%%%%%%%%%%%%%%

By applying \cref{L_diff_square}, and the fact that $\{L^k\}$ is lower bounded from \cref{L_lower_bound_2}, the norm of the difference between two iterates converge to 0, i.e.
\begin{equation}\label{x_z_diff_converge}
    \lim_{k\to+\infty}\|x^{k+1}-x^k\|=0, \lim_{k\to+\infty}\|z^{k+1}-z^k\|=0, \lim_{k\to+\infty}\|w^{k+1}-w^k\|=0.
\end{equation}
Because $\{x^k\}$ is bounded and $\{\nabla_i A(x^k)\}$ is multi-affine with respect to $x^k$, then $\{\nabla_i A(x^k)\}$ is bounded as well. 
From \cref{x_z_diff_converge}, the limit of $\{v_i^k\}$ goes to the zero vector for all the blocks $i$, i.e., 
\begin{equation*}
    \lim_{k\to+\infty} v_i^k=\mathbf{0}\in\partial_i L(x^k,z^k,w^k).
\end{equation*}

For the variables $w$ and $z$, applying the update rule indicates
\begin{equation*}
\nabla_w L(x^{k+1},z^{k+1},w^{k+1})=A(x^{k+1})+Qz^{k+1}=\frac{1}{\rho}(w^{k+1}-w^{k}),
\end{equation*}
and
\begin{equation*}
\begin{aligned}
    \nabla_z L(x^{k+1},z^{k+1},w^{k+1})&=\nabla \phi(z^{k+1})+Q^Tw^{k+1}+\rho Q^T(A(x^{k+1})+Qz^{k+1}),\\
    &=\rho Q^T(A(x^{k+1})+Qz^{k+1})=Q^T(w^{k+1}-w^k).
\end{aligned}
\end{equation*}

Therefore, by applying \cref{x_z_diff_converge}, the limit of the partial gradient $\lim_{k\to+\infty} \nabla_z L(x^k,z^k,w^k)=\mathbf{0}$ and $\lim_{k\to+\infty} \nabla_w L(x^k,z^k,w^k)=\mathbf{0}$. Overall, there exists $v^k\in\partial L(x^k,z^k,w^k)$ such that $\lim_{k\to+\infty} v^k=\mathbf{0}$. 
As the limit point $(x^\star,z^\star,w^\star)$ exists and $\mathbf{0}\in\partial L(x^\star,z^\star,w^\star)$, then $(x^\star,z^\star,w^\star)$ is a constrained stationary point. 

On the other hand, since, the following problems are convex with affine constraints on $x_i$ and $z$, respectively
    \begin{equation*}
        \min_{x_i}\{f(x_i,x_{-i}^\star)+I_i(x_i): A(x_i,x_{-i}^\star)+Q(z^\star)=0\},
    \end{equation*}
    \begin{equation*}
        \min_{z}\{\phi(z): A(x^\star)+Qz=0\},
    \end{equation*}
   they both satisfy the strong duality condition and thus $(x^\star,z^\star,w^\star)$ is also the global optimum of these problems. This finishes the first part of the proof. 

%%%%%%%%%%%%%%%%%%%%%%%%%%%%%%%%%%%%%%%%%%%%%%%%%%%%%%%%%%%%

\textbf{iii) Establishing the convergence rate:} 
%In the remainder, we use the $\alpha$-PL property to establish the sub-linear convergence rate. 
Note that $I_i(x_i)$ is an indicator function of a closed semi-algebraic set. Subsequently, $L$ is a lower semi-continuous and sub-analytic function. Thus, applying \cref{lower_semicontinuous_clarke} to the function $L(x,z,w)-L(x^\star,z^\star,w^\star)$ shows that there exits $1<\alpha\leq 2$ and $\eta>0$ such that it satisfies the $\alpha$-PL property, 
\begin{equation*}
   (\mathrm{dist}(0,\partial L(x,z,w)))^\alpha\geq  c(L(x,z,w)-L(x^\star,z^\star,w^\star))
\end{equation*}
whenever $|L(x,z,w)-L(x^\star,z^\star,w^\star)|\leq\eta$. 

On the other hand, from \cref{L_diff_square}, there exists positive constants $a$ such that
\begin{equation}\label{L_diff_square_simp}
    L(x^{k+1},z^{k+1},w^{k+1})-L(x^{k},z^{k},w^{k})\leq -a(\|x^{k+1}-x^k\|^2+\|z^{k+1}-z^k\|^2+\|w^{k+1}-w^k\|^2).
\end{equation}
Moreover, from \cref{v_i_k} and the fact that $\{x_k,z_k,w_k\}$ is bounded, the norm of the subgradient is also upper-bounded the distance between two iterates, i.e., 
\begin{equation}\label{subgraident_lipschitz_bound}
    \|v^{k+1}\|\leq b\sqrt{\|x^{k+1}-x^k\|^2+\|z^{k+1}-z^k\|^2+\|w^{k+1}-w^k\|^2},
\end{equation}
where $b>0$. Therefore, by applying \cref{L_diff_square_simp} and \cref{subgraident_lipschitz_bound}, the following inequality holds,
\begin{equation}\label{L_iteration_difference}
     L(x^{k+1},z^{k+1},w^{k+1})-L(x^{k},z^{k},w^{k})\leq -\frac{a}{b^2}\|v^{k+1}\|^2\leq-\frac{a}{b^2}\mathrm{dist}\Big(0,\partial L(x^{k+1},z^{k+1},w^{k+1})\Big)^2.
\end{equation}
From the $\alpha$-PL property, we obtain
\begin{equation*}
     L(x^{k+1},z^{k+1},w^{k+1})-L(x^{k},z^{k},w^{k})\leq -\frac{ac^{2/\alpha}}{b^2}\Big(L(x^{k+1},z^{k+1},w^{k+1})-L(x^\star,z^\star,w^\star)\Big)^{2/\alpha}.
\end{equation*}
whenever $|L(x^{k+1},z^{k+1},w^{k+1})-L(x^\star,z^\star,w^\star)|\leq\eta$. 
Then, from Theorem 1 and 2 of \cite{bento2024convergence}, we have
\begin{equation*}
    L(x^{k},z^{k},w^{k})-L(x^\star,z^\star,w^\star)\in \mathcal{O}(k^{-\frac{\alpha}{2-\alpha}})\subseteq o\left(\frac{1}{k}\right).
\end{equation*}
Moreover, the iterates converges to the limit point, i.e., $\lim_{k\to+\infty}(x^k,z^k,w^k)=(x^\star,z^\star,w^\star)$. From Theorem 3 of \cite{bento2024convergence}, the convergence rate of $(x^k,z^k,w^k)$ is

\begin{equation*}
    \|(x^k,z^k,w^k)-(x^\star,z^\star,w^\star)\|\in\mathcal{O}(k^{-\frac{\alpha-1}{2-\alpha}}).
\end{equation*}
\qed

%=================================================
\subsection*{Proof of \cref{theo_linear}}

As Lagrangian is second-order differentiable at $(x^\star,z^\star,w^\star)$, we have $F(x)=f(x)$, $\forall x\in \mathcal{B}(x^\star;r)$, where $r>0$. In words, there exists a neighborhood of $(x^\star,z^\star,w^\star)$ which belongs to the constraint set, i.e., indicator functions are all zero for the points in that neighborhood. 
Recall the Lagrangian function, 
\begin{equation*}
    L(x,z,w)=f(x)+\phi(z)+\sum_{i=1}^{n_c} w_i(\frac{x^TC_ix}{2}+d_i^Tx+e_i+q_i^Tz)+\frac{\rho}{2}\sum_{i=1}^{n_c}(\frac{x^TC_ix}{2}+d_i^Tx+e_i+q_i^Tz)^2.
\end{equation*}
Under \cref{ass_A_inside_Q}, \cref{ass_strongly_convex} and smoothness, we have $L_f I\succeq\nabla^2 f(x)\succeq \mu_f I$, $L_\phi I\succeq\nabla^2 \phi(z)\succeq \mu_\phi I$ and 
$Q^T=\begin{bmatrix}
    q_1, \cdots, q_n
\end{bmatrix}$ is full column rank. 
The constrained stationary point satisfies 

\begin{equation}\label{stationary_eqaulity}
     \nabla_z L(x^\star,z^\star,w^\star)=\nabla \phi(z^\star)+Q^Tw^\star=0.
\end{equation}

Next, we compute the Hessian of the Lagrangian at the stationary point $(x^\star,z^\star,w^\star)$ and show that if it is invertible then the convergence rate will be linear. 
To this end, applying \cref{stationary_eqaulity}, the Hessian at $(x^\star,z^\star,w^\star)$ can be represented as follows
\begin{small}
\begin{equation*}
\begin{bmatrix} 
\nabla^2 f(x^\star)+\sum_{i}w_i^\star C_i & \textbf{0}_{n_x\times n_z} & C_1x^\star+d_1 & \cdots & C_nx^\star+d_n\\ 
\textbf{0}_{n_z\times n_x} & \nabla^2 \phi(z^\star) & q_1 & \cdots & q_n\\
(C_1x^\star+d_1)^T & q_1^T & 0 &\cdots & 0\\
\vdots & \vdots & \vdots &\ddots &\vdots\\
(C_nx^\star+d_n)^T & q_n^T & 0 &\cdots & 0
\end{bmatrix}
+\rho \sum_{i=1}^{n_c}H_i,
\end{equation*}
\end{small}
where 
\begin{small}
    \begin{align*}
        H_i=\begin{bmatrix}
C_ix^\star+d_i\\ 
q_i\\
0\\
\vdots\\
0  
\end{bmatrix}
\begin{bmatrix}
(C_ix^\star+d_i)^T & q_i^T & 0 &\cdots & 0\\
\end{bmatrix}.        
    \end{align*}
\end{small}
As a result, the kernel of the Hessian at $(x^\star,z^\star,w^\star)$ is equivalent to the kernel of 
the matrix in the first term. 
%the following matrix
%\begin{equation*}
%\begin{bmatrix} 
%\nabla^2 f(x^\star)+\sum_{i}w_i^\star C_i & \textbf{0}_{n_x\times n_z}  & C_1x^\star+d_1 & \cdots & C_nx^\star+d_n\\ 
%\textbf{0}_{n_z\times n_x}  & \nabla^2 \phi(z^\star) & q_1 & \cdots & q_n\\
%(C_1x^\star+d_1)^T & q_1^T & 0 &\cdots & 0\\
%\vdots & \vdots & \vdots &\ddots &\vdots\\
%(C_nx^\star+d_n)^T & q_n^T & 0 &\cdots & 0
%\end{bmatrix}.
%\end{equation*}
Which is invertible when the following matrix is invertible. 
\begin{small}
\begin{equation*}
\nabla^2 f(x^\star)+\sum_i w_i^\star C_i-\begin{bmatrix}
\textbf{0}_{n_x\times n_z} & C_1x^\star+d_1,\cdots, C_nx^\star+d_n
\end{bmatrix}
\begin{bmatrix}
    \nabla^2 \phi(z^\star) & Q^T\\
    Q & \textbf{0}_{n\times n}
\end{bmatrix}^{-1}
\begin{bmatrix}
\textbf{0}_{n_z\times n_x} \\ (C_1x^\star+d_1)^T\\ \vdots\\ (C_nx^\star+d_n)^T
\end{bmatrix}.
\end{equation*}
\end{small}
Note that 
\begin{small}
$\begin{bmatrix}
    \nabla^2 \phi(z^\star) & Q^T\\
    Q & \textbf{0}_{n\times n}
\end{bmatrix}^{-1}$ 
\end{small}
exists given that $Q$ is full row rank. 
%The above matrix can be rewritten as follows
% \begin{equation}
% \nabla^2 f(x^\star)+\sum_i w_i^\star C_i+ \begin{bmatrix}
% C_1x^\star+d_1,\cdots, C_nx^\star+d_n
% \end{bmatrix}(Q\nabla^2 \phi(z^\star)^{-1}Q^T)^{-1}
% \begin{bmatrix}
% (C_1x^\star+d_1)^T\\ \vdots\\ (C_nx^\star+d_n)^T
% \end{bmatrix}.
% \end{equation}
\begin{small}
\begin{equation}\label{matrix_for_nondegenrate}
\nabla^2 f(x^\star)+\sum_i w_i^\star C_i+ \begin{bmatrix}
C_1x^\star+d_1,\cdots, C_nx^\star+d_n
\end{bmatrix}(Q(\nabla^2 \phi(z^\star))^{-1}Q^T)^{-1}
\begin{bmatrix}
(C_1x^\star+d_1)^T\\ \vdots\\ (C_nx^\star+d_n)^T
\end{bmatrix}
\end{equation}
\end{small}

Without loss of generality, we can assume $I_i(x_i^0)=0$ $\forall i$. As $L(x^t,z^t,w^t)$ is decreasing over $t$ if $\rho$ is large enough, by selecting $(z^0,w^0)$ such that $A(x^0)+Qz^0=0$ and $Q^Tw^0=-\nabla \phi(z^0)$, strong convexity and smoothness of $f$ and $\phi$ imply
\begin{equation*}
\begin{aligned}
&f(x_f^\star)+\phi(z_\phi^\star)+L_f(\|x_f^\star\|^2+\|x^0\|^2)+L_\phi(\|z_\phi^\star\|^2+\|z^0\|^2)\\
   &\geq f(x_f^\star)+\phi(z_\phi^\star)+\frac{L_f}{2}\|x_f^\star-x^0\|^2+\frac{L_\phi}{2}\|z_\phi^\star-z^0\|^2\\
   &\geq f(x^0)+\phi(z^0)=L(x^0,z^0,w^0)\\
   &\geq L(x^\star,z^\star,w^\star)=f(x^\star)+\phi(z^\star)\\
   &\geq f(x_f^\star)+\phi(z_\phi^\star)+\frac{\mu_f}{2}\|x^\star-x_f^\star\|^2+\frac{\mu_\phi}{2}\|z^\star-z_\phi^\star\|^2.   
\end{aligned}
\end{equation*}
In the above expression, the first inequality is due to $a^2+b^2\geq (a-b)^2/2$, the second inequality is due to the smoothness of $f$ and $\phi$, the third inequality is because $L(x^k,z^k,w^k)$ is decreasing over $k$, and the last inequality is due to strong convexity of $f$ and $\phi$.

The above expression implies
\begin{equation*}
    \frac{\mu_f}{2}\|x^\star-x_f^\star\|^2+\frac{\mu_\phi}{2}\|z^\star-z_\phi^\star\|^2\leq L_f(\|x_f^\star\|^2+\|x^0\|^2)+L_\phi(\|z_\phi^\star\|^2+\|z^0\|^2),
\end{equation*}
which provides upper bounds for $\|x^\star-x_f^\star\|^2$ and $\|z^\star-z_\phi^\star\|^2$, i.e., 
\begin{equation}\label{x_star_bound_square}
    \|x^\star-x_f^\star\|^2\in\mathcal{O}\left(\frac{L_f}{\mu_f}(\|x_f^\star\|^2+\|x^0\|^2)+\frac{L_\phi}{\mu_f}(\|z_\phi^\star\|^2+\|z^0\|^2)\right).
\end{equation}
and
\begin{equation}\label{z_star_bound_square}
    \|z^\star-z_\phi^\star\|^2\in\mathcal{O}\left(\frac{L_f}{\mu_\phi}(\|x_f^\star\|^2+\|x^0\|^2)+\frac{L_\phi}{\mu_\phi}(\|z_\phi^\star\|^2+\|z^0\|^2)\right).
\end{equation}
They also give upper bound for $\|x^\star-x_f^\star\|$ and $\|z^\star-z_\phi^\star\|$, i.e., 
\begin{equation}\label{x_star_bound}
    \|x^\star-x_f^\star\|\in\mathcal{O}\left(\sqrt{\frac{L_f}{\mu_f}}(\|x_f^\star\|+\|x^0\|)+\sqrt{\frac{L_\phi}{\mu_f}}(\|z_\phi^\star\|+\|z^0\|)\right),
\end{equation}
and
\begin{equation}\label{z_star_bound}
    \|z^\star-z_\phi^\star\|\in\mathcal{O}\left(\sqrt{\frac{L_f}{\mu_\phi}}(\|x_f^\star\|+\|x^0\|)+\sqrt{\frac{L_\phi}{\mu_\phi}}(\|z_\phi^\star\|+\|z^0\|)\right).
\end{equation}
From the first Equation of \cref{stationary_eqaulity} and the fact that $Q$ is full row rank, an upper bound of $\|w^\star\|$ will be

\begin{equation}\label{w_star_bound}
\begin{aligned}
\|w^\star\|&=\|(QQ^T)^{-1}Q\nabla \phi(z^\star)\|\leq \|(QQ^T)^{-1}Q\|\cdot \|\phi(z^\star)\|\\
&=\|(QQ^T)^{-1}Q\|\cdot\|\phi(z^\star)-\phi(z_\phi^\star)\|\\
&\leq L_\phi\|(QQ^T)^{-1}Q\|\|z^\star-z_\phi^\star\|\\
&\in \mathcal{O}\Big(\|(QQ^T)^{-1}Q\|L_\phi\big(\sqrt{\frac{L_f}{\mu_\phi}}(\|x_f^\star\|+\|x^0\|)+\sqrt{\frac{L_\phi}{\mu_\phi}}(\|z_\phi^\star\|+\|z^0\|)\big)\Big).
\end{aligned}
\end{equation}
By the definition of $A(x)$, we have
\begin{equation*}\label{A_x0_square_bound}
\begin{aligned}
\|A(x^0)\|^2&=\sum_{i=1}^{n_c} \Big(\frac{1}{2}(x^0)^TC_ix^0+d_i^Tx^0+e_i\Big)^2\in\mathcal{O}\Big(n_c(\|x^0\|^4\|C\|^2+\|x^0\|^2\|d\|^2+\|e\|^2)\Big),    
\end{aligned}
\end{equation*}
where $\|C\|:=\max_i\|C_i\|$, $\|d\|:=\max_i\|d_i\|$ and $\|e\|:=\max_i|e_i|$. Consequently, 
\begin{equation}\label{A_x0_bound}
    \|A(x^0)\|\in\mathcal{O}\Big(\sqrt{n_c}(\|x^0\|^2\|C\|+\|x^0\|\|d\|+\|e\|)\Big).  
\end{equation}
Notice that $A(x^0)+Qz^0=0$ and $Q$ is full row rank, then $z^0=-Q^+A(x^0)+v$, where $Q^+$ is the Moore-Penrose pseudo-inverse and $v\in\mathrm{ker}(Q)$. Thus, there exists $z^0$ such that
\begin{equation*}
       \|z^0\|=\|Q^+A(x^0)\|\leq\|Q^+\|\|A(x^0)\|=\frac{1}{\sqrt{\lambda_{min}(QQ^T)}}\|A(x^0)\|. 
\end{equation*}
From the bound of $\|A(x^0)\|$ in \cref{A_x0_bound}, $\|z^0\|$ can be upper bounded as
\begin{small}
\begin{equation}\label{z_0_bound}
    \|z^0\|\leq\frac{1}{\sqrt{\lambda_{min}(QQ^T)}}\|A(x^0)\|\in\mathcal{O}\Big(\sqrt{\frac{n_c}{\lambda_{min}(QQ^T)}}(\|x^0\|^2\|C\|+\|x^0\|\|d\|+\|e\|)\Big). 
\end{equation}
\end{small}
From \cref{w_star_bound} and \cref{z_0_bound}, an upper bound of $\|w^\star\|$ can be rewritten as
\begin{small}
\begin{equation*}
   \mathcal{O}\left(\|(QQ^T)^{-1}Q\|L_\phi\Big(\sqrt{\frac{L_f}{\mu_\phi}}(\|x_f^\star\|+\|x^0\|)+\sqrt{\frac{L_\phi}{\mu_\phi}}\big(\|z_\phi^\star\|+\sqrt{\frac{n_c}{\lambda_{min}(QQ^T)}}(\|x^0\|^2\|C\|+\|x^0\|\|d\|+\|e\|)\big)\Big)\right).
\end{equation*}
\end{small}
% From \cref{x_star_bound_square}-\cref{A_x0_bound}, we obtain
% \begin{equation}\label{x_star_bound_square_2}
%     \|x^\star-x_f^\star\|^2\in\mathcal{O}\left(\frac{L_f}{\mu_f}(\|x_f^\star\|^2+\|x^0\|^2)+\frac{L_\phi}{\mu_f}\|z_\phi^\star\|^2+n_z\frac{L_\phi}{\mu_f}\|Q^{-1}\|^2\big(\|x^0\|^4\|C\|^2+\|x^0\|^2\|d\|^2+\|e\|^2\big)\right), 
% \end{equation}
% and consequently, 
% \begin{equation}\label{x_star_bound_2}
%     \|x^\star-x_f^\star\|\in\mathcal{O}\left(\sqrt{\frac{L_f}{\mu_f}}(\|x_f^\star\|+\|x^0\|)+\sqrt{\frac{L_\phi}{\mu_f}}\|z_\phi^\star\|+\sqrt{n_z}\sqrt{\frac{L_\phi}{\mu_f}}\|Q^{-1}\|\big(\|x^0\|^2\|C\|+\|x^0\|\|d\|+\|e\|\big)\right).
% \end{equation}
% Combining \cref{w_star_bound}, \cref{x_star_bound_square_2} and \cref{x_star_bound_2}, and using the Big O notation, we  have
% \begin{equation*}
% \begin{aligned}
% \|w^\star\|\in&\mathcal{O}\left(L_\phi\sqrt{n_z}\|Q^{-1}\|^2\left[\|C\|\left(\frac{L_f}{\mu_f}(\|x_f^\star\|^2+\|x^0\|^2)+\frac{L_\phi}{\mu_f}\|z_\phi^\star\|^2+n_z\frac{L_\phi}{\mu_f}\|Q^{-1}\|^2(\|x^0\|^4\|C\|^2+\|x^0\|^2\|d\|^2+\|e\|^2)\right) \right.\right.\\
%  &\left.+\|d\|(\sqrt{\frac{L_f}{\mu_f}}(\|x_f^\star\|+\|x^0\|)+\sqrt{\frac{L_\phi}{\mu_f}}\|z_\phi^\star\|+\sqrt{n_z}\sqrt{\frac{L_\phi}{\mu_f}}\|Q^{-1}\|(\|x^0\|^2\|C\|+\|x^0\|\|d\|+\|e\|))+\|e\|\right]\\
%  &+L_\phi\|Q^{-1}\|\|z_\phi^\star\|\bigg).   
% \end{aligned}
% \end{equation*}
In addition, as $\|\sum_{i=1}^{n_c} w_i^\star C_i\|\in\mathcal{O}\big(\sqrt{n_c}\|w^\star\|\|C\|\big)$, it can be bounded as follows 
%In consequence, $\|\sum_{i=1}^{n_c} w_i^\star C_i\|$ is upper-bounded by
\begin{small}
\begin{equation}\label{wC_upper_bound}
\begin{aligned}
    &\mathcal{O}\left(\|(QQ^T)^{-1}Q\|\sqrt{n_c}L_\phi\|C\|\Big(\sqrt{\frac{L_f}{\mu_\phi}}(\|x_f^\star\|+\|x^0\|)\right.\\
    &\left.\quad\quad +\sqrt{\frac{L_\phi}{\mu_\phi}}\big(\|z_\phi^\star\|+\sqrt{\frac{n_c}{\lambda_{min}(QQ^T)}}(\|x^0\|^2\|C\|+\|x^0\|\|d\|+\|e\|)\big)\Big)\right).
\end{aligned}
\end{equation}
\end{small}
Note that since $f$ is $\mu_f$-strongly convex, if $\|\sum_{i=1}^n w_i^\star C_i\|<\mu_f$, then the matrix in \cref{matrix_for_nondegenrate} will be positive definite. 
So the Hessian at the stationary point is invertible.
%and thus the stationary point is non-degenerate.
This can be ensured by letting each of the above terms to be $\mathcal{O}(\mu_f)$, i.e.,
\begin{small}
\begin{equation}\label{list_of_upper_bound}
\begin{aligned}
    \|(QQ^T)^{-1}Q\|\sqrt{n_c}L_\phi\|C\|\Big(\sqrt{\frac{L_f}{\mu_\phi}}(\|x_f^\star\|+\|x^0\|)+\sqrt{\frac{L_\phi}{\mu_\phi}}\|z_\phi^\star\|\Big)\in\mathcal{O}(\mu_f),\\
    \|(QQ^T)^{-1}Q\|\|C\|\sqrt{\frac{L_\phi^3}{\mu_\phi}}\sqrt{\frac{n_c^2}{\lambda_{min}(QQ^T)}}(\|x^0\|\|d\|+\|e\|)\in\mathcal{O}(\mu_f),\\
    \|(QQ^T)^{-1}Q\|\|C\|^2\sqrt{\frac{L_\phi^3}{\mu_\phi}}\sqrt{\frac{n_c^2}{\lambda_{min}(QQ^T)}}\|x^0\|^2\in\mathcal{O}(\mu_f).
\end{aligned}
\end{equation}
\end{small}
The above inequalities lead to the following condition.

\begin{small}
\begin{equation}\label{Q_condition_proof0}
\begin{aligned}
   \|C\|\in \mathcal{O}&\left(\min\Big\{\frac{\mu_f\sqrt{\mu_\phi}}{\sqrt{n_c}L_\phi\left(\sqrt{L_f}(\|x_f^\star\|+\|x^0\|)+\sqrt{L_\phi}\|z_\phi^\star\|\right)}\|(QQ^T)^{-1}Q\|^{-1},\right.\\
   &\left.\left.\frac{\mu_f\sqrt{\mu_\phi}}{\sqrt{L_\phi^3n_c^2}(\|x^0\|\|d\|+\|e\|)}(\lambda_{min}(QQ^T))^{1/2}\|(QQ^T)^{-1}Q\|^{-1}\right.\right.,\\
   &\left.\Big(\frac{\mu_f\sqrt{\mu_\phi}}{\sqrt{L_\phi^3n_c^2}\|x^0\|^2}\Big)^{1/2}(\lambda_{min}(QQ^T))^{1/4}\|(QQ^T)^{-1}Q\|^{-1/2}\Big\}\right).
\end{aligned}
\end{equation}
\end{small}

By defining
\begin{small}
\begin{equation*}
\begin{aligned}
    m_1&:=\frac{\mu_f\sqrt{\mu_\phi}}{\sqrt{n_c}L_\phi\left(\sqrt{L_f}(\|x_f^\star\|+\|x^0\|)+\sqrt{L_\phi}\|z_\phi^\star\|\right)},\\
    m_2&:=\frac{\mu_f\sqrt{\mu_\phi}}{\sqrt{L_\phi^3n_c^2}(\|x^0\|\|d\|+\|e\|)},\\
    m_3&:=\Big(\frac{\mu_f\sqrt{\mu_\phi}}{\sqrt{L_\phi^3n_c^2}\|x^0\|^2}\Big)^{1/2},
\end{aligned}
\end{equation*}
\end{small}
we obtain the condition in \cref{Q_condition}.

% It is noteworthy that in the case that $Q\in\mathbb{R}^{m\times n_z}$ is not invertible but injective, the above condition will be 
% \begin{equation}\label{Q_condition_notinvertible}
% \begin{aligned}
%     \|(Q^TQ)^{-1}\|&\in\mathcal{O}\left(\min\Big\{\sqrt{\frac{\mu_f^2}{n_zL_\phi(L_f(\|x_f^\star\|^2+\|x^0\|^2)+L_\phi\|z_\phi^\star\|^2)}}\|C'\|^{-1},\sqrt{\frac{\mu_f}{n_zL_f\|x^0\|^2}}\|C'\|^{-1},\right.\\
%     &\left.\sqrt{\frac{\mu_f^{3/2}}{n_zL_\phi\|d'\|(L_f^{1/2}(\|x_f^\star\|+\|x^0\|)+L_\phi^{1/2}\|z_\phi^\star\|)}}\|C'\|^{-1/2},\sqrt{\frac{\mu_f}{n_zL_\phi(\|x^0\|\|d'\|+\|e'\|)}}\|C'\|^{-1/2},\right.\\
%     &\left.\sqrt{\frac{\mu_f}{n_zL_\phi\|x^0\|^{4/3}\|d'\|^{2/3}}}\|C'\|^{-2/3},\sqrt{\frac{\mu_f}{n_zL_\phi\|d'\|^{2/3}(\|x^0\|\|d'\|+\|e'\|)^{2/3}}}\|C'\|^{-1/3}\Big\}\right),     
% \end{aligned}
% \end{equation}
% where $\|C'\|:=\max_i\|C_i'\|$, $\|d'\|:=\max_i\|d_i'\|$, $C_i':=\sum_{j=1}^{n_z} q_{ji}C_j$, $d_i':=\sum_{j=1}^{n_z} q_{ji}d_j$, and $e_i':=\sum_{j=1}^m q_{ji}e_j$ and $q_{ji}$ is the $(j,i)$ element of $Q$.

If \cref{Q_condition_proof0} is satisfied, the stationary point is non-degenerate (meaning that the Hessian is invertible at that point). 
In consequence, the stationary point is also isolated. This can be seen by applying the Local Inversion Theorem presented in \cref{clarke_inverse} using $u_0=(x^\star,z^\star,w^\star)$ and $g=\nabla L$.
This lemma implies that every stationary point of the Lagrangian function is isolated if it is non-degenerate. 
As a result, if \cref{Q_condition_proof0} holds,
\begin{equation*}
    \mathrm{ker} \nabla^2 f(\tilde x)=T_{\tilde x}S=\{\mathbf{0}\},
\end{equation*}
where $\tilde{x}$ is a stationary point, $S$ is the set of stationary points, and $T_{\tilde x}S$ is the tangent space of $S$ at $\tilde x$.
Therefore, according to \cref{feehan1} and \cref{feehan2}, the Lagrangian is a Morse-Bott function, and it satisfies the $\alpha$-PL inequality around every stationary point with $\alpha=\textcolor{blue}{2}$, i.e., 

 \begin{theorem}\label{thm:extra_pl}
     Suppose that the \cref{Q_condition} holds, then for every stationary point $(x^\star,z^\star,w^\star)$ of $L$ such that  $L$ is second order differentiable at this point, there exists constants $C$ and $r>0$, s.t.
     \begin{equation*}
         \|\nabla L(x,z,w)\|^2\geq C|L(x,z,w)-L(x^\star,z^\star,w^\star)|,\quad  \forall (x,z,w)\in \mathcal{B}(x^\star,z^\star,w^\star;r).
     \end{equation*}
 \end{theorem}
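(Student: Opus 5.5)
The plan is to reduce Theorem~\ref{thm:extra_pl} to a Łojasiewicz inequality with exponent $1/2$ at a nondegenerate critical point of a $C^2$ function, using second-order Taylor expansions of $L$ and $\nabla L$. Fix a stationary point $u^\star=(x^\star,z^\star,w^\star)$ at which $L$ is second-order differentiable. As observed at the start of the proof of Theorem~\ref{theo_linear}, this forces $x^\star$ to lie in the interior of $\prod_i X_i$. Hence on a small ball $\mathcal{B}(u^\star;r_0)$ all indicators vanish, and $L$ coincides there with the smooth function
\begin{equation*}
\tilde L(x,z,w)=f(x)+\phi(z)+\langle w,A(x)+Qz\rangle+\tfrac{\rho}{2}\|A(x)+Qz\|^2 .
\end{equation*}
This function is $C^2$ because $f,\phi\in C^2$ and $A$ is quadratic. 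So $\partial L=\{\nabla\tilde L\}$ on this ball, and it suffices to prove the inequality for $\tilde L$.

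\textbf{Step 1: nondegeneracy.} Let $H:=\nabla^2\tilde L(u^\star)$. I would show $H$ is invertible under \eqref{Q_condition}, via the Schur complement argument already given.
\begin{itemize}
\item The $\rho\sum_i H_i$ term does not change the kernel, since it is a multiple of the constraint rows.
\item The block $\begin{bmatrix}\nabla^2\phi(z^\star)&Q^T\\ Q&0\end{bmatrix}$ is invertible because $Q$ has full row rank and $\nabla^2\phi\succeq\mu_\phi I$.
\item The remaining Schur complement \eqref{matrix_for_nondegenrate} equals $\nabla^2 f(x^\star)+\sum_i w_i^\star C_i$ plus a positive semidefinite term. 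Since $\|\sum_i w_i^\star C_i\|<\mu_f$ by the bound \eqref{wC_upper_bound} together with \eqref{Q_condition}, this matrix is positive definite.
\end{itemize}
Hence $\sigma:=\sigma_{\min}(H)>0$. By Lemma~\ref{clarke_inverse} applied to $g=\nabla\tilde L$, the point $u^\star$ is an isolated critical point.

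\textbf{Step 2: the two Taylor bounds.} Set $u=(x,z,w)$ and $\delta=u-u^\star$.

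For the gradient, $\nabla\tilde L(u^\star)=0$ and continuity of $\nabla^2\tilde L$ give $\nabla\tilde L(u)=H\delta+o(\|\delta\|)$. Shrinking $r$ so that the remainder is at most $\tfrac{\sigma}{2}\|\delta\|$ yields
\begin{equation*}
\|\nabla\tilde L(u)\|\ge \tfrac{\sigma}{2}\|\delta\| .
\end{equation*}

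For the function value, $\tilde L(u)-\tilde L(u^\star)=\tfrac12\delta^T H\delta+o(\|\delta\|^2)$, so
\begin{equation*}
|\tilde L(u)-\tilde L(u^\star)|\le \|H\|\,\|\delta\|^2
\end{equation*}
for $r$ small. Combining the two bounds gives $\|\nabla L(u)\|^2\ge \frac{\sigma^2}{4\|H\|}\,|L(u)-L(u^\star)|$ on $\mathcal{B}(u^\star;r)$, so we may take $C=\sigma^2/(4\|H\|)$. Equivalently, this is the Morse--Bott case of Definition~\ref{feehan1} with trivial critical manifold, and one could instead quote Theorem~\ref{feehan2}.

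\textbf{Main obstacle.} The delicate step is Step 1, specifically justifying the Schur complement reduction. The Hessian $H$ is indefinite, since the $w$-block is zero, so the argument needs the correct sign of the complementary term. I would verify it directly: if $H\delta=0$, solve the $(z,w)$ equations for $(\delta_z,\delta_w)$ in terms of $\delta_x$, substitute into the $x$-equation, and take the inner product with $\delta_x$. Positive definiteness of $\nabla^2 f+\sum_i w_i^\star C_i$ then forces $\delta_x=0$, and full row rank of $Q$ forces $\delta_z=0$ and $\delta_w=0$.

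A secondary point is ensuring the interior claim, i.e.\ that second-order differentiability at $u^\star$ implies the indicators vanish nearby. I would take this as given, as the paper does.
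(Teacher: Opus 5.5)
Your proposal is correct and follows the paper's route. You show the Hessian at $u^\star$ is nonsingular through the Schur complement \eqref{matrix_for_nondegenrate}, using $\|\sum_i w_i^\star C_i\|<\mu_f$ from \eqref{wC_upper_bound} and \eqref{Q_condition}, and then derive the $\alpha=2$ inequality at a nondegenerate critical point. Your direct Taylor estimates $\|\nabla\tilde L(u)\|\ge\tfrac{\sigma}{2}\|\delta\|$ and $|\tilde L(u)-\tilde L(u^\star)|\le\|H\|\|\delta\|^2$ replace the paper's appeal to Lemma~\ref{clarke_inverse} and the Morse--Bott result of Theorem~\ref{feehan2}, so the isolation step becomes unnecessary.

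One caveat you share with the paper: the bound \eqref{wC_upper_bound} on $\|w^\star\|$ is derived from $L(x^0,z^0,w^0)\ge L(x^\star,z^\star,w^\star)$, i.e.\ from monotone decrease of ADMM started at a feasible point. Step~1 is therefore justified only for the ADMM limit point, or any stationary point with $f(x^\star)+\phi(z^\star)\le L(x^0,z^0,w^0)$, not for an arbitrary stationary point as the statement literally claims.
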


From \cref{L_iteration_difference} and the above result, for $k$ large enough so that $(x^k,z^k,w^k)\in\mathcal{B}(x^\star,z^\star,w^\star;r)$, we have
 \begin{equation*}
 \begin{aligned}
      &L(x^{k+1},z^{k+1},w^{k+1})-L(x^k,z^k,w^{k})\leq-\frac{aC}{b^2}\big(L(x^{k+1},z^{k+1},w^{k+1})-L(x^\star,z^\star,w^\star)\big),\\  
      \implies &L(x^{k+1},z^{k+1},w^{k+1})-L(x^\star,z^\star,w^\star)\leq\left(1+\frac{aC}{b^2}\right)^{-1}(L(x^{k},z^{k},w^{k})-L(x^\star,z^\star,w^\star)),\\
      \implies &L(x^{k},z^{k},w^{k})-L(x^\star,z^\star,w^\star)=O(c^{-k}),
 \end{aligned}
 \end{equation*}
where $c:=(1+\frac{aC}{b^2})>1$. 
From Theorem 3 of \cite{bento2024convergence}, the convergence of iterates $(x^k,z^k,w^k)$ is

\begin{equation*}
    \|x^k-x^\star\|^2+\|z^k-z^\star\|^2+\|w^k-w^\star\|^2\in\mathcal{O}(c^{-k}).
\end{equation*}

Lastly, consider the second partial derivative of the Lagrangian with respect to $x$ and $z$ at $(x^\star,z^\star,w^\star)$, 
\begin{equation*}
\begin{bmatrix} 
\nabla^2 f(x^\star)+\sum_{i}w_i^\star C_i & \mathbf{0}\\ 
\mathbf{0} & \nabla^2 \phi(z^\star) 
\end{bmatrix}\succ \mathbf{0}.
\end{equation*}
From proposition 3.3.2 of \cite{bertsekas1997nonlinear}, the second-order sufficiency condition is satisfied and thus, the point $(x^\star,z^\star)$ is the local minimum of the problem \ref{problem}.
\qed

\vspace{.5cm}
As a corollary of Theorem \ref{theo_linear}, if the limiting point is of the form $(x^\star,z^\star,\mathbf{0})$, then the linear convergence rate of the ADMM is ensured, i.e.,  

\begin{corollary}\label{theo_w_0}
    Under the assumptions of \cref{theo_sublinear}, if the iterates of the ADMM converge to $(x^\star,z^\star,w^\star)$ with $w^\star=\mathbf{0}$, then, there is $c_1>1$ such that
    \begin{equation*}
        L(x^k,z^k,w^k)-L(x^\star,z^\star,w^\star)\in\mathcal{O}(c_1^{-k}).
    \end{equation*}
\end{corollary}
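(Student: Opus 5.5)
The plan is to reuse the machinery of the proof of \cref{theo_linear}, with the quantitative condition \cref{Q_condition} replaced by the hypothesis $w^\star=\mathbf{0}$. The key observation is that the only place \cref{Q_condition} enters is to make $\|\sum_i w_i^\star C_i\|<\mu_f$, which guarantees that the reduced matrix \cref{matrix_for_nondegenrate} is positive definite. When $w^\star=\mathbf{0}$ this term vanishes identically. Then \cref{matrix_for_nondegenrate} reduces to
\[
\nabla^2 f(x^\star)+J^T(Q(\nabla^2\phi(z^\star))^{-1}Q^T)^{-1}J,
\]
where $J$ is the matrix with rows $(C_ix^\star+d_i)^T$. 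This matrix is at least $\mu_f I$, because the second summand is positive semidefinite: $Q$ has full row rank and $\nabla^2\phi(z^\star)\succeq\mu_\phi I$. So the Hessian of $L$ at the limit point is invertible for every $\|C\|$, with no smallness condition at all.

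The steps, in order, are as follows.
\begin{enumerate}
\item By \cref{theo_sublinear}, the iterates converge to a stationary point $(x^\star,z^\star,w^\star)$. The descent inequality \cref{L_diff_square_simp} and the subgradient bound \cref{subgraident_lipschitz_bound} hold, which together give \cref{L_iteration_difference}.
\item Compute the Hessian at the limit point as in the proof of \cref{theo_linear}. The $\rho\sum_i H_i$ part only adds $\rho\,\nabla c^T\nabla c$ with $\nabla c$ the constraint Jacobian, so it does not change invertibility of the KKT-type block matrix. A Schur complement with respect to the $(z,w)$ block, which is invertible since $Q$ has full row rank, reduces invertibility to that of the matrix displayed above.
\item Since that matrix is positive definite, the stationary point is nondegenerate. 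By \cref{clarke_inverse} applied to $g=\nabla L$, it is isolated, so the Morse--Bott condition of \cref{feehan1} holds trivially with $T\,\mathrm{Crit}=\{0\}=\ker\nabla^2L$. Then \cref{feehan2}, in the form of \cref{thm:extra_pl}, yields the $2$-PL inequality $\|\nabla L\|^2\ge C|L-L^\star|$ on a ball around the limit.
\item Once the iterates enter that ball, combine this with \cref{L_iteration_difference} exactly as in the proof of \cref{theo_linear}. This gives $L^{k+1}-L^\star\le (1+aC/b^2)^{-1}(L^k-L^\star)$, hence $c_1=1+aC/b^2>1$.
\end{enumerate}

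The main obstacle is smoothness at the limit point. \cref{feehan2} requires $L$ to be $C^2$ near $(x^\star,z^\star,w^\star)$, while the statement only assumes the hypotheses of \cref{theo_sublinear}, under which the indicators $I_i$ may be active at $x^\star$.

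In the interior case, where $x^\star\in\operatorname{int}\prod_i X_i$, the argument above goes through verbatim, as in \cref{theo_linear}.

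If some constraint is active, I would first try to handle it by restricting to the active face. Since $w^\star=\mathbf{0}$, the stationarity condition gives $0\in\nabla f(x^\star)+\partial\sum_i I_i(x^\star)$ and $\nabla\phi(z^\star)=0$. Hence $x^\star$ is exactly the unique minimizer of the strongly convex function $f+\sum_i I_i$, and $z^\star=z_\phi^\star$. Near such a point, $L$ restricted to a neighborhood behaves like a strongly convex function of $x$ plus a nondegenerate quadratic form in $(z,w)$. I would then aim to prove the KL inequality with exponent $1/2$ directly, using strong convexity of $f+\sum I_i$ for the error-bound part in $x$ and the invertible block for $(z,w)$, and bypass \cref{feehan2}.

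If this direct route proves too delicate, I would fall back on assuming second-order differentiability at the limit, as \cref{theo_linear} does. The corollary is stated as a consequence of that theorem, so this is presumably the intended reading.
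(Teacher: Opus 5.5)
Your proposal is correct and follows the paper's proof. That proof only observes that $w^\star=\mathbf{0}$ removes the $\sum_i w_i^\star C_i$ term, so the Schur-complement matrix is $\succeq \mu_f I$ and the Hessian is invertible; the rest (isolated critical point, Morse--Bott, $2$-PL inequality, contraction factor $1+aC/b^2$) is taken from the proof of Theorem~\ref{theo_linear}, exactly as you do. Your smoothness caveat is justified: that borrowed argument silently uses the second-order differentiability at the limit point (i.e.\ $x^\star$ interior to $\prod_i X_i$), which Theorem~\ref{theo_linear} assumes but the corollary does not list, so your fallback matches the paper's implicit reading, and your sketched direct route for active constraints goes beyond what the paper proves.
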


\begin{proof}
When $w_i^\star=0$, $\forall i$, the Equation \cref{matrix_for_nondegenrate} will become
\begin{small}
\begin{equation*}
\nabla^2 f(x^\star)+ \begin{bmatrix}
C_1x^\star+d_1,\cdots, C_nx^\star+d_n
\end{bmatrix}(Q(\nabla^2 \phi(z^\star))^{-1}Q^T)^{-1}
\begin{bmatrix}
(C_1x^\star+d_1)^T\\ \vdots\\ (C_nx^\star+d_n)^T
\end{bmatrix}
\succ 0.
\end{equation*}
\end{small}
and consequently, the Hessian of the Lagrangian at the stationary point will be invertible. The rest of the proof is similar to the proof of \cref{theo_linear}.
%following the result of Theorem \ref{thm:extra_pl}. 
\end{proof}

\subsection*{Proof of \cref{theo_linear_nondiff}}
Suppose that the constraint sets for $X_i$s are polyhedrals. From \cref{w_z_bound}, \cref{L_diff_square_simp} and \cref{subgraident_lipschitz_bound}, there exist $v\in\partial_{x,z} L(x^{k+1},z^{k+1},w^{k+1})$, and positive constants $a$ and $b$, such that 
\begin{align*}
    &L(x^{k+1},z^{k+1},w^{k+1})-L(x^{k},z^{k},w^{k})\leq -a(\|x^{k+1}-x^{k}\|^2+\|z^{k+1}-z^k\|^2),\\
    &\|v\|^2\leq b(\|x^{k+1}-x^{k}\|^2+\|z^{k+1}-z^{k}\|^2).
\end{align*}
This results in
\begin{equation}\label{eq:app_proof_4}
    L(x^{k+1},z^{k+1},w^{k+1})-L(x^{k},z^{k},w^{k})\leq-\frac{a}{b}\|v\|^2\leq -\frac{a}{b}\min_{s\in\partial_{x,z} L(x^{k+1},z^{k+1},w^{k+1})}\|s\|^2.
\end{equation}
Consider the function $\tilde{L}$ s.t. $L(x,z,w)=\tilde{L}(x,z,w)+\sum_{i=1}^n I_i(x_i)$. Then, when $w$ is fixed, the second-order derivative of $\tilde {L}$ is
\begin{equation*}
\begin{bmatrix} 
\nabla^2 f(x^\star)+\sum_{i}w_i^\star C_i & \mathbf{0}\\ 
\mathbf{0} & \nabla^2 \phi(z^\star) 
\end{bmatrix}.
\end{equation*}
To ensure that at the constrained stationary point $(x^\star,z^\star,w^\star)$, the above matrix is positive definite, we require that $\nabla^2 f(x^\star)+\sum_{i=1}^n w_i^\star C_i\succ 0$. Notice that at the constrained stationary point, \cref{stationary_eqaulity} still holds.
According to the proof of \cref{theo_linear}, when \cref{Q_condition_proof0} holds, then $\nabla^2 f(x^\star)+\sum_{i=1}^n w_i^\star C_i\succ 0$, and the Hessian of $\tilde L$ is positive definite. 
Consequently, the function $\tilde{L}(x,z,w)$ is locally strongly convex with respect to $(x,z)$, $\forall (x,z)\in\mathcal{B}(x^\star,z^\star;r')$ and $\forall w\in\mathcal{B}(w^\star,r'')$, for some $r'>0$ and $r''>0$. 
Since $\mathrm{dom}(L)$ is polyhedral, and $L(x,z,w)=\tilde{L}(x,z,w)+\sum_{i=1}^n I_i(x_i)$, according to the results in Appendix F of \cite{karimi2016linear}, the $\alpha$-PL inequality holds for $\alpha=2$, i.e.,
\begin{align*}
    \min_{s\in\partial_{x,z} L(x,z,w)}\|s\|^2 & \geq 2\mu\big(L(x,z,w)-\min_{(x,z)\in \mathcal{B}(x^\star,z^\star;r')}L(x,z,w)\big), \\
    &\forall (x,z)\in \mathcal{B}(x^\star,z^\star;r')\ \mathrm{\ and\ }\ \forall w\in\mathcal{B}(w^\star,r'').
\end{align*}
On the other hand, since $(x^k,z^k,w^k)\to(x^\star,z^\star,w^\star)$, for $k$ that is large enough, i.e. $k\geq K$, there exists $r>0$ such that $\mathcal{B}(x^k,z^k;r)\subseteq\mathcal{B}(x^K,z^K;2r)$, $\mathcal{B}(x^k,z^k;2r)\subseteq \mathcal{B}(x^\star,z^\star;r')$ and $w^k\in\mathcal{B}(w^\star,r'')$. As a result, 
\begin{equation*}
\begin{aligned}
    \min_{s\in\partial_{x,z} L(x^k,z^k,w)}\|s\|^2&\geq 2\mu(L(x^k,z^k,w)-\min_{(x,z)\in \mathcal{B}(x^\star,z^\star;r')}L(x,z,w)), \\
    &\geq 2\mu(L(x^k,z^k,w)-\min_{(x,z)\in \mathcal{B}(x^k,z^k;2r)}L(x,z,w)), \quad k\geq K.
\end{aligned}
\end{equation*}
Combining the above inequality with \cref{eq:app_proof_4} yields that there exists $C_1>0$ such that for $k\geq K$,
\begin{equation*}
    L(x^{k+1},z^{k+1},w^{k+1})-L(x^{k},z^{k},w^{k})\leq -C_1\Big(L(x^{k+1},z^{k+1},w^{k+1})-\min_{(x,z)\in \mathcal{B}(x^K,z^K;2r)}L(x,z,w^{k+1})\Big).
\end{equation*}
Note that due to the update rule in Algorithm \ref{alg:admm}, for every $x$ and $z$, we have $L(x,z,w^{k+1})-L(x,z,w^{k})=\rho\|A(x)+Qz\|^2\geq 0$, and consequently, 
\begin{equation*}
    \min_{(x,z)\in \mathcal{B}(x^K,z^K;2r)}L(x,z,w^{k+1})\geq \min_{(x,z)\in \mathcal{B}(x^K,z^K;2r)} L(x,z,w^{k}).
\end{equation*}
As a result, for $k\geq K$, the following inequality, obtained from the previous two inequalities, holds
\begin{equation*}
   (1+C_1) \Big(L(x^{k+1},z^{k+1},w^{k+1}) -\hspace{-.5cm}\min_{(x,z)\in \mathcal{B}(x^K,z^K;2r)}\hspace{-.5cm} L(x,z,w^{k+1})\Big)\leq L(x^{k},z^{k},w^{k})-\hspace{-.5cm}\min_{(x,z)\in \mathcal{B}(x^K,z^K;2r)}\hspace{-.5cm}L(x,z,w^{k}).
\end{equation*}
This implies
\begin{equation}\label{linear_convergence_nodiff}
    L(x^{k},z^{k},w^{k})-\hspace{-.6cm}\min_{(x,z)\in \mathcal{B}(x^K,z^K;2r)}\hspace{-.6cm}L(x,z,w^{k})\leq (1+C_1)^{-(k-K)}\Big(L(x^{K},z^{K},w^{K})-\hspace{-.6cm}\min_{(x,z)\in \mathcal{B}(x^K,z^K;2r)}\hspace{-.6cm}L(x,z,w^{K})\Big).
\end{equation}

As $\mathcal{B}(x^k,z^k;r)\subseteq\mathcal{B}(x^K,z^K;2r)$, from \cref{linear_convergence_nodiff}, the linear convergence of $L(x^{k},z^{k},w^{k})-\min_{(x,z)\in \mathcal{B}(x^k,z^k;r)}L(x,z,w^{k})$ can be ensured, i.e., let $c_2:=1+C_1$, then

\begin{equation*}
\begin{aligned}
       &L(x^{k},z^{k},w^{k})-\hspace{-.3cm}\min_{(x,z)\in \mathcal{B}(x^k,z^k;r)}\hspace{-.3cm}L(x,z,w^{k})\leq  L(x^{k},z^{k},w^{k})-\hspace{-.3cm}\min_{(x,z)\in \mathcal{B}(x^K,z^K;2r)}\hspace{-.3cm}L(x,z,w^{k}),\\
       &\leq (1+C_1)^{-(k-K)}\Big(L(x^{K},z^{K},w^{K})-\hspace{-.3cm}\min_{(x,z)\in \mathcal{B}(x^K,z^K;2r)}\hspace{-.3cm}L(x,z,w^{K})\Big)\in\mathcal{O}(c_2^{-k}).
\end{aligned}
\end{equation*}

To prove that $(x^*,z^*)$ is a local minimum, notice that as $k$ goes to infinity, $(x^{k},z^{k},w^{k})\to(x^{\star},z^{\star},w^{\star})$, $I_i(x_i^\star)=0,\forall i$ and  \cref{linear_convergence_nodiff} implies that for all $(x, z)\in \mathcal{B}(x^\star,z^\star;r)$
\begin{equation*}
F(x^\star)+\phi(z^\star)=f(x^\star)+\phi(z^\star)=L(x^{\star},z^{\star},w^{\star})=\min_{(x,z)\in \mathcal{B}(x^\star,z^\star;r)} L(x,z,w^\star)\leq L(x,z,w^\star).
\end{equation*}

For all the points $(x,z)\in \mathcal{B}(x^\star,z^\star;r)$ satisfying $A(x)+Qz=0$, $F(x)+\phi(z)$ can be bounded as
\begin{equation*}
    F(x^\star)+\phi(z^\star)=\!\!\!\min_{(x,z)\in \mathcal{B}(x^\star,z^\star;r)} \!\!\!\!\!\!L(x,z,w^\star)\leq L(x,z,w^\star)= F(x)+\phi(z).
\end{equation*}
And thus $(x^\star,z^\star)$ is a local minimum of problem \cref{problem}.
\qed

\subsection{Approximated ADMM}\label{app:approximated}
Consider the following algorithm. 
\begin{algorithm}[h]
\caption{Approximated-ADMM}\label{alg:app_admm}
\begin{algorithmic}
\REQUIRE $(x_1^0, \ldots, x_n^0), z^0, w^0, \rho$
\FOR{$k = 0, 1, 2, \ldots$}
    \FOR{$i = 1, \ldots, n$}
        \STATE $x_i^{k+1} \approx \arg\min_{x_i} L(x_{1:i-1}^{k+1}, x_i, x_{i+1:n}^k, z^k,w^k)$
    \ENDFOR
    \STATE $z^{k+1} \approx \arg\min_{z} L(x^{k+1}, z,w^k)$
    \STATE $w^{k+1} = w^k + \rho(A(x^{k+1})+Qz^{k+1})$
\ENDFOR
\end{algorithmic}
\end{algorithm}

\begin{theorem}\label{theo_approx}
    Under the assumptions of \cref{theo_linear}, if the Approximated ADMM in \ref{alg:app_admm} is applied to Problem \ref{problem}, and the following condition are satisfied,\\
 \textbf{P1:}  the iterates $p^k:=(x^k, z^k, w^k)$ are bounded, and $L(p^k)=L(x^k, z^k, w^k)$ is lower bounded,
\\
\textbf{P2:} there is a constant $C_1>0$ such that for all sufficiently large $k$,
\begin{align*}
& L(x^k, z^k, w^k)-L(x^{k+1}, z^{k+1}, w^{k+1}) 
%& \geq C_1\left(\left\|z^{k+1}-z^k\right\|^2+\left\|x^k-x^{k+1}\right\|^2+\|w^k-w^{k+1}\|^2\right)\\
\geq C_1\|p^{k+1}-p^{k}\|^2.
\end{align*}
\\
\textbf{P3:} and there exists $d^{k+1} \in \partial L(x^{k+1}, z^{k+1}, w^{k+1})$ and $C_2>0$ such that for all sufficiently large $k$,
\begin{equation*}
\|d^{k+1}\| \leq C_2\|p^{k+1}-p^k\|.    
\end{equation*}
%\begin{equation*}
%\|d^{k+1}\| \leq C_2(\|z^{k+1}-z^k\|+\|x^{k+1}-%x^k\|+\|w^k-w^{k+1}\|).    
%\end{equation*}
Then, the convergence results of \cref{theo_sublinear}, \cref{theo_linear} and \cref{theo_linear_nondiff} remain valid under their respective additional assumptions.
\end{theorem}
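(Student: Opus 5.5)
The plan is to observe that the proofs of \cref{theo_sublinear}, \cref{theo_linear} and \cref{theo_linear_nondiff} use the exactness of the subproblem solutions only through three consequences. These are boundedness of the iterates together with a lower bound on $L$, the sufficient-decrease inequality \cref{L_diff_square_simp}, and the relative-error bound \cref{subgraident_lipschitz_bound} on some subgradient $v^{k+1}\in\partial L(p^{k+1})$. Properties \textbf{P1}--\textbf{P3} are exactly these three facts, holding for all sufficiently large $k$. So I would re-run each proof from the point where these facts were derived, replacing each derived inequality by the corresponding assumed property. Every constant is then shifted to start at some index $K_0$, and the initial segment $k<K_0$ only affects the hidden constants in $\mathcal{O}(\cdot)$ and $o(\cdot)$.

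For the sublinear part, the steps go in the following order.
\begin{enumerate}
\item From \textbf{P1} and \textbf{P2}, $L(p^k)$ is eventually nonincreasing and bounded below, hence convergent. Summing \textbf{P2} gives $\sum_k\|p^{k+1}-p^k\|^2<\infty$, so $\|p^{k+1}-p^k\|\to0$.
\item By \textbf{P1} a cluster point $p^\star$ exists. By \textbf{P3}, $d^{k}\to0$. Closedness of the limiting subdifferential of the lower semicontinuous function $L$ (values converge along the subsequence, since $f,\phi,A$ are continuous and the iterates stay in the closed sets $X_i$) then gives $0\in\partial L(p^\star)$.
\item \cref{lower_semicontinuous_clarke} gives the $\alpha$-PL property at $p^\star$. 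Combining \textbf{P2} and \textbf{P3} yields $L(p^{k})-L(p^{k+1})\ge (C_1/C_2^2)\,\mathrm{dist}(0,\partial L(p^{k+1}))^2$, which is precisely \cref{L_iteration_difference}.
\item Theorems 1--3 of \cite{bento2024convergence} (the standard KL finite-length argument) give convergence of the whole sequence and the $o(1/k)$ rate.
\end{enumerate}

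For the linear parts, the Hessian nondegeneracy argument in the proof of \cref{theo_linear} concerns only the limit point. The same holds for the local strong convexity of $\tilde L$ in \cref{theo_linear_nondiff}. Both use the bounds on $\|w^\star\|$, and these came from the decrease of $L$ and from $Q^Tw^\star=-\nabla\phi(z^\star)$. The latter identity holds at any stationary point, since $\nabla_zL(p^\star)=0$ and $A(x^\star)+Qz^\star=0$. The former follows from \textbf{P2} once we start at a feasible initialization. Hence \cref{thm:extra_pl} (respectively the Appendix F result of \cite{karimi2016linear}) applies unchanged near $p^\star$. Inserting it into the inequality from step 3 reproduces the geometric recursion, giving the $\mathcal{O}(c^{-k})$ rate. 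In the nondifferentiable case we additionally reuse the monotonicity $L(x,z,w^{k+1})\ge L(x,z,w^k)$. That monotonicity came only from the $w$-update, which is still exact in Algorithm \ref{alg:app_admm}.

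The main obstacle I expect is this: the earlier proofs used facts that were consequences of exact minimization but are not listed in \textbf{P1}--\textbf{P3}. The most important one is the identity $Q^Tw^k=-\nabla\phi(z^k)$ along the iterates, which was used to bound $\|w^k\|$ and $\|w^\star\|$. I would handle this by using the identity only at the limit, where it holds by stationarity, and by taking boundedness of $w^k$ directly from \textbf{P1}. The bound \cref{x_star_bound_square} on $\|x^\star-x_f^\star\|$ needs $L(p^\star)\le L(p^0)$. Since \textbf{P2} holds only for large $k$, I would either assume monotonicity from $k=0$ or restate the constants $m_i$ in \cref{Q_condition} with $L(p^{K_0})$ in place of $L(p^0)$. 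This changes the constants $m_i$ but not the form of the condition.
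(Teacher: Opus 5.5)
Your proposal is correct and follows essentially the same argument as the paper: \textbf{P2} and \textbf{P3} combine into \cref{L_iteration_difference}, \textbf{P1}--\textbf{P2} give convergence of $L(p^k)$, vanishing steps and a cluster point, and the rest of the earlier proofs is reused unchanged. Your extra remarks make explicit what the paper's short proof leaves implicit: you use $Q^Tw^\star=-\nabla\phi(z^\star)$ only at the stationary limit, and you note that the bound on $\|x^\star-x_f^\star\|$ needs descent from $k=0$. One caution: the $w$-monotonicity you reuse for \cref{theo_linear_nondiff} does not hold for all $(x,z)$, since $L(x,z,w^{k+1})-L(x,z,w^k)=\rho\langle A(x^{k+1})+Qz^{k+1},A(x)+Qz\rangle$ can be negative, though this flaw is inherited from the paper's proof of that theorem rather than introduced by the inexactness.
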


\begin{proof}
Remind that these presented conditions in this theorem are the key element to prove \cref{theo_sublinear}. In exact ADMM, the condition P2 and P3 are satisfied in \cref{L_diff_square} and \cref{subgraident_lipschitz_bound}. Conditions P2 and P3 imply that there exists $d^k\in\partial L(x^k,z^k,w^k)$ such that
\begin{equation*}
\begin{aligned}
        L(x^{k+1},z^{k+1},w^{k+1})-L(x^k,z^k,w^k)&\leq -C_1(\|x^{k+1}-x^{k}\|^2+\|z^{k+1}-z^{k}\|^2+\|w^{k+1}-w^{k}\|^2)\\
        &\leq -\frac{C_1}{C_2^2}\|d^{k+1}\|^2\leq -\frac{C_1}{C_2^2}(dist(\mathbf{0},\partial L(x^{k+1},z^{k+1},w^{k+1})))^2,
\end{aligned}
\end{equation*}
which is precisely the \cref{L_iteration_difference} in the Proof of \cref{theo_sublinear}. From condition P1 and P2, the Lagrangian $L(x^k,z^k,w^k)$ is lower bounded and non-increasing, which indicates $L(p^k)$ converges as $k$ goes to infinity. Then, from P2, we know,
\begin{equation*}
    \|x^{k+1}-x^{k}\|\to 0,\quad \|z^{k+1}-z^{k}\|\to 0,\quad \|w^{k+1}-w^{k}\|\to 0.
\end{equation*}
Based on P1, the iterates $(x^k,z^k,w^k)$ are bounded, thus,  the limit point exists. The rest of the proof is identical with the proof of \cref{theo_sublinear}.
\end{proof}

% Recall the problem~(1) and the augmented Lagrangian \(L\) in~(3).
% Assumption~2.3 (strong convexity of \(f\) and \(\varphi\)), Assumption~2.6 (full row rank of \(Q\)),
% and the smoothness assumptions used in Theorems~3.1--3.3 remain in force.

\paragraph{Error models.}

Suppose there exist nonnegative sequences \(\{\epsilon^{(i)}_k\}_{k\ge 0}\) and \(\{\eta_k\}_{k\ge 0}\) such that
\begin{align}
\label{eq:e1-x}
L(x^{k+1}_{1:i-1},x^{k+1}_i,x^k_{i+1:n},z^k,w^k)
&\le
\min_{x_i}L(x^{k+1}_{1:i-1},x_i,x^k_{i+1:n},z^k,w^k)
+\epsilon^{(i)}_k,\\
\label{eq:e1-z}
L(x^{k+1},z^{k+1},w^k)
&\le
\min_z L(x^{k+1},z,w^k)+\eta_k,
\end{align}

At iteration \(k\), let
\(\widehat x^{k+1}_i\) and \(\widehat z^{k+1}\) denote the exact minimizers of the
corresponding block subproblems appearing in Algorithm~1 with the current
arguments fixed; i.e.,
\begin{align*}
\widehat x^{k+1}_i
&\in \arg\min_{x_i}\, L(x^{k+1}_{1:i-1},x_i,x^k_{i+1:n},z^k,w^k),\\
\widehat z^{k+1}
&\in \arg\min_{z}\, L(x^{k+1},z,w^k).
\end{align*}
The iterates produced by \cref{alg:app_admm} are denoted by
\(x^{k+1}_i\) and \(z^{k+1}\), with errors
\(
e^{k+1}_{x,i}:=x^{k+1}_i-\widehat x^{k+1}_i
\) and
\(
e^{k+1}_{z}:=z^{k+1}-\widehat z^{k+1}.
\)

\begin{theorem}
\label{thm:inexact}
Let Algorithm~2 produce \((x^{k},z^k,w^k)\) and assume the inexactness condition from \cref{eq:e1-x} and \cref{eq:e1-z}, then:

\begin{enumerate}
\item If \(\sum_k(\sum_i \epsilon^{(i)}_k+\eta_k)<\infty\) and the assumptions for \cref{theo_sublinear} hold, the sequence \(\{L(x^k,z^k,w^k)\}\) converges and
\(\lim_{k\to+\infty}\|A(x^{k})+Q z^{k}\|= 0\), \(\lim_{k\to+\infty}\|x^{k+1}-x^k\|= 0\), \(\lim_{k\to+\infty}\|z^{k+1}-z^k\|= 0\).
Every limit point \((x^\star,z^\star,w^\star)\) is a stationary point of \(L\), and
\(x^\star\) and \(z^\star\) satisfy the blockwise optimality conditions stated in \cref{theo_sublinear}.

\item  If further
$\sum_{i}\epsilon_k^{(i)}+\eta_k\leq C[\|x^{k+1}-x^{k}\|^2+\|z^{k+1}-z^k\|^2+\|w^{k+1}-w^k\|^2]$, where $C$ is a small enough constant, the convergence result of \cref{theo_sublinear}, \cref{theo_linear} and \cref{theo_linear_nondiff} remain valid under their respective additional assumptions.
\end{enumerate}

\end{theorem}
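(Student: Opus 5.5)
The plan is to reduce both parts of Theorem~\ref{thm:inexact} to the abstract conditions P1--P3 of Theorem~\ref{theo_approx}, or to a perturbed version of them. The main tool is an inexact sufficient-decrease inequality. The $x_i$-subproblem objective $x_i\mapsto L(x^{k+1}_{1:i-1},x_i,x^k_{i+1:n},z^k,w^k)$ is $\mu_f$-strongly convex, since $f$ is strongly convex, $I_i$ is convex, and $A$ is affine in $x_i$ so the penalty is convex in $x_i$. Hence \eqref{eq:e1-x} gives $\frac{\mu_f}{2}\|e^{k+1}_{x,i}\|^2\le\epsilon^{(i)}_k$. Repeating the block-wise computation behind \eqref{x_iterates} with $\widehat x_i^{k+1}$ as the comparison point, I expect
\[
L(\cdot,x_i^k,\cdot)-L(\cdot,x_i^{k+1},\cdot)\ \ge\ \tfrac{\mu_f}{4}\|x_i^{k+1}-x_i^k\|^2-c\,\epsilon^{(i)}_k .
\]
This follows by writing $L(\cdot,x_i^k,\cdot)-L(\cdot,x_i^{k+1},\cdot)$ as a difference through $\widehat x^{k+1}_i$, using strong convexity to get $\frac{\mu_f}{2}\|x_i^k-\widehat x_i^{k+1}\|^2$, subtracting $\epsilon^{(i)}_k$, and applying $\|a\|^2\ge\frac12\|a+b\|^2-\|b\|^2$. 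The same argument for $z$ gives a $\frac{\mu_\phi}{4}\|z^{k+1}-z^k\|^2-c\,\eta_k$ term.

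The dual step is the delicate part. Relation \eqref{w_z_bound} used the exact identity $Q^Tw^{k+1}=-\nabla\phi(z^{k+1})$. With inexact $z$ this becomes $Q^Tw^{k+1}=-\nabla\phi(z^{k+1})+r^{k+1}$, where $r^{k+1}=\nabla_zL(x^{k+1},z^{k+1},w^k)$. Because the $z$-subproblem is $C^2$ with Hessian bounded by $L_\phi+\rho\|Q\|^2$, we get $\|r^{k+1}\|\le (L_\phi+\rho\|Q\|^2)\|e^{k+1}_z\|=O(\sqrt{\eta_k})$. Then $\|w^{k+1}-w^k\|^2\le \frac{3}{\lambda^+_{\min}(Q^TQ)}\big(L_\phi^2\|z^{k+1}-z^k\|^2+\|r^{k+1}\|^2+\|r^k\|^2\big)$. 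This uses that $w^{k+1}-w^k\in\mathrm{Im}(Q)$ once $w^0$ is chosen in $\mathrm{Im}(Q)$, together with full row rank. With $\rho$ as in Theorem~\ref{theo_sublinear}, summing the three blocks yields
\[
L^k-L^{k+1}\ \ge\ a\big(\|x^{k+1}-x^k\|^2+\|z^{k+1}-z^k\|^2+\|w^{k+1}-w^k\|^2\big)-c'\Big(\textstyle\sum_i\epsilon^{(i)}_k+\eta_k+\eta_{k-1}\Big).
\]

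For part~1, I would first rerun the argument of step ii) in the proof of Theorem~\ref{theo_sublinear}, now with the relation $Q^Tw^k=-\nabla\phi(z^k)+r^k$. This shows $L^k$ is bounded below by $f(x^k)+\phi(\tilde z^k)$ minus a summable error. Combined with the summable perturbation of the decrease inequality, the standard quasi-Fej\'er lemma makes $L^k$ convergent. It also makes $\sum_k\|p^{k+1}-p^k\|^2<\infty$, so consecutive differences vanish and $A(x^k)+Qz^k=(w^k-w^{k-1})/\rho\to0$. Boundedness of the iterates follows from the level-set bound as in step ii). The subgradient $v^{k+1}$ built as in \eqref{v_i_k} acquires extra terms. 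These are bounded by $\|e_{x,i}\|$ through local Lipschitzness of $\nabla f$ and $\nabla A$ on the bounded set, and by $\|r^{k+1}\|$, and all of them tend to zero. Thus every limit point is stationary, and the blockwise optimality follows verbatim from strong duality of the convex subproblems.

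For part~2, the hypothesis $\sum_i\epsilon_k^{(i)}+\eta_k\le C\|p^{k+1}-p^k\|^2$ lets the error absorb into the decrease once $C<a/(2c')$. The lagged term $\eta_{k-1}$ needs a small modification: I would use the Lyapunov function $L^k+c'\eta_{k-1}$, or equivalently telescope with a one-step shift. This gives P2. P3 holds because the extra subgradient terms are $O(\sqrt{\epsilon+\eta})=O(\|p^{k+1}-p^k\|)$. P1 comes from part~1, since square-summability of the errors follows from the decrease. Theorem~\ref{theo_approx} then transfers the conclusions of Theorems~\ref{theo_sublinear}, \ref{theo_linear} and \ref{theo_linear_nondiff}.

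I expect the main obstacle to be the dual-error coupling, namely controlling $\|w^{k+1}-w^k\|$ through the residual $r^k$ without losing the sign of the decrease. The lagged $\eta_{k-1}$ term is where I anticipate needing the shifted Lyapunov function.
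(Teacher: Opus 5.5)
Your descent estimates follow the paper's proof essentially step by step. The shared steps are comparison with the exact block minimizers $\widehat x_i^{k+1},\widehat z^{k+1}$, the inequality $\|a\|^2\ge\frac12\|a+b\|^2-\|b\|^2$, and control of $\|w^{k+1}-w^k\|$ through the residual $\nabla_z L(x^{k+1},z^{k+1},w^k)$, which produces the lagged $\eta_{k-1}$. Your lower-boundedness argument via $Q^Tw^k=-\nabla\phi(z^k)+r^k$ is more careful than the paper's one-line assertion.

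The step that fails is the subgradient estimate you use for stationarity in part~1 and for P3 in part~2. The construction in \eqref{v_i_k} starts from the \emph{exact} optimality condition of the $x_i$-subproblem. That condition puts $-\nabla_{x_i}\tilde L(x^{k+1}_{1:i},x^k_{i+1:n},z^k,w^k)$ in $\partial I_i(x_i^{k+1})$, where $\tilde L=L-\sum_i I_i$. For an inexact iterate the analogous vector only lies in $\partial I_i(\widehat x_i^{k+1})$. The normal-cone map $\partial I_i$ is merely outer semicontinuous (it collapses to $\{0\}$ in the interior of $X_i$). So $\|x_i^{k+1}-\widehat x_i^{k+1}\|=O(\sqrt{\epsilon^{(i)}_k})$ transfers nothing to $\partial I_i(x_i^{k+1})$. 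For example, $\psi(u)=u+I_{[0,\infty)}(u)$ has $u=\epsilon$ as an $\epsilon$-optimal point, yet $\partial\psi(\epsilon)=\{1\}$.

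In the algorithm, suppose the exact block minimizer lies on a face of $X_i$ with a nonzero multiplier and the solver returns a nearby interior point. Then $\mathrm{dist}(0,\partial L(p^{k+1}))$ stays bounded away from zero while $\|p^{k+1}-p^k\|\to0$, so P3 fails however small $C$ is. Your estimate is valid only where the indicators are inactive, e.g.\ near the interior limit assumed in Theorem~\ref{theo_linear}. It does not cover general $X_i$ in Theorem~\ref{theo_sublinear}, nor the polyhedral setting of Theorem~\ref{theo_linear_nondiff}, whose purpose is precisely to allow boundary limits.

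For part~1 the conclusion can be rescued. Take the subgradient at $\widehat p^{k+1}=(\widehat x_1^{k+1},\dots,\widehat x_n^{k+1},z^{k+1},w^{k+1})$, pairing each block with its own multiplier; this is legitimate because $\sum_i I_i$ is block separable. Then $\widehat p^{k+1}-p^{k+1}\to0$, and closedness of the limiting subdifferential gives $0\in\partial L(p^\star)$. For part~2 you need either a subgradient-type (relative-error) inexactness criterion, or a KL argument run at $\widehat p^{k+1}$ with $L(p^{k+1})-L(\widehat p^{k+1})$ controlled by the errors. The paper's own proof does not supply this either; it only says ``the rest follows''.

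A smaller point: the shifted Lyapunov function does not give P2. P2 in Theorem~\ref{theo_approx} is a statement about $L$ itself, whereas what you actually have is $L^k-L^{k+1}\ge(a-c'C)\|p^{k+1}-p^k\|^2-c'C\|p^k-p^{k-1}\|^2$. Also, $L^k+c'\eta_{k-1}$ is not a function of the iterates, so the KL inequality cannot be applied to it. Take instead $\Psi(p,p')=L(p)+c''\|p-p'\|^2$ with $c'C<c''<a-c'C$. Along $\zeta^k=(p^k,p^{k-1})$ it satisfies P2, and P3 in the smooth case. It is subanalytic, its critical points are $(p^\star,p^\star)$ with value $L(p^\star)$, and the Schur complement of its Hessian there is $\nabla^2L(p^\star)$. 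Hence the arguments of Theorems~\ref{theo_sublinear} and \ref{theo_linear} do carry over, but they must be rerun for $\Psi$ rather than quoted through Theorem~\ref{theo_approx}.
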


\begin{proof}

We work under Assumption~2.3 (blockwise strong convexity of $f$ and $\varphi$), Assumption~2.6 (full row rank of $Q$), and the smoothness conditions used in \cref{theo_sublinear,theo_linear}.
Let denote the inexact errors $e^{k+1}_{x,i}:=x^{k+1}_i-\widehat x^{k+1}_i$, $e^{k+1}_z:=z^{k+1}-\widehat z^{k+1}$.

By blockwise strong convexity, each univariate subproblem
$\psi_i(x_i):=L(x^{k+1}_{1:i-1},x_i,x^k_{i+1:n},z^k,w^k)$ is $\mu_f$-strongly convex for some $\mu_f>0$, hence quadratic growth yields
\begin{equation}\label{eq:quad-growth-x}
 \psi_i(x^{k+1}_i)-\psi_i(\widehat x^{k+1}_i)\ge \tfrac{\mu_f}{2}\|e^{k+1}_{x,i}\|^2.   
\end{equation}

Combining with \cref{eq:e1-x} gives $\|e^{k+1}_{x,i}\|^2\le \tfrac{2}{\mu_f}\epsilon^{(i)}_k$.
Likewise, strong convexity in $z$ gives
$\|e^{k+1}_z\|^2\le \tfrac{2}{\mu_\phi}\eta_k$ for some $\mu_\phi>0$.

We derive a blockwise descent inequality for the inexact updates $(x^{k+1}_i,z^{k+1})$,
by comparing them with the exact block minimizers
$\widehat x^{k+1}_i\in\arg\min_{x_i} L(x^{k+1}_{1:i-1},x_i,x^k_{i+1:n},z^k,w^k)$
and $\widehat z^{k+1}\in\arg\min_{z} L(x^{k+1},z,w^k)$.

Fix $i\in\{1,\dots,n\}$ and define the univariate subproblem
\[
\psi_i(u):=L(x^{k+1}_{1:i-1},u,x^k_{i+1:n},z^k,w^k).
\]
% By Assumption~2.3, $\psi_i$ is $\mu_f$-strongly convex ($\mu_f>0$). Therefore the quadratic growth
% yields, for the inexact/exact pair $(x^{k+1}_i,\widehat x^{k+1}_i)$,
% \begin{equation}\label{eq:quad-growth-x}
% \psi_i(x^{k+1}_i)-\psi_i(\widehat x^{k+1}_i)\ \ge\ \frac{\mu_f}{2}\,\|x^{k+1}_i-\widehat x^{k+1}_i\|^2.
% \end{equation}
% {\color{blue} you had this in the above. }
Under \cref{eq:e1-z}, we also have
\begin{equation}\label{eq:value-subopt-x}
\psi_i(x^{k+1}_i)\ \le\ \min_{u}\psi_i(u)+\epsilon^{(i)}_k\ =\ \psi_i(\widehat x^{k+1}_i)+\epsilon^{(i)}_k.
\end{equation}
Combining \cref{eq:quad-growth-x} and \cref{eq:value-subopt-x} gives
\begin{equation}\label{eq:distance-x-exact}
\|x^{k+1}_i-\widehat x^{k+1}_i\|^2\ \le\ \frac{2}{\mu_f}\,\epsilon^{(i)}_k.
\end{equation}

Now compare the \emph{values} across one $x$-block update.
Then,
\begin{align}
&L(x^{k+1}_{1:i},x^k_{i+1:n},z^k,w^k)-L(x^{k+1}_{1:i-1},x^k_i,x^k_{i+1:n},z^k,w^k)\nonumber\\
&=\ \psi_i(x^{k+1}_i)-\psi_i(x^k_i)\nonumber\\
&\le\ \big(\psi_i(\widehat x^{k+1}_i)-\psi_i(x^k_i)\big)
\ +\ \big(\psi_i(x^{k+1}_i)-\psi_i(\widehat x^{k+1}_i)\big)\\
&\le\ -\,\frac{\mu_f}{2}\,\|\widehat x^{k+1}_i-x^k_i\|^2\ +\ \epsilon^{(i)}_k,\label{eq:per-block-descent-exact}
\end{align}
where the last line uses the exact-case one-block descent.

To express the decrease in terms of the \emph{inexact} step size $\|x^{k+1}_i-x^k_i\|$, we have
\[
\|\widehat x^{k+1}_i-x^k_i\|^2 \ge \tfrac{1}{2}\|x^{k+1}_i-x^k_i\|^2\ -\ \|x^{k+1}_i-\widehat x^{k+1}_i\|^2
\ \ge\ \tfrac{1}{2}\|x^{k+1}_i-x^k_i\|^2\ -\ \tfrac{2}{\mu_f}\,\epsilon^{(i)}_k.
\]
Plugging this into \cref{eq:per-block-descent-exact} we obtain
\begin{equation}\label{eq:per-block-descent-inexact}
L(x^{k+1}_{1:i},x^k_{i+1:n},z^k,w^k)-L(x^{k+1}_{1:i-1},x^k_i,x^k_{i+1:n},z^k,w^k)
\ \le\ -\,\frac{\mu_f}{4}\,\|x^{k+1}_i-x^k_i\|^2\ +\ 2\epsilon^{(i)}_k.
\end{equation}
Following the same process for block $z$ yields
\[
L(x^{k+1},z^{k+1},w^k)-L(x^{k+1},z^k,w^k)
\ \le\ -\,\frac{\mu_\phi}{2}\,\|\widehat z^{k+1}-z^k\|^2\ +\ \eta_k,
\]
and
\[
\|z^{k+1}-\hat{z}^{k+1}\|^2\leq\frac{2}{\mu_\phi}\eta_k.
\]
for some $\beta>0$ from the exact-case $z$-block descent. As in the $x$-block,
\[
\|\widehat z^{k+1}-z^k\|^2\geq \tfrac{1}{2}\|z^{k+1}-z^k\|^2-\|z^{k+1}-\hat{z}^{k+1}\|^2
\ \ge\ \tfrac{1}{2}\|z^{k+1}-z^k\|^2\ -\ \tfrac{2}{\mu_\phi}\,\eta_k,
\]
hence
\begin{equation}\label{eq:z-block-descent-inexact}
L(x^{k+1},z^{k+1},w^k)-L(x^{k+1},z^k,w^k)
\ \le\ -\,\frac{\mu_\phi}{4}\,\|z^{k+1}-z^k\|^2\ +\ 2\eta_k.
\end{equation}

The dual update is $w^{k+1}=w^k+\rho\,(A(x^{k+1})+Qz^{k+1})$.
By linearity of $L$ in $w$,
\begin{equation}\label{eq:dual-diff}
\begin{aligned}
&L(x^{k+1},z^{k+1},w^{k+1})-L(x^{k+1},z^{k+1},w^k)
= \langle w^{k+1}-w^k,\ A(x^{k+1})+Qz^{k+1}\rangle\\
&= \rho\,\|A(x^{k+1})+Qz^{k+1}\|^2=\frac{1}{\rho}\|w^{k+1}-w^{k}\|^2.
\end{aligned}
\end{equation}

Furthermore, we have

\begin{equation}
\begin{aligned}\label{w_iterates}
    \|w^{k+1}-w^{k}\|^2&\leq\frac{\|Q^Tw^{k+1}-Q^Tw^{k}\|}{\lambda_{min}^+(Q^TQ)},\\
        &\leq\frac{\|\nabla_z\phi(z^{k+1})-\nabla_zL(x^{k+1},z^{k+1},w^{k})-\nabla_z\phi(z^k)+\nabla_zL(x^{k},z^{k},w^{k-1})\|^2}{\lambda_{min}^+(Q^TQ)},\\
        &\leq\frac{3\|\nabla_z\phi(z^{k+1})-\nabla_z\phi(z^k)\|^2+3\|\nabla_zL(x^{k+1},z^{k+1},w^{k})\|^2+3\|\nabla_zL(x^{k},z^{k},w^{k-1})\|^2}{\lambda_{min}^+(Q^TQ)},\\
        &\leq\frac{3L_\phi^2}{\lambda_{min}^+(Q^TQ)}\|z^{k+1}-z^k\|^2+\frac{6(L_\phi+\lambda_{max}(Q^TQ))}{\lambda_{min}^+(Q^TQ)}(\eta_k+\eta_{k-1})
\end{aligned}
\end{equation}

As a result, we get
\begin{equation}
\begin{aligned}\label{eq:approximate_iterate}
    &L(x^{k},z^{k},w^{k})-L(x^{k+1},z^{k+1},w^{k+1})\\
    &\geq\frac{\mu_f}{4}\|x^{k+1}-x^{k}\|^2-2\sum_{i}\epsilon_k^{(i)}+\frac{\mu_\phi}{4}\|z^{k+1}-z^k\|^2-2\eta_k-\frac{1}{\rho}\|w^{k+1}-w^k\|^2\\
    &\geq \frac{\mu_f}{4}\|x^{k+1}-x^{k}\|^2-2\sum_{i}\epsilon_k^{(i)}+(\frac{\mu_\phi}{4}-\frac{3L_\phi^2}{\rho\lambda_{min}^+(Q^TQ)})\|z^{k+1}-z^k\|^2-2\eta_k\\
    &\quad -\frac{6(L_\phi+\lambda_{max}(Q^TQ))}{\rho\lambda_{min}^+(Q^TQ)}(\eta_k+\eta_{k-1})\\
    &\geq \frac{\mu_f}{4}\|x^{k+1}-x^{k}\|^2-2\sum_{i}\epsilon_k^{(i)}+\frac{\mu_\phi}{8}\|z^{k+1}-z^k\|^2-2\eta_k-\frac{6(L_\phi+\lambda_{max}(Q^TQ))}{\rho\lambda_{min}^+(Q^TQ)}(\eta_k+\eta_{k-1})\\
    &\geq \frac{\mu_f}{4}\|x^{k+1}-x^{k}\|^2+\frac{\mu_\phi}{16}\|z^{k+1}-z^k\|^2+\frac{\mu_\phi \lambda_{min}^+(Q^TQ)}{48L_\phi^2}\|w^{k+1}-w^k\|^2-M(\sum_{i}\epsilon_k^{(i)}+\eta_k+\eta_{k-1}).
\end{aligned}
\end{equation}
where we apply \cref{w_iterates} in the third and the last inequalities, $M=2+\frac{6(L_\phi+\lambda_{max}(Q^TQ))}{\rho\lambda_{min}^+(Q^TQ)}$ is a constant and $\rho$ is large enough.

Summing \cref{eq:approximate_iterate} from $k=0$ to $K$ and telescoping yields
\begin{align*}
&L(x^0,z^0,w^0)-L(x^{K+1},z^{K+1},w^{K+1})+2M\sum_{k=0}^K(\sum_{i}\epsilon_k^{(i)}+\eta_k)\\
&\ge \!\sum_{k=0}^K \Big[\frac{\mu_f}{4}\|x^{k+1}-x^{k}\|^2+\frac{\mu_\phi}{16}\|z^{k+1}-z^k\|^2+\frac{\mu_\phi \lambda_{min}^+(Q^TQ)}{48L_\phi^2}\|w^{k+1}-w^k\|^2 \Big]   
\end{align*}

By Assumption~2.3 and the quadratic penalty, $L(x^{K+1},z^{K+1},w^{K+1})$ is bounded below.
While $\sum_{k=0}^K(\sum_{i}\epsilon_k^{(i)}+\eta_k)<\infty$ by hypothesis. Hence the nonnegative series
\[
\sum_{k=0}^\infty\!\Big[\sum_{i=1}^n \|x^{k+1}_i - x^k_i\|^2
+ \|z^{k+1}-z^k\|^2
+ \|w^{k+1}-w^k\|^2 \Big] <\infty,
\]
which implies $\|x^{k+1}-x^k\|\to 0$, $\|z^{k+1}-z^k\|\to 0$, and $\|w^{k+1}-w^k\|\to 0$.
The blockwise optimality follows as in \cref{theo_sublinear}.

If further $\sum_{i}\epsilon_k^{(i)}+\eta_k\leq C[\|x^{k+1}-x^{k}\|^2+\|z^{k+1}-z^k\|^2+\|w^{k+1}-w^k\|^2]$, where 
$$
C<\frac{\min\{\frac{\mu_f}{4},\frac{\mu_\phi}{16},\frac{\mu_\phi\lambda_{min}^+(Q^TQ)}{48L_\phi^2}\}}{2M},
$$
then
\begin{equation}
\begin{aligned}
    &L(x^{0},z^{0},w^{0})-L(x^{K+1},z^{K+1},w^{K+1})\\
    &\geq \sum_{k=0}^K[\frac{\mu_f}{4}\|x^{k+1}-x^{k}\|^2+\frac{\mu_\phi}{16}\|z^{k+1}-z^k\|^2+\frac{\mu_\phi \lambda_{min}^+(Q^TQ)}{48L_\phi^2}\|w^{k+1}-w^k\|^2-2M(\sum_{i}\epsilon_k^{(i)}+\eta_k)],\\
    &\geq  \sum_{k=0}^KC'(\|x^{k+1}-x^{k}\|^2+\|z^{k+1}-z^k\|^2+\|w^{k+1}-w^k\|^2)
\end{aligned}
\end{equation}

where $C'>0$. The rest follows the proof of \cref{theo_sublinear}, \cref{theo_linear} and \cref{theo_linear_nondiff}.
\end{proof}

% \begin{corollary}
% {\color{teal} Since the subproblems are strongly convex, assuming that one uses GD to find the approximated solution or other methods, please also specify the required assumption (e.g., $\sum_k(\sum_i \epsilon^{(i)}_k+\eta_k)<\infty$) in terms of the number of iterations as a corollary.} What I can prove is that the number of the iterations is related to $t$.
% \end{corollary}

\section{Proofs of Section \ref{sec:robotics}}\label{app:proofs_2}
\begin{corollary}\label{theo_w_0_robotic}
     Under the assumptions of \cref{theo_robotic_sublinear}, if the iterates of the ADMM applied to the problem in \ref{robotic} converge to $(x^\star,z^\star,w^\star)$ with $w^\star=0$, then the Lagrangian converge linearly, i.e., there exists $c_4>1$ such that
    \begin{equation*}
        L(x^k,z^k,w^k)-L(x^\star,z^\star,w^\star)\leq\mathcal{O}(c_4^{-k}),
    \end{equation*}
\end{corollary}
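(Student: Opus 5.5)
The statement to prove is Corollary \ref{theo_w_0_robotic}. The plan is to reduce it to Corollary \ref{theo_w_0}, exactly as Corollary \ref{theo_robotic_sublinear} reduces to Theorem \ref{theo_sublinear}.

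\textbf{Step 1: check that \eqref{robotic} is a valid instance of \eqref{problem}.} I will verify the standing assumptions one at a time.
\begin{itemize}
\item Set $z=\mathbf{k'}$ and $x_i=[\mathbf{f}_i^1,\dots,\mathbf{f}_i^N]$.
\item The constraint map has the form $A(x)+Qz=0$ with $Q=-I$. This $Q$ has full row rank, so Assumption \ref{ass_A_inside_Q} holds, and $\lambda_{\min}(QQ^T)=\lambda^+_{\min}(Q^TQ)=1$.
\item $A$ is multi-affine quadratic. Each angular momentum increment is a sum of cross products $(\mathbf{r}_i^j-\mathbf{c}_i(\mathbf{f}))\times\mathbf{f}_i^j$. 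Here $\mathbf{c}_i$ is affine in $\mathbf{f}_{0:i-2}$ only, so every bilinear term couples a block $\mathbf{f}_{i'}$ with a different block $\mathbf{f}_i$ ($i'\le i-2$). Hence the diagonal blocks of each $C_i$ vanish, and $A$ is affine in each block when the others are fixed.
\item Assumption \ref{ass_strongly_convex} and the $L_\phi$-smoothness of $\phi$ are among the hypotheses of Corollary \ref{theo_robotic_sublinear}. The $I_i$ are indicators of closed convex (polyhedral) friction-cone approximations, so $F$ is subanalytic.
\end{itemize}
With these checks done, Theorem \ref{theo_sublinear} applies for $\rho\ge\max\{4L_\phi^2/\mu_\phi,\,4L_\phi^2/\mu_\phi\}$. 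It yields descent \eqref{L_diff_square}, boundedness of the iterates, the subgradient bound \eqref{subgraident_lipschitz_bound}, and convergence to a stationary point $(x^\star,z^\star,w^\star)$.

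\textbf{Step 2: use $w^\star=0$ to make the reduced Hessian positive definite.} When $w^\star=0$, the matrix \eqref{matrix_for_nondegenrate} reduces to
\[
\nabla^2 f(x^\star)+J^T\big(Q(\nabla^2\phi(z^\star))^{-1}Q^T\big)^{-1}J ,
\]
where $J$ stacks the rows $(C_ix^\star+d_i)^T$. With $Q=-I$ the middle factor equals $\nabla^2\phi(z^\star)\succ0$. The first term is $\succeq\mu_f I$, so the whole matrix is $\succeq \mu_f I$. No smallness condition on $\Delta t$ or $\|C\|$ is needed here. This is precisely why the corollary asks for $w^\star=0$ instead of \eqref{Q_condition}.

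\textbf{Step 3: obtain the local PL inequality with $\alpha=2$.} By the Schur complement argument in the proof of Theorem \ref{theo_linear}, the full Hessian of $L$ at the limit point is invertible. Lemma \ref{clarke_inverse} then makes the stationary point isolated, so $L$ is Morse--Bott there. Theorem \ref{feehan2} (equivalently Theorem \ref{thm:extra_pl}) gives $\|\nabla L\|^2\ge C|L-L^\star|$ on a ball around the limit.

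\textbf{Step 4: conclude linear convergence.} Since the iterates converge, they eventually enter this ball. Combining the PL inequality with \eqref{L_iteration_difference} gives the contraction
\[
L^{k+1}-L^\star\le (1+aC/b^2)^{-1}(L^k-L^\star).
\]
Setting $c_4=1+aC/b^2$ yields the claimed rate.

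\textbf{Main obstacle.} The delicate point is differentiability. The Morse--Bott route needs $L$ to be $C^2$ near the limit, which means the indicators must vanish on a neighbourhood, so $x^\star$ must be interior to the friction cones. Corollary \ref{theo_w_0} implicitly carries the same requirement. If $x^\star$ lies on a face of a cone, I would instead follow the proof of Theorem \ref{theo_linear_nondiff}:
\begin{itemize}
\item Positive definiteness of the $(x,z)$-Hessian of $\tilde L$ from Step 2 gives local strong convexity in $(x,z)$, uniformly for $w$ near $0$.
\item Polyhedrality of the cone approximations, via \cite{karimi2016linear}, then gives the PL inequality relative to the local minimum.
\end{itemize}
In that case I would state the conclusion in the weaker localized form of Theorem \ref{theo_linear_nondiff}.
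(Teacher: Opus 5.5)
Your proposal is correct and follows the paper's route. The paper proves this corollary by noting that \eqref{robotic} is an instance of \eqref{problem} with $Q=\pm I$ and then invoking Corollary~\ref{theo_w_0}, whose proof is exactly your Steps 2--4: with $w^\star=0$ the matrix \eqref{matrix_for_nondegenrate} is $\succeq \mu_f I$, so the Hessian is invertible, the Morse--Bott/PL argument of Theorem~\ref{theo_linear} gives the contraction, and no condition on $\Delta t$ or $\|C\|$ is needed. The differentiability caveat you flag is genuine, and the paper shares it. Its proof of Corollary~\ref{theo_w_0} silently uses the second-order differentiability hypothesis (limit point interior to the constraint sets) from Theorem~\ref{theo_linear}, even though that hypothesis is not stated in the corollary.
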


\begin{corollary}\label{theo_linear_nodiff_robotic}
    Under the assumptions of \cref{theo_linear_robotic}, if $I_i$s are the indicator functions of some polyhedrals, 
    %if \cref{x_f_star_bounded} hold and $\Delta t$ satisfies 
   %  \begin{equation*}
   %  \begin{aligned}
   %  \Delta t&=\mathcal{O}\left(\min\{\frac{\mu_f m}{N^3T_{total}^{5/2}(L_f+L_\phi)}, \frac{\mu_f^{3/5} m^{2/5}}{NT_{total}^{2}L_\phi^{2/5}(L_f^{1/2}+L_\phi^{1/2})^{2/5}}\right.,\\
   % &\left. \frac{\mu_f^{4/9}m^{1/3}}{N^{5/3}T_{total}^{5/3} L_\phi^{1/3}},\frac{\mu_f^{1/2} m^{1/3}}{N^{2}T_{total}^{5/2}L_\phi^{1/2}}\}\right).    
   %  \end{aligned}
   %  \end{equation*}
then, 
\begin{equation*}
    L(x^k,z^k,w^k)-\min_{(x,z)\in \mathcal{B}(x^k,z^k;r)} L(x,z,w^k)\in\mathcal{O}(c_5^{-k}),
\end{equation*}
where $c_5>1$ and $r>0$. Furthermore, $(x^\star,z^\star)$ is the local minimum of problem \cref{robotic}.
\end{corollary}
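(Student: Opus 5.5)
The statement to prove is \cref{theo_linear_nodiff_robotic}. It is the robotic analogue of \cref{theo_linear_nondiff}, so the plan is to reduce it to that theorem. Concretely, I will check that problem \eqref{robotic} satisfies all hypotheses of \cref{theo_linear_nondiff} once $\Delta t$ is small enough. The structural hypotheses transfer directly:
\begin{itemize}
\item \cref{ass_strongly_convex} holds with $F(x)=f(\mathbf f)+\sum_i I_i(\mathbf f_i)$ and $\phi(\mathbf k')$.
\item \cref{ass_A_inside_Q} holds because the constraint matrix on $z=[\mathbf k'_0,\dots,\mathbf k'_T]$ is the identity, so $Q=I$ (up to sign), which is full row rank with $\lambda_{min}(QQ^T)=1$ and $\|(QQ^T)^{-1}Q\|=1$.
\item The $I_i$ are polyhedral by assumption, since the friction cones are replaced by polyhedral approximations.
\item The choice of $\rho$ is inherited from \cref{theo_robotic_sublinear}, which already gives convergence of the iterates to a stationary point $(x^\star,z^\star,w^\star)$.
\end{itemize}

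The only nontrivial item is condition \eqref{Q_condition}, or rather its explicit form \eqref{Q_condition_proof0}. With $Q=I$ this reduces to
\[
\|C\|\in\mathcal O\big(\min\{m_1,m_2,m_3\}\big).
\]
I will read off $C_i$, $d_i$, $e_i$ from \eqref{robotic}. The bilinear terms come only from the products $\big(\sum_{i'}(i-1-i')\mathbf f_{i'}^l/m\,(\Delta t)^2\big)\times \mathbf f_i^j\,\Delta t$. Each entry of $C_i$ is therefore $\mathcal O((\Delta t)^3 T/m)$, and summing over contacts and time gives a bound such as $\|C\|=\mathcal O(N T^{2}(\Delta t)^3/m)$. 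The linear coefficients are $\|d\|=\mathcal O(N(\|\mathbf r\|+\|\mathbf c_{init}\|+T\Delta t\|\dot{\mathbf c}_{init}\|+T^2(\Delta t)^2\|\mathbf g\|)\Delta t)$ and $\|e\|=\mathcal O(\|\mathbf k_{init}\|)$. Both stay bounded, and in fact shrink, as $\Delta t\to0$ for a fixed physical horizon $T_{total}=T\Delta t$.

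Next I bound the constants $m_1,m_2,m_3$ from below. Using \eqref{x_f_star_bounded}, the quantities $\|x^0\|,\|x_f^\star\|,\|z_\phi^\star\|$ grow at most like $\sqrt{n_x},\sqrt{n_z}=\mathcal O(\sqrt{T})$, and $n_c=\mathcal O(T)$. Hence $m_1,m_2,m_3$ decay at most polynomially in $T=T_{total}/\Delta t$, that is, like $(\Delta t)^{p}$ for some fixed $p$.

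The main obstacle is this power counting. I must show that $\|C\|\sim(\Delta t)^{3}T^2=(\Delta t)T_{total}^2$ decays faster than the polynomial decay of $\min\{m_1,m_2,m_3\}$ in $\Delta t$. This requires care with the $\sqrt{n_c}$, $n_c$ and $\|x^0\|^2\sim T$ factors in $m_3$. If the crude count fails, I would first try a sharper bound on $\|\sum_i w_i^\star C_i\|$ that exploits the block-banded structure of the $C_i$: each $C_i$ couples only $\mathbf f_i$ with earlier forces. I would also try to sharpen the $w^\star$ estimate \eqref{w_star_bound} directly, using $w^\star=-\nabla\phi(z^\star)$ when $Q=I$.

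Once some threshold $t_0$ is found with \eqref{Q_condition_proof0} holding for $\Delta t\le t_0$, the proof of \cref{theo_linear_nondiff} applies verbatim:
\begin{itemize}
\item $\nabla^2 f(x^\star)+\sum_i w_i^\star C_i\succ0$, so the smooth part $\tilde L$ is locally strongly convex in $(x,z)$.
\item Because the domain is polyhedral, the result of \cite{karimi2016linear} yields a 2-PL inequality.
\item Combining it with the descent and subgradient bounds \eqref{eq:app_proof_4} gives the linear rate with some $c_5>1$ and radius $r$, together with the local-minimum conclusion.
\end{itemize}
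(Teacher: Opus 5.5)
Your reduction is the one the paper uses. With $Q=I$, the condition in equation~\ref{Q_condition_proof0} becomes $\|C\|\in\mathcal O(\min\{m_1,m_2,m_3\})$, and once this holds for all $\Delta t\le t_0$, the proof of \cref{theo_linear_nondiff} goes through unchanged.

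\textbf{The gap.} The only substantive step, the power counting that produces $t_0$, is left conditional, and with the bound you actually propose it fails. From equation~\ref{x_f_star_bounded} and $n_c=\Theta(T)$ you only get $\|x^0\|,\|x_f^\star\|,\|z_\phi^\star\|=\mathcal O(\sqrt T)$. So $m_1$ and $m_3$ can only be bounded below by a constant times $1/T=\Delta t/T_{total}$, and the same then holds for $\min\{m_1,m_2,m_3\}$. Your estimate $\|C\|=\mathcal O(NT^2(\Delta t)^3/m)=\mathcal O(NT_{total}^2\Delta t/m)$ is of the same order $\Delta t$. Hence the ratio $\|C\|/\min_i m_i$ does not tend to zero as $\Delta t\to0$. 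The requirement becomes a condition on $m$, $N$, $T_{total}$ and the curvature constants rather than a threshold on $\Delta t$, and no $t_0$ is produced.

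\textbf{The missing ingredient.} You need a sharper bound on $\|C_i\|$. The paper uses the Frobenius norm instead of the triangle inequality over the $\mathcal O(T)$ rank-two pieces. For fixed $(j,l)$, the matrix $\sum_{i'=0}^{i-2}C^{j,l}_{i',x_1,i,x_2}$ has, for each $i'$, four nonzero entries of magnitude $i-1-i'$, in disjoint positions. Its spectral norm is therefore at most $\big(4\sum_{i'}(i-1-i')^2\big)^{1/2}\le 2i^{3/2}$. This gives
$\|C\|=\mathcal O(N^2T^{3/2}(\Delta t)^3/m)=\mathcal O(N^2T_{total}^{3/2}(\Delta t)^{3/2}/m)=o(\Delta t)$.
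So equation~\ref{Q_condition_proof0} holds for all sufficiently small $\Delta t$, with the explicit threshold recorded in equation~\ref{conclusion}.

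An equivalent route: write each row of the bilinear part as a component of $\frac{(\Delta t)^3}{m}\,u\times v$, with $u=\sum_{i'}(i-1-i')\sum_l\mathbf f_{i'}^l$ and $v=\sum_j\mathbf f_i^j$. Then use $\|u\|\le\sqrt N\,T^{3/2}\|\mathbf f\|$ and $\|v\|\le\sqrt N\|\mathbf f\|$.

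Your fallback of exploiting the banded structure points in this direction, but it has to be carried out. The extra factor $\sqrt T=\sqrt{T_{total}/\Delta t}$ is exactly what separates a $\Delta t$-independent requirement from one that holds for small $\Delta t$.
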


\subsection*{Proofs of \cref{theo_robotic_sublinear}, \cref{theo_w_0_robotic}, \cref{theo_linear_robotic} and \cref{theo_linear_nodiff_robotic}}
In this setting, the corresponding objective functions will be
% \begin{equation}
% \begin{aligned}
%     &\min_{x,z} F(x)+\phi(z),\\
%     &A(x)+Qz=0.  
% \end{aligned}
% \end{equation}
\begin{equation*}
\begin{aligned}
    &F(x):=f(x)+\sum_{i=0}^TI_i(x_i),\quad \phi(z):=\sum_{i=1}^{T} \phi(k'),\\
    &f(x):=f(\mathbf{f}),\quad I_i(x_i):=\sum_{j=1}^NI_{W}(\mathbf{f}_i^j).
\end{aligned}
\end{equation*} 
The corresponding multi-affine quadratic constraints of the locomotion problem is $A(x)+Qz=0$ in which $Q$ is the identity matrix and 
%$(A(x))^T:=[\mathbf{0}^T, (A_1(\mathbf{f}))^T,...,(A^T(\mathbf{f}))^T]$, 
\begin{equation*}
    A(x):=\begin{bmatrix}\mathbf{k}_{init}\\
    A^1(\mathbf{f})\\
\vdots\\
    A^T(\mathbf{f})\end{bmatrix},
%     \quad Qz=\begin{bmatrix}
% \mathbf{k}'_0\\
% \mathbf{k}'_1\\
% \vdots\\
% \mathbf{k}'_T
%     \end{bmatrix},
\end{equation*}
% where for $i\geq0$, 
% \begin{equation*}
%     A(x)=\begin{bmatrix}\mathbf{0}\\
%     A^1(\mathbf{f})\\
%     A^2(\mathbf{f})\\
%     A^3(\mathbf{f})\\
%     \cdots\\
%     A^T(\mathbf{f})\end{bmatrix},\quad Q(z)=\begin{bmatrix}
% \mathbf{k}'_0\\
% \mathbf{k}'_1\\
% \mathbf{k}'_2\\
% \mathbf{k}'_3\\
% \cdots\\
% \mathbf{k'}_T
%     \end{bmatrix},
% \end{equation*}
% where 
% \begin{equation*}
%     A^1(\mathbf{f})=\sum_{j=1}^N n_0^j\left(\left(\mathbf{r}_0^j-\mathbf{c}_{\text {init }}\right) \times \mathbf{f}_0^j\right) \Delta t,
% \end{equation*}
% \begin{equation*}
%     A^2(\mathbf{f})=\sum_{j=1}^N n_1^j\left(\left(\mathbf{r}_1^j-\mathbf{c}_{\text {init }}-\dot{\mathbf{c}}_{\text {init }}\Delta t\right) \times \mathbf{f}_1^j\right) \Delta t,
% \end{equation*}
in which
\begin{align*}
    A^i(\mathbf{f})&:=\sum_{j=1}^N \left(\left(\mathbf{r}_i^j-\mathbf{c}_{\text {init }}-\dot{\mathbf{c}}_{\text {init }}i(\Delta t)-\sum_{i'=0}^{i-2}(i-1-i')(\sum_{l=1}^N \frac{\mathbf{f}_{i'}^l}{m}+\mathbf{g})(\Delta t)^2\right) \times \mathbf{f}_i^j\right) \Delta t,\ \textrm{for}\ i\geq 2.    
\end{align*}
From the robotic problem in \cref{robotic}, we have $C_i=0$ in $A_2^1$ and $A_2^2$. For $A_2^i$, $i\geq 3$, $C_i$ is actually a sparse matrix. If we consider $r_j^t$ as a constant, then
\begin{small}
\begin{equation}
\begin{aligned}
\mathbf{k'}_{i+1}=\mathbf{k}_{i+1}-\mathbf{k}_{i}&=\sum_{j=1}^N \left(\left(\mathbf{r}_i^j-\mathbf{c}_{\text {init }}-\dot{\mathbf{c}}_{\text {init }}i(\Delta t)-\sum_{i'=0}^{i-2}(i-1-i')(\sum_{l=1}^N  \frac{\mathbf{f}_{i'}^l}{m}+\mathbf{g})(\Delta t)^2\right) \times \mathbf{f}_i^j\right) \Delta t\\
    &=\sum_{j=1}^N \left(\left(\mathbf{r}_i^j-\mathbf{c}_{\text {init }}-\dot{\mathbf{c}}_{\text {init }}i(\Delta t)-\sum_{i'=0}^{i-2}(i-1-i')\mathbf{g}(\Delta t)^2\right) \times \mathbf{f}_i^j\right) \Delta t\\
    &-\sum_{j=1}^N \sum_{i'=0}^{i-2}\sum_{l=1}^N (i-1-i')\frac{\mathbf{f}_{i'}^l\times \mathbf{f}_i^j}{m}(\Delta t)^3\\
    &=\mathrm{Affine\ Terms\ on\ } \mathbf{X} -\sum_{j=1}^N \sum_{i'=0}^{i-2}\sum_{l=1}^N (i-1-i')\frac{\mathbf{f}_{i'}^l\times \mathbf{f}_i^j}{m}(\Delta t)^3\\
    &=\mathrm{Affine\ Terms\ on\ } \mathbf{X} -\sum_{j=1}^N \sum_{i'=0}^{i-2}\sum_{l=1}^N \frac{(\Delta t)^3}{m}(i-1-i')\begin{bmatrix}
        f_{i',y}^lf_{i,z}^j-f_{i',z}^lf_{i,y}^j \\
        -f_{i',x}^lf_{i,z}^j+f_{i',z}^lf_{i,x}^j \\
        f_{i',x}^lf_{i,y}^j-f_{i',y}^lf_{i,x}^j 
    \end{bmatrix}\\
    &=\mathrm{Affine\ Terms\ on\ } \mathbf{X} -\sum_{j=1}^N \sum_{l=1}^N \frac{(\Delta t)^3}{m}\begin{bmatrix}
        \frac{1}{2}\mathbf{f}^T\sum_{i'=0}^{t-2}C_{i',y,i,z}^{j,l} \mathbf{f}\\
        \frac{1}{2}\mathbf{f}^T\sum_{i'=0}^{t-2}C_{i',x,i,z}^{j,l} \mathbf{f}\\
        \frac{1}{2}\mathbf{f}^T\sum_{i'=0}^{t-2}C_{i',x,i,y}^{j,l} \mathbf{f}.
    \end{bmatrix}
\end{aligned}
\end{equation}
\end{small}
where $C_{i',y,i,z}^{j,l}$, $C_{i',x,i,z}^{j,l} $ and $C_{i',x,i,y}^{j,l}$ only have $4$ non-zero element with its number equals to $i-1-i'$ or $-(i-1-i')$ at $(i'_{x_1},i_{x_2})$, $(i_{x_2},i'_{x_1})$, $(i'_{x_2},i_{x_1})$ and $(i_{x_1},i'_{x_2})$, where $(x_1,x_2)\in\{(x,y),(x,z),(y,z)\}$. 
As a result
\begin{equation*}
\begin{aligned}
    &\|\sum_{i'=0}^{i-2}C_{i',x_1,i,x_2}^{j,l}\|\leq\|\sum_{i'=0}^{i-2}C_{i',x_1,i,x_2}^{j,l}\|_F
    =\sqrt{\sum_{i'=0}^{i-2}4(i-1-i')^2}\leq 2i^{3/2}.
\end{aligned}
\end{equation*}
In addition,
\begin{small}
\begin{equation*}
\begin{aligned}
    \|C_i\|&=\|\sum_{j=1}^N \sum_{i'=0}^{i-2}\sum_{l=1}^N \frac{(\Delta t)^3}{2m}C_{i',x_1,i,x_2}^{j,l}\|\leq\sum_{j=1}^N \sum_{l=1}^N \frac{(\Delta t)^3}{2m}\|\sum_{i'=0}^{i-2}C_{i',x_1,i,x_2}^{j,l}\|\\
    &\leq\sum_{j=1}^N \sum_{l=1}^N \frac{2i^{3/2}(\Delta t)^3}{2m}\leq\sum_{j=1}^N \sum_{l=1}^N \frac{2T^{3/2}(\Delta t)^3}{2m}\\
    &\leq\frac{N^22T^{3/2}(\Delta t)^3}{2m}\in\mathcal{O}\left(\frac{N^2T^{3/2}(\Delta t)^3}{m}\right).
\end{aligned}
\end{equation*}
\end{small}
and $C_i\in\mathbb{R}^{n\times n}$, where $n_x=Nn_z=Nn_c=3N(T+1)$.

If we can seeking a solution on the time interval $[0,T_{total}]$ and we split it into $T$ discretization, then
$
    T=\frac{T_{total}}{\Delta t}.
$
In consequence,
\begin{equation}\label{n_condition}
    n_x=Nn_z=Nn_c=3N\left(\frac{T_{total}}{\Delta t}+1\right)=\Theta\left(N\frac{T_{total}}{\Delta t}\right).
\end{equation}
and
\begin{equation}\label{C_bound}
 \|C\|=\max_i\|C_i\|\in\mathcal{O}\left(\frac{N^2(T_{total})^{3/2}(\Delta t)^{3/2}}{m}\right).  
\end{equation}
For $d_i$, by denoting $a_i^j=\mathbf{r}_i^j-\mathbf{c}_{\text {init }}-\dot{\mathbf{c}}_{\text {init }}i(\Delta t)-\sum_{i'=0}^{i-2}(i-1-i')\mathbf{g}(\Delta t)^2$, notice that,
\begin{small}
\begin{equation*}
\begin{aligned}
    &\sum_{j=1}^N \left(\left(\mathbf{r}_i^j-\mathbf{c}_{\text {init }}-\dot{\mathbf{c}}_{\text {init }}i(\Delta t)-\sum_{i'=0}^{i-2}(i-1-i')\mathbf{g}(\Delta t)^2\right) \times \mathbf{f}_i^j\right) \Delta t =\sum_{j=1}^N \left( a_i^j \times \mathbf{f}_i^j\right) \Delta t \\
    &=\sum_{j=1}^N \begin{bmatrix}
        a_{i,y}^jf_{i,z}^j-a_{i,z}^jf_{i,y}^j\\
        -a_{i,x}^jf_{i,z}^j+a_{i,z}^jf_{i,x}^j\\
        a_{i,x}^jf_{i,y}^j-a_{i,y}^jf_{i,x}^j\\
    \end{bmatrix}\Delta t
\end{aligned}
\end{equation*}
\end{small}
and
\begin{small}
\begin{equation*}
\begin{aligned}
    |a_{i,z}^j|&=|\mathbf{r}_{i,z}^j-\mathbf{c}_{\text {init },z}-\dot{\mathbf{c}}_{\text {init },z}t(\Delta t)-\sum_{i'=0}^{i-2}(i-1-i')\mathbf{g}_{z}(\Delta t)^2|  \\
    &\leq|\mathbf{r}_{i,z}^j|+|\mathbf{c}_{\text {init },z}|+|\dot{\mathbf{c}}_{\text {init },z}i(\Delta t)|+|\sum_{i'=0}^{i-2}(i-1-i')\mathbf{g}_{z}(\Delta t)^2|\\
    &\leq|\mathbf{r}_{i,z}^j|+|\mathbf{c}_{\text {init },z}|+|\dot{\mathbf{c}}_{\text {init },z}T(\Delta t)|+|\sum_{i'=0}^{T-2}(i-1-i')\mathbf{g}_{z}(\Delta t)^2|\\
    &\leq|\mathbf{r}_{i,z}^j|+|\mathbf{c}_{\text {init },z}|+|\dot{\mathbf{c}}_{\text {init },z}T_{total}|+|\frac{1}{2}\mathbf{g}_{z}T_{total}^2|\in\mathcal{O}(T_{total}^2).
\end{aligned}
\end{equation*}
\end{small}
The bound is same for $|a_{t,x}^j|$ and $|a_{t,y}^j|$. As a result, by choosing $x_1,x_2\in\{x,y,z\}$,
\begin{equation}\label{d_bound}
\begin{aligned}
\|d_i\|&\leq\sqrt{|a_{i,x_1}^j|^2+|a_{i,x_2}^j|^2}N\Delta t\ \in\mathcal{O}(N T_{total}^2 \Delta t).
\end{aligned}
\end{equation}
From \cref{n_condition}, \cref{C_bound}, \cref{d_bound}, \cref{x_f_star_bounded}, $\|e\|=0$ and $Q=I$, \cref{Q_condition_proof0} suffices to require
\begin{small}
\begin{equation*}
\begin{aligned}
   \frac{N^2(T_{total})^{3/2}(\Delta t)^{3/2}}{m}\in \mathcal{O}&\left(\min\Big\{\frac{\mu_f\sqrt{\mu_\phi}}{\sqrt{\frac{T_{total}}{\Delta t}}L_\phi\left(\sqrt{L_f}\sqrt{N\frac{T_{total}}{\Delta t}}+\sqrt{L_\phi}\sqrt{\frac{T_{total}}{\Delta t}}\right)},\right.\\
   &\left.\left.\frac{\mu_f\sqrt{\mu_\phi}}{\sqrt{L_\phi^3}\frac{T_{total}}{\Delta t}\sqrt{N\frac{T_{total}}{\Delta t}}N T_{total}^2 \Delta t}\right.\right.,\\
   &\left.\left(\frac{\mu_f\sqrt{\mu_\phi}}{L_\phi^{3/2}N\frac{T_{total}}{\Delta t}N\frac{T_{total}}{\Delta t}}\right)^{1/2}\Big\}\right),
\end{aligned}
\end{equation*}
\end{small}
which is equivalent to
\begin{small}
\begin{equation}\label{conclusion}
    \Delta t\in\mathcal{O}\left\{\frac{\mu_f^2\mu_\phi m^2}{L_\phi^2(L_f+L_\phi)N^6T_{total}^5},\frac{\mu_f\sqrt{\mu_\phi}m}{L_\phi^{3/2}N^{7/2}T_{total}^{5}},\frac{\mu_f\sqrt{\mu_\phi}m^2}{L_\phi^{3/2}N^6T_{total}^5}     \right\}.
\end{equation}
\end{small}
Once the \cref{conclusion} holds, the \cref{Q_condition} is satisfied, and the conclusion from the proof of \cref{theo_w_0}, \cref{theo_linear}, \cref{theo_linear_nondiff} follows.
\qed

\section{Additional Experiments Information}
The simulations are done on a normal laptop with Intel(R) Core(TM) i5-1235U with 16GB of memory.

In the 2D locomotion problem, the horizontal location of the end-effector $r_x$ is switched to $r_x+D$ after $M\Delta t$ time steps, where $D$ and $M$ can be chosen randomly for each step to increase the variability in the motion. The frictions are constrained so that the horizontal location of CoM $c_x$ satisfies $-0.15\mathrm{m}\leq c_x \leq 0.15\mathrm{m}$, and vertical location of CoM $c_z$ satisfies $0.15\mathrm{m}\leq c_z\leq 0.25\mathrm{m}$ distance from the stance foot to the CoM. All the other details can be found in the code in the supplementary material.

We further provide the computation time for the locomotion problem under different $\Delta t$. As $\Delta t$ becomes smaller, the dimension of the problem become larger, which requires more time for the computation.

\begin{table}[h!]
\centering
\caption{Computation time for different $\Delta t$}
\small
\begin{tabularx}{\columnwidth}{|c|X|X|X|X|X|X|}
\hline
$\Delta t$ & 0.05s & 0.02s & 0.01s & 0.005s & 0.002s & 0.001s \\ \hline
Computation time & 0.60s & 1.51s & 3.29s & 7.31s & 18.54s & 38.82s \\ \hline
\end{tabularx}
\end{table}

% Additionally, Fig. \ref{fig:2d_loco} confirms that the friction cone constraints are satisfied throughout the optimization process.
% \begin{figure*}[h]
%     \centering
%     % \subfigure{\includegraphics[width=0.3\textwidth]{figs/2d_loco.png}}
%     %\subfigure{\includegraphics[width=.5\linewidth, height=.25\linewidth]{locomotion_convergence.png}}
%     \includegraphics[width=.45\linewidth, height=.25\linewidth]{f_constraint.png}
%     \caption{ The result of friction and its friction cone.
%     }\label{fig:2d_loco}
% \end{figure*}

\section{Additional Experiments}

\paragraph{Sensitivity of $\rho$:}
Here, we provide additional experiments for the sensitivity analysis of the penalty parameter $\rho$. As illustrated in the left Figure of \ref{fig:sen:rho}, by increasing the penalty coefficient, the convergence rate decreases while the convergence rate remains linear. 

\paragraph{Approximated ADMM:} 
In this experiment, we ran the inexact ADMM, where the updates (i.e., the solutions to the subproblems) are computed using gradient descent (GD) with different numbers of iterations. For example, when the inner-loop iteration count is set to 10, each subproblem is solved using 10 steps of GD. The results are shown in the right panel of Figure \ref{fig:sen:rho}, where they are compared with the exact ADMM, in which the subproblems are solved analytically. 

\begin{figure}[h]
    \centering
    \includegraphics[width=.4\linewidth, height=.3\linewidth, trim=2.7cm 9.2cm 2.7cm 9.2cm, clip]{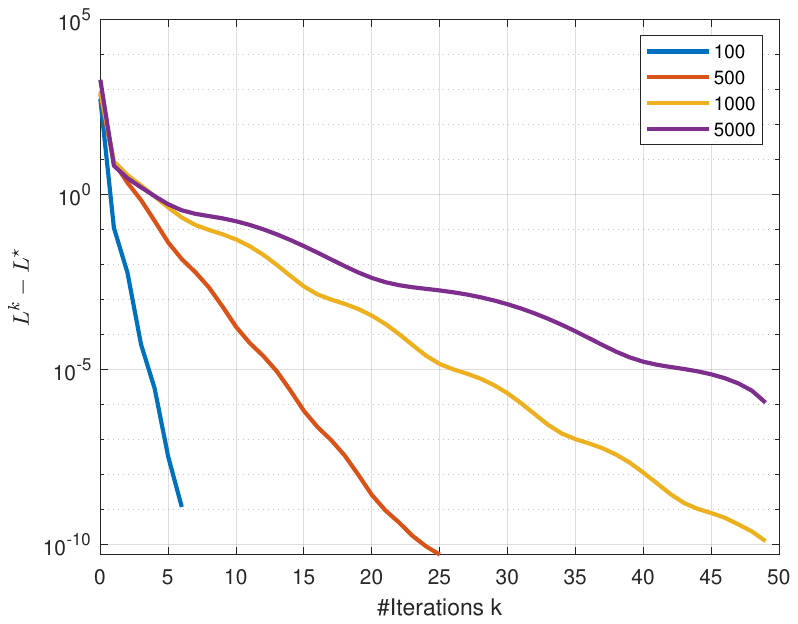}
    %\hspace{.3cm}
    \includegraphics[width=.4\linewidth, height=.3\linewidth, trim=3.5cm 9.4cm 3.5cm 9.6cm, clip]{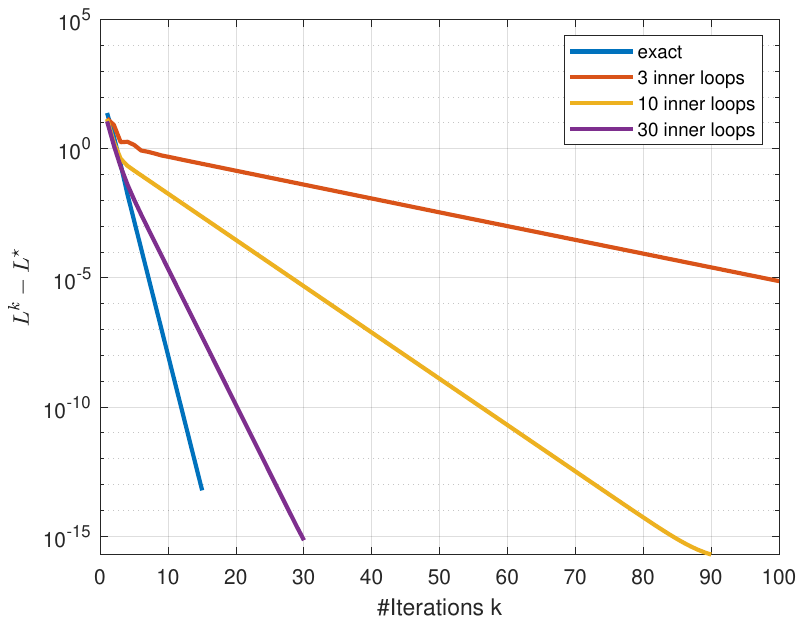}
    \caption{Left: sensitivity analysis of $\rho$. Right: Inexact ADMM with different numbers of GD in the inner loop} \label{fig:sen:rho}
\end{figure}

\paragraph{Comparison with other methods:} 
Here, we compare several benchmark methods with our algorithm on problems featuring different types of constraints: linear and multi-affine quadratic constraints. 
These figures contain additional method called IADMM from \cite{tang2024self}. 
As shown in Figure \ref{fig:comparison}, our ADMM algorithm achieves linear convergence in all settings.

%\epstopdfsetup{outdir=./}
\begin{figure}[h]
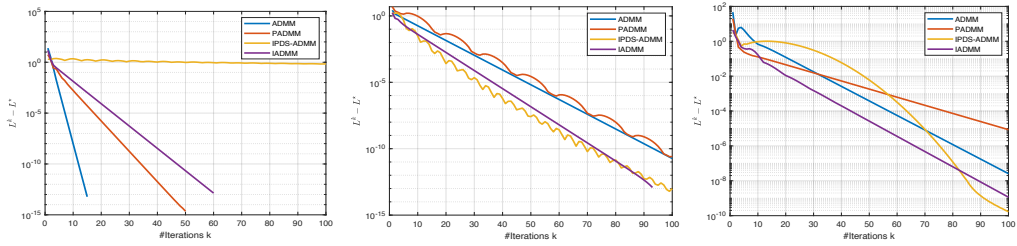

    \centering
    \includegraphics[width=.32\linewidth, height=.23\linewidth]{fig_1.pdf}
    %\hspace{.3cm}
    \includegraphics[width=.32\linewidth, height=.23\linewidth]{fig_2.pdf}
    \includegraphics[width=.32\linewidth, height=.23\linewidth]{fig_3.pdf}
    \caption{Left: convex objective with multi-affine constraint. Center: convex objective with linear constraint. Right: nonconvex objective with linear constraint} \label{fig:comparison}
\end{figure}

\end{document}